\newcommand{\R}{\ensuremath{\mathbb R}}
\newcommand{\Rp}{\ensuremath{\R^+}}
\newcommand{\F}{\ensuremath{{\cal F}}}
\newcommand{\B}{\ensuremath{{\cal B}}}
\newcommand{\Ss}{{\mathcal S}}
\newcommand{\N}{\ensuremath{{\mathbb N}}}
\newcommand{\Ll}{\ensuremath{{\mathbb L}}}
\newcommand{\No}{\ensuremath{{\mathbb N}\cup\{0\}}}
\newcommand{\FF}{\ensuremath{{\mathcal F}}}
\newcommand{\arrowd}{\mathop{\rightarrow}_{\cal D}}
\newcommand{\arrowp}{\mathop{\rightarrow}_{\cal P}}
\newcommand{\Ppw}{P\text{-a.s.}}
\newcommand{\Ddet}{\mathcal{{D}}}
\newcommand{\D}{\ensuremath{{D}(\Rp,\R)}}
\newcommand{\DD}{\ensuremath{{D}(\Rp,\R^{2})}}
\newcommand{\DDD}{\ensuremath{{D}(\Rp,\R^{3})}}
\newcommand{\DDDD}{\ensuremath{{D}(\Rp,\R^{4})}}
\newcommand{\DDDDD}{\ensuremath{{D}(\Rp,\R^{5})}}
\newcommand{\Dd}{\ensuremath{{D}(\Rp,\R^{d})}}
\newcommand{\Ddddd}{\ensuremath{{D}(\Rp,\R^{4d})}}
\newcommand{\Dddddd}{\ensuremath{{D}(\Rp,\R^{5d})}}
\newcommand{\Ddddddd}{\ensuremath{{D}(\Rp,\R^{6})}}
\newcommand{\Ddddddddd}{\ensuremath{{D}(\Rp,\R^{8})}}
\newcommand{\Dddddddd}{\ensuremath{{D}(\Rp,\R^{7})}}
\newcommand{\lra}{\longrightarrow}
\newtheorem{theorem}{\bf Theorem}[section]
\newtheorem{proposition}[theorem]{\bf Proposition}
\newtheorem{lemma}[theorem]{\bf Lemma}
\newtheorem{corollary}[theorem]{\bf Corollary}
\newcommand{\Mh}{\mu_h}
\newcommand{\Ch}{c_h^2}
\newcommand{\ch}{c_h^1}
\newcommand{\Lh}{\lambda_h}
\newcommand{\SPl}[1]{{\mathbb{SP}}_l(h,#1)}
\newcommand{\SPu}[1]{\mathbb{SP}^u(h,#1)}
\newcommand{\SP}[1]{\mathbb{SP}_l^u(h,#1)}
\newcommand{\SPp}[3]{\mathbb{SP}_{#2}^{#3}(h,#1)}
\theoremstyle{definition}
\newtheorem{definition}[theorem]{\bf Definition}
\newtheorem{example}[theorem]{\bf Example}
\newtheorem{remark}[theorem]{\bf Remark}
\numberwithin{equation}{section}
\begin{document}
\title{\bf Mean reflected stochastic differential equations with two constraints}
\author{Adrian Falkowski and Leszek S\l omi\'nski
\footnote{Corresponding author. E-mail address: leszeks@mat.umk.pl;
Tel.: +48-566112954; fax: +48-566112987.}\\
 \small Faculty of Mathematics and Computer Science,
Nicolaus Copernicus University,\\
\small ul. Chopina 12/18, 87-100 Toru\'n, Poland}
\date{}
\maketitle
\begin{abstract}
We study  the problem of the existence, uniqueness and stability
of solutions of reflected stochastic differential equations (SDEs)
with a minimality condition  depending on the law of the solution
(and not on the paths). We require that some functionals depending
on the law of the solution  lie between  two given  c\`adl\`ag
constraints. Applications to
investment models with constraints are given.
\end{abstract}
{\em Key Words}:
stochastic differential equations with constraints,  the Skorokhod problem,
 reflecting boundary condition.\\
{\em AMS 2000 Subject Classification}: Primary: 60H20; Secondary: 60G22.

\section{Introduction}

In this paper we consider  reflected SDEs of the form
\begin{equation}\label{eq1.1}
X_t = X_0 +\int_0^t f(s,X_{s-})\,dM_s+\int_0^tg(s,X_{s-})\, dV_s
+ k_t,\quad t\in\Rp,
\end{equation}
In (\ref{eq1.1}),   $f,g:\Rp\times\R\rightarrow\R$ are continuous
functions, $M$ is a martingale, $V$ is an adapted process of
bounded variation and  $k$ is a deterministic function which for
given two sided Lipschitz continuous function
$h:\Rp\times\R\rightarrow\R$ compensates reflections of the
functional of $X$ of the form $Eh(t,X_t)$, $t\in\Rp$,  on
c\`adl\`ag constraints $l$, $u$ such that $l_t\leq u_t$, $t\in\Rp$
(for a precise definition see Section 3).

Reflected SDEs were introduced by Skorokhod \cite{Sk} in the case
where $l=0$, $u=+\infty$,  $M$ is a standard Wiener process and
$V_t=t$, $t\in\Rp$.   The minimality condition in Skorokhod's
equation was depending on the paths of the solution $X$. This
implies that the compensating reflection part is a nondecreasing
stochastic process  and $X_t\geq 0$, $t\in\Rp$. Since the
pioneering Skorokhod's work reflected SDEs have been intensively
studied by many authors and his results were generalized to larger
classes of constraints  and larger classes of driving processes
(see, e.g.,
\cite{BKR,ChL,DN,KLRS,rutkowski:1980,Sl1,sl-wo/10,sl-wo/13,ta}).
In the all mentioned above papers the minimality condition, which
characterizes  the compensating reflections part,   depends  on
the paths of the solution $X$.

In recent papers by Briand, Elie and Hu \cite{briand:2018} and
Briand Ghannoum and Labart \cite{briand:2018a}  new type of
reflected SDEs was introduced. They were motivated by the mean
field game theory. In this new type of equations the compensating
reflection part depends not on the paths but on the law of the
solution $X$. In this case the compensating reflection part is a
nondecreasing function  and $EX_t\geq0$, $t\in\Rp$, or more
generally, $Eh(t,X_t)\geq0$, $t\in\Rp$,  for given two sided
Lipschitz continuous function $h:\Rp\times\R\rightarrow\R$. This
in fact means that the  mean minimality condition is considered.
A similar problem was  considered by Djehiche, Elie and Hamad\`ene
\cite{ha}. In the present paper, we generalize this type of
reflected SDEs to the case of equations of the form  (\ref{eq1.1})
with two c\`adl\`ag constraints  $l$, $u$ such that
\begin{equation}
\label{eq1.2}Eh(t,X_t)\in[l_t,u_t],\quad t\in\Rp,
\end{equation}
and the compensating reflection part $k$ is not necessarily of
bounded variation.

The paper is organized as follows.  In  Section 2, for given
c\`adl\`ag integrable  process $Y$ we consider the Skorokhod
problem for $Y$ with mean minimality condition. Its solution is a
pair $(X,k)$ such that
\begin{equation}
\label{eq1.3} X_t=Y_t+k_t,\quad t\in\Rp,
\end{equation}
and moreover, (\ref{eq1.2}) is satisfied and $k$ is a
deterministic  function not necessarily of bounded variation. We
observe that  $(X,k)$ is strictly connected with the solution of
appropriately defined classical deterministic Skorokhod problem.
Using this  we prove the existence and uniqueness of solutions of
(\ref{eq1.3}) and provide  an explicit   formula for the function
$k$. We also show  Lipschitz continuity of the mapping
$(Y,h,l,u)\mapsto(X,k)$. More precisely, we prove that if
$(X^i,k^i)$ is a solution associated with an integrable process
$Y^i$, function $h^i$ and barriers $l^i,u^i$, $i=1,2$, then there
exists $C>0$ such that for every $q\in\Rp$,
\[\sup_{t\leq q}|k^1_t-k^2_t|\leq
C\big(\parallel h^1-h^2\parallel_{[0,q]\times\R}
+ \sup_{t\leq q}E|Y^1_t-Y^2_t|+\sup_{t\leq q}
\max(|l^1_t-l^2_t|,|u^1_t-u^2_t|)\big)\]
and
\[E\sup_{t\leq q}|X^1_t-X^2_t|
\leq (C+1)E\sup_{t\le q}|Y^1_t-Y^2_t|+ C\big(\parallel
h^1-h^2\parallel_{[0,q]\times\R}+\sup_{t\leq
q}\max(|l^1_t-l^2_t|,|u^1_t-u^2_t|)\big),\] where $\parallel
h^1-h^2\parallel_{[0,q]\times\R}
=\sup_{(t,x)\in[0,q]\times\R}|h^1(t,x)-h^2(t,x)|$. We also give
results on the  Skorokhod problem with mean minimality condition
and one  barrier: lower (if $u=+\infty$) and upper (if
$l=-\infty$).

In Section 3, we study the general problem of stability of
solutions  to (\ref{eq1.3}) with respect to the convergence of
associated processes  and barriers. We give conditions ensuring
stability with respect to the convergence in law  and in
probability in the Skorokhod topology $J_{1}$. As an application,
we propose a practical scheme  of approximations of (\ref{eq1.3})
based on discrete approximations of the barriers $l,u$ and the
process $Y$.

Section 4 is devoted to the study  of weak and strong solutions of
(\ref{eq1.1}). We prove the existence and uniqueness of a strong
solution  of (\ref{eq1.1}) provided that $f,g$ satisfy the linear
growth condition and are  Lipschitz continuous and   show by
example that equations of the form (\ref{eq1.1}) can be useful in
the study of investment models with constraints. Moreover,  we
prove that the solution  of (\ref{eq1.1}) can be  approximated by
a discrete scheme constructed with the analogy to the Euler scheme
and  we prove its convergence in probability in the Skorokhod
topology $J_{1}$. We also show  some stability results for
solutions of the form
\begin{equation}\label{eq4.9}
X^n_t =  X^n_0 +\int_0^t f^n(s,X^n_{s-})\,d M^n_s
+\int_0^tg^n(s,X^n_{s-})\, dV^n_s + k^n_t,\quad t\in\Rp,
\end{equation}
under the assumption that the sequence of driving martingales
$\{M^n\}$ and processes with locally bounded variations $\{V^n\}$
satisfy the condition corresponding to the so-called condition
(UT) introduced by Stricker \cite{st} (see also \cite{jmp}). As a
consequence,  we prove existence of weak solution of the SDE
(\ref{eq1.1}) provided that  $f,g$ are continuous and satisfy the
linear growth condition. In  case (\ref{eq1.1}) has the weak
uniqueness property, we  formulate a theorem on  convergence of
solutions in  law  in the Skorokhod topology $J_{1}$.

In the paper, we will use the following notation.  $\Dd$ is the
space of c\`adl\`ag  mappings $x:\Rp\to\R^d$, i.e. mappings which
are right continuous and admit left-hands limits
 equipped
with the Skorokhod topology  $J_1$ (for the definition and many
useful results on  $J_1$ topology see, e.g., \cite{js}).  Every
process  $X$  appearing in the sequel is assumed to have
trajectories in the space $\D$. We denote by $\Ll^1$ the space of integrable random variables and by $\Ddet$ the space of
$({\cal F}_t)$-adapted process $X$ such that for every $q\in\Rp$
the family of random variables $\{X_t,t\leq q\}$ is uniformly
integrable. Note that our class $\Ddet$  is larger than usually
considered  Doobs class. For a semimartingale $X$,  $[X]$ stands
for the quadratic variation process of $X$ and $\langle X\rangle_t
$ stands for  the predictable compensator
 of $[X]$. For a process with locally finite
variation $K$ we denote by $|K|_t$ its total variation $[0,t]$. If
additionally $|K|$ is locally integrable, then $\widetilde{K}$
stands for the  predictable compensator of $K$.

For $x\in\Dd$, $t>0$, we set $x_{t-}=\lim_{s\uparrow t}x_s$,
$\Delta x_t=x_t-x_{t-}$ and $v_{p}(x)_{[a,b]} = \sup_\pi
\sum_{i=1}^m |x_{t_i}-x_{t_{i-1}}|^p <\infty$, where the supremum
is taken over all subdivisions $\pi=\{a=t_0<\ldots<t_n=b\}$ of
$[a,b]$. $V_p(x)_{[a,b]}=(v_p(x)_{[a,b]})^{1/p}$ and $\bar
V_p(x)_{[a,b]}=V_p(x)_{[a,b]}+|x_a|$ is the usual variation norm.
For simplicity of notation, we write $v_p(x)_q=v_p(x)_{[0,q]}$,
$V_p(x)_q= V_p(x)_{[0,q]}$ and $\bar V_p(x)_q= \bar
V_p(x)_{[0,q]}$. Note that $V_1(x)_q=v_1(x)_q=|x|_q$, $q\in\Rp$.
Recall also that for  $\eta>0$ and $q\in\Rp$ the number $N_{\eta}$
of $\eta$-oscillations of $x$  on $[0,q]$ is the largest  integer
$k$ such that one can find  $0\leq t_1\leq t_2\leq\dots\leq
t_{2k-1}\leq t_{2k}\leq q$ satisfying
$|x_{t_{2i-1}}-x_{t_{2i}}|>\eta$, $i=1,2,\dots,k$.

By  ${\displaystyle \arrowd}$ and ${\displaystyle\arrowp}$ we
denote the convergence in law and in probability, respectively.

\section{The Skorokhod problem with mean minimality condition}

Let $ l,u\in\D$ be such that $l\le u$, $(\Omega, {\cal F}, ({\cal
F}_t), P)$ be a filtered probability space and
$h:\Rp\times\R\rightarrow \R$ be a
$\B(\Rp)\times\B(\R)$-measurable function for which there exists a
constant $\Mh>0$ such that
\begin{equation}\label{eq2.1}
|h(t,x)|\le \Mh(1+|x|),\quad t\in\Rp,\, x\in\R.\end{equation}
\begin{definition}\label{def1}Let $Y$ be an $({\cal F}_t)$-adapted process with trajectories in $\D$,
$l,u\in\D$, $l\leq u$. Let  $h:\times\Rp\times\R\rightarrow \R$
satisfy (\ref{eq2.1}) and  $Eh(0,Y_0)\in[l_0,u_0]$. We say that a
pair $(X,k)\in{\cal D}\times\D$  with $k_0=0$ is a solution of the
Skorokhod problem) with mean minimality condition and  two
constraints)
 associated with $h,Y,l,u$  ($(X,k)=\SP{Y}$ for short) if
\begin{enumerate}[\bf(i)]
\item $X_t=Y_t+k_t$,
\item $Eh(t,X_t)\in[l_t,u_t],\quad t\in\Rp$,
\item for every $0\leq t\leq
q$,
\begin{eqnarray*}
k_q-k_t\ge 0,&&\quad\mbox{\rm if}\,\,Eh(s,X_s)<u_s\,\,\mbox{\rm for all}\,\,s\in(t,q],\\
k_q-k_t\le 0,&&\quad\mbox{\rm if}\,\,Eh(s,X_s)>l_s\,\,\mbox{\rm
for all}\,\,s\in(t,q],
\end{eqnarray*}
and for every $t\in\Rp$, $
\Delta k_t\ge 0$, if $Eh(t,X_t)<u_t$ and $\Delta k_t\le 0$, if
$Eh(t,X_t)>l_t$.
\end{enumerate}
\end{definition}
We consider the following assumption on  $h$.

\begin{enumerate}
\item[(H)] $h$ satisfies (\ref{eq2.1}),
$x\mapsto h(t,x)$ is strictly increasing for $t\in\Rp$
 and there exist constants  $\Lh,\Ch,\ch>0$ such that
\begin{equation}\label{eq2.2}|h(t,x)-h(s,x)|\le \Lh|t-s|,
\quad t,s\in\Rp,\,x\in\R\end{equation}
and
\begin{equation}\ch|x-y|\le |h(t,x)-h(t,y)|\le \Ch|x-y|,
\quad t\in\Rp,\,x,y\in\R.\label{eq2.3}
\end{equation}
\end{enumerate}

For given  $t\in\Rp$,  $Y\in\Ll^1$ and $h$ satisfying
{(H)} we define new map $H(t,\cdot,Y):\R\to\R$ by
\begin{equation}\label{eq2.4}H(t,z,Y)=Eh(t,Y-EY+z),\quad
z\in\R.\end{equation} By (\ref{eq2.1}), $H(t,\cdot,Y)$ is well
defined. Moreover, by {(H)}, it is strictly increasing, continuous
and $\lim_{z\to-\infty}H(t,z,Y)=-\infty$,
$\lim_{z\to+\infty}H(t,z,Y)=+\infty$. Hence there exists strictly
increasing and continuous inverse map $H^{-1}(t,\cdot,Y):\R\to\R$.
Clearly,  for every $z\in\R$,
\begin{equation}
\label{eq2.5}H^{-1}(t,z,Y)=\bar z\quad{\rm iff}\quad
Eh(t,Y-EY+\bar z)=z.
\end{equation}
It is obvious that if $h(t,x)=x$, $x\in\R$,  hen
$H^{-1}(t,z,Y)=z$, $z\in\R$.
\begin{lemma}
Assume  \mbox{\rm(H)} and  let $Y=(Y_t)\in{\cal D}$. If $\bar
z=(\bar z_t)\in\D$, then \[z=(z_t=H(t,\bar z_t,Y_t))\in\D.
\]
Similarly, if $z=(z_t)\in\D$, then
\[\bar z=(\bar z_t=H^{-1}(t,z_t,Y_t))\in\D.
\]
\label{lem1}
\end{lemma}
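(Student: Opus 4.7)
The argument rests on two structural observations. First, membership of $Y$ in $\Ddet$ means $\{Y_s:s\le q\}$ is uniformly integrable for every $q$, so Vitali's theorem together with the c\`adl\`ag property of the paths gives $Y_s\to Y_t$ in $\Ll^1$ as $s\downarrow t$ (resp.\ $Y_s\to Y_{t-}$ in $\Ll^1$ as $s\uparrow t$); in particular $t\mapsto EY_t$ is itself a c\`adl\`ag function. Second, by the monotonicity of $h(t,\cdot)$ together with (\ref{eq2.3}), for a fixed random variable $Y\in\Ll^1$ and any $\bar z_1>\bar z_2$ one has pointwise $\ch(\bar z_1-\bar z_2)\le h(t,Y-EY+\bar z_1)-h(t,Y-EY+\bar z_2)\le\Ch(\bar z_1-\bar z_2)$, so after integration
\[\ch|\bar z_1-\bar z_2|\le|H(t,\bar z_1,Y)-H(t,\bar z_2,Y)|\le\Ch|\bar z_1-\bar z_2|.\]
Hence $H^{-1}(t,\cdot,Y)$ is Lipschitz with constant $1/\ch$, uniformly in $t$ and $Y$.

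\textbf{Forward direction.} Combining (\ref{eq2.2})--(\ref{eq2.3}) and telescoping gives the key pointwise bound
\[|h(s,Y_s-EY_s+\bar z_s)-h(t,Y_t-EY_t+\bar z_t)|\le\Ch\bigl(|Y_s-Y_t|+|EY_s-EY_t|+|\bar z_s-\bar z_t|\bigr)+\Lh|s-t|.\]
Taking expectations and letting $s\downarrow t$: $E|Y_s-Y_t|\to0$ by the Vitali step, $|EY_s-EY_t|\to0$ as well, $|\bar z_s-\bar z_t|\to0$ since $\bar z\in\D$, and $|s-t|\to0$; therefore $z_s\to z_t$. Running the same inequality along $s\uparrow t$, with $Y_t,EY_t,\bar z_t$ replaced by $Y_{t-},EY_{t-},\bar z_{t-}$, shows that $z_s$ admits the left limit $Eh(t,Y_{t-}-EY_{t-}+\bar z_{t-})$. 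Thus $z\in\D$.

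\textbf{Inverse direction.} For $\bar z_t=H^{-1}(t,z_t,Y_t)$, the bi-Lipschitz bound yields, for any $s$,
\[|\bar z_s-\bar z_t|=\bigl|H^{-1}(s,z_s,Y_s)-H^{-1}\bigl(s,H(s,\bar z_t,Y_s),Y_s\bigr)\bigr|\le\frac{1}{\ch}\bigl(|z_s-z_t|+|H(t,\bar z_t,Y_t)-H(s,\bar z_t,Y_s)|\bigr).\]
As $s\downarrow t$ the first term vanishes because $z\in\D$, while the second vanishes by applying the forward argument to the constant path $\tilde z\equiv\bar z_t$. An analogous estimate in which $\bar z_t$ is replaced by $H^{-1}(t,z_{t-},Y_{t-})$ and the right limits are replaced by left ones produces the required left limit, completing the proof that $\bar z\in\D$.

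\textbf{Main obstacle.} The only non-algebraic ingredient is the passage from almost sure convergence to $\Ll^1$ convergence of $Y_s$, hence of $EY_s$ and of $Eh(s,\cdot)$; this is precisely what the uniform-integrability-on-compacts built into the definition of $\Ddet$ is designed for. Everything else is a mechanical application of (H).
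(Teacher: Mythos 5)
Your proof is correct, and in its harder (inverse) half it takes a genuinely different route from the paper's. For the forward direction the two arguments essentially coincide: both pass to the limit inside the expectation, the paper by invoking dominated convergence, you by the quantitative telescoped bound $|z_s-z_t|\le \Ch\bigl(E|Y_s-Y_t|+|EY_s-EY_t|+|\bar z_s-\bar z_t|\bigr)+\Lh|s-t|$ together with Vitali's theorem --- which is in fact the cleaner justification, since uniform integrability of $\{Y_t,\ t\le q\}$, not domination, is what membership in $\Ddet$ actually supplies. The real divergence is in the inverse direction. The paper argues by compactness: it first shows $\{\bar z_{t_n}\}$ is bounded (otherwise $Eh(t_n,Y_{t_n}-EY_{t_n}+\bar z_{t_n})$ would diverge), extracts a convergent subsequence, identifies its limit as the unique root of $z_t=Eh(t,Y_t-EY_t+z')$ via strict monotonicity, and then upgrades subsequential to full convergence. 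You instead note that the lower bound in (\ref{eq2.3}) makes $H(t,\cdot,Y)$ uniformly bi-Lipschitz, so $H^{-1}(t,\cdot,Y)$ is $1/\ch$-Lipschitz uniformly in $t$ and $Y$, and the convergence $\bar z_s\to\bar z_t$ (and the existence of left limits) drops out of a one-line estimate that reduces to the forward direction applied to a constant path. Your route is shorter, dispenses with the subsequence and uniqueness steps, and gives an explicit modulus of continuity for $\bar z$ in terms of that of $z$; the paper's route uses only strict monotonicity plus coercivity of $H$, so it would survive hypotheses weaker than the two-sided bound (\ref{eq2.3}). Both arguments are complete under (H).
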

\begin{proof}
It is easy to observe that by the Lebesgue dominated convergence
theorem, the function $t\mapsto z_t=Eh(t,Y_t-EY_t+\bar z_t)$ is
c\`adl\`ag. To check the second conclusion assume that
$z=(z_t)\in\D$. If $t_n\to t$, $t_n\geq t$, then from the right
continuity of $z,$ and $Y$, $z_{t_n}\to z_t$ and $Y_{t_n}\to Y_t$
$P$-a.s., which implies that the sequence $\{\bar z_{t_n}\}$ is
bounded (if not, there exists a subsequence $(n')\subset(n)$ such
that $\lim_{t_{n'}}|Eh(t_{n'},Y_{t_{n'}}-EY_{t_{n'}}+\bar
z_{t_{n'}})|=+\infty$). Consequently, we may and will assume that
for some subsequence $(n')\subset(n)$, $\bar z_{t_{n'}}\to z'$.
Then using once again  the Lebesgue dominated convergence shows
that $z'$ satisfies the equation
\[
z_t=Eh(t,Y_t-EY_t+z'),
\]
which implies that $\bar z_t=z'$ and completes the proof of the
right continuity of $\bar z=(\bar z_t)$. Similarly we show that if
$t_n\to t$, $t_n< t$, then there exists a limit $\bar z_{t-}$ of
$\{\bar z_{t_n}\}$ satisfying
\[z_{t-}=Eh(t,Y_{t-}-EY_{t-}+\bar z_{t-}).
\]
\end{proof}
\begin{theorem}
Assume  \mbox{\rm(H)}.  If $Y=(Y_t)\in{\cal D}$,  $l,u\in\D$ and
$Eh(0,Y_0)\in[l_0,u_0]$, then there exists a unique  solution of
the Skorokhod problem $(X,k)=\SP{Y}$ . Moreover,
\begin{equation}\label{eq2.6}
k_t=-\max(0\wedge\inf_{0\leq u\leq t}(EY_u-\bar l_u),
\sup_{0\leq s\leq t}[(EY_s-\bar u_s)
\wedge\inf_{0\leq u\leq t}(EY_u-\bar l_u)]),\quad t\in\Rp,
\end{equation}
where $\bar l=H^{-1}\circ l$, $\bar u=H^{-1}\circ u$, i.e. $\bar
l_t=H^{-1}(t,l_t,Y_t)$, $\bar u_t=H^{-1}(t,u_t,Y_t)$, $t\in\Rp$.
\label{thm1}
\end{theorem}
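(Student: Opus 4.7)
My plan is to reduce the mean Skorokhod problem of Definition \ref{def1} to the classical deterministic two-barrier Skorokhod problem by exploiting the strictly increasing transformation $z \mapsto H(t,z,Y_t)$. Since $k$ is deterministic, any candidate solution $X = Y + k$ satisfies $X_t - EX_t = Y_t - EY_t$ $P$-a.s., and hence by the definition (\ref{eq2.4}) of $H$,
\[
Eh(t,X_t) \;=\; Eh\bigl(t,(Y_t - EY_t) + (EY_t + k_t)\bigr) \;=\; H(t,\,EY_t + k_t,\,Y_t).
\]
Writing $y_t = EY_t$ (which is c\`adl\`ag because $Y \in \Ddet$ ensures local uniform integrability, so that dominated convergence transfers the c\`adl\`ag structure of $Y$ to its expectation) and using Lemma \ref{lem1} to place the transformed barriers $\bar l = H^{-1}\circ l$ and $\bar u = H^{-1}\circ u$ in $\D$, the strict monotonicity of $H(t,\cdot,Y_t)$ turns condition (ii) of Definition \ref{def1} into $\bar l_t \le y_t + k_t \le \bar u_t$, and translates every clause of the minimality condition (iii), the statements on intervals $(t,q]$ as well as the jump statements, into its classical deterministic counterpart (for instance, $Eh(s,X_s) < u_s$ iff $y_s + k_s < \bar u_s$, and similarly on the lower side).

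I would then note that the hypothesis $Eh(0,Y_0) \in [l_0,u_0]$ becomes $y_0 \in [\bar l_0, \bar u_0]$ after applying $H^{-1}(0,\cdot,Y_0)$, which is the standard initial-value assumption for the deterministic problem. Invoking the classical existence, uniqueness, and explicit-formula theory for the two-sided deterministic Skorokhod problem (see, e.g., \cite{BKR,ChL}) for the input $y$ on $[\bar l, \bar u]$ yields a unique c\`adl\`ag $k$ with $k_0 = 0$ given by (\ref{eq2.6}), which is precisely the classical Chaleyat-Maurel--El Karoui-type expression written for $y$ and the transformed barriers. Setting $X_t = Y_t + k_t$ then produces the sought $(X,k) = \SP{Y}$, and conversely any solution of the mean problem must, via the same transformation, solve the deterministic problem on $[\bar l, \bar u]$, so uniqueness of the deterministic $k$ forces uniqueness in the mean setting as well.

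The main technical step is the bidirectional translation of the minimality condition (iii), especially at jumps; this hinges entirely on the continuity and strict monotonicity of $H(t,\cdot,Y_t)$, which are guaranteed by (H) and already exploited in Lemma \ref{lem1}. Once that translation is in place, the theorem becomes a direct corollary of the classical deterministic theory, with the only remaining bookkeeping being the c\`adl\`ag regularity of $y = EY$ (via uniform integrability from $\Ddet$) and of $\bar l, \bar u$ (by Lemma \ref{lem1}), and the fact that $X = Y + k \in \Ddet$ since $Y \in \Ddet$ and $k$ is deterministic and locally bounded.
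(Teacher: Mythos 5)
Your proposal is correct and follows essentially the same route as the paper: both reduce the mean problem to the deterministic two-barrier Skorokhod problem for $y=EY$ with transformed barriers $\bar l=H^{-1}\circ l$, $\bar u=H^{-1}\circ u$, using the strict monotonicity of $H(t,\cdot,Y_t)$ and Lemma \ref{lem1} to translate conditions (ii) and (iii) in both directions, and then invoke the deterministic existence, uniqueness and explicit formula (Theorem \ref{thm5.1} and (\ref{eq5.1}) in the Appendix). The key identity $Eh(t,Y_t+k_t)=H(t,y_t+k_t,Y_t)$, valid because $k$ is deterministic, is exactly the observation the paper's proof rests on.
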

\begin{proof}
In the proof we use  results from the theory of  deterministic
Skorokhod problem (see Appendix). First we show the existence of
solutions of the  Skorokhod problem with mean minimality
condition. Set $y_t=EY_t$, $t\in\Rp$, and observe that
$y=(y_t)\in\D$. By Lemma \ref{lem1},  $\bar l=H^{-1}\circ l\in\D$,
$\bar u=H^{-1}\circ u\in\D$. Since $H^{-1}$ is strictly
increasing,
\[EY_0=H^{-1}(0,Eh(0,Y_0),Y_0)\in[\bar l_0,\bar u_0].\]
By Theorem \ref{thm5.1}, there exists a unique solution  of the
Skorokhod problem $(x,k)=SP_{\bar l}^{\bar u}(y)$ such that for
every $0\leq t\leq q$,
\begin{eqnarray*}
k_q-k_t\geq0&&\quad\mbox{\rm if
}x_s<H^{-1}(s,u_s,Y_s)\,\,\mbox{\rm for
all }s\in(t,q],\\
k_q-k_t\leq0&&\quad\mbox{\rm if
}x_s>H^{-1}(s,l_s,Y_s)\,\,\mbox{\rm for all
}s\in(t,q],
\end{eqnarray*}
and for every $t\in\Rp$, $ \Delta
k_t\geq0$ if $x_t<H^{-1}(t,u_t,Y_t)$ and $\Delta k_t\leq0$ if
$x_t>H^{-1}(t,l_t,Y_t)$.

Set
\[
X_t=Y_t+k_t,\quad t\in\Rp,
\]
and note that $(X,k)=\SP{Y}$.
Indeed, since $H$ is strictly increasing,
\[
Eh(t,X_t)=Eh(t,Y_t+k_t)=H(t,x_t,Y_t) \in[H\circ\bar l_t,H\circ
u_t]=[l_t,u_t],\quad t\in\Rp,
\]
which  gives  condition (ii) from Definition \ref{def1}. To check
condition (iii) it is sufficient to observe that for every
$s\in\Rp$,
\[x_s<H^{-1}(s,u_s,Y_s)\quad {\rm iff}\quad Eh(s,X_s)<u_s\]
and
\[x_s>H^{-1}(s,l_s,Y_s)\quad {\rm iff}\quad Eh(s,X_s)>l_s.\]

It is easy to prove that every solution  of the Skorokhod  problem
with mean minimality condition is of the form given above. Indeed,
using the arguments used previously one can check that if
$(X',k')=\SP{Y}$, then $(x',k')=SP_{\bar l}^{\bar u}(y)$, where
$x'_t=H^{-1}(t,Eh(t,X'_t),Y_t)$, $t\in\Rp$. By the uniqueness of
the deterministic  Skorokhod problem (see Theorem \ref{thm5.1}),
$(x',k')=(x,k)$, which implies  that $X'_t=Y_t+k_t=X_t$,
$t\in\Rp$. Finally, observe that the form of the compensating
reflection part $k$ follows easily from (\ref{eq5.1}).
\end{proof}

\begin{proposition}
\label{prop1} Assume that $h^i$, $i=2$, satisfy \mbox{ \rm(H)} and
$Y^i\in{\cal D}$, $i=1,2$,  are processes defined on the same
probability space If   $l^i,u^i\in\D$  are such that $l^i\leq
u^i$, $l^i_0\leq Eh^i(0,Y_0)\leq u^i_0$,  $(X^i,
k^i)=\mathbb{SP}^{u^i}_{l^i}(h^i,Y^i)$, $i=1,2$, and
$C_h=\max((c_h^1)^{-1},2c_h^2(c^1_h)^{-1}+1)$, then  for every
$q\in\Rp$,
\begin{equation}
\label{eqprop}\sup_{t\leq q}|k^1_t-k^2_t|\leq
C_h\big(\parallel h^1-h^2\parallel_{[0,q]\times\R}
+ \sup_{t\leq q}E|Y^1_t-Y^2_t|+\sup_{t\leq q}
\max(|l^1_t-l^2_t|,|u^1_t-u^2_t|)\big)
\end{equation}
and
\begin{eqnarray}\label{eqprop1}E\sup_{t\leq q}|X^1_t-X^2_t|
&\leq& (C_h+1)E\sup_{t\le q}|Y^1_t-Y^2_t|+
C_h\big(\parallel h^1-h^2\parallel_{[0,q]\times\R}\nonumber\\
&&\qquad+\sup_{t\leq
q}\max(|l^1_t-l^2_t|,|u^1_t-u^2_t|)\big),
\end{eqnarray} where
$\parallel h^1-h^2\parallel_{[0,q]\times\R}
=\sup_{(t,x)\in[0,q]\times\R}|h^1(t,x)-h^2(t,x)|$.
\end{proposition}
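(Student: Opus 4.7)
The plan is to reduce the problem to the deterministic two-sided Skorokhod problem and then invoke its Lipschitz stability, which should be available in the Appendix referenced in the proof of Theorem \ref{thm1}. Concretely, setting $y^i_t=EY^i_t$, $\bar l^i_t=(H^i)^{-1}(t,l^i_t,Y^i_t)$ and $\bar u^i_t=(H^i)^{-1}(t,u^i_t,Y^i_t)$, the proof of Theorem \ref{thm1} already identifies $(x^i,k^i):=SP_{\bar l^i}^{\bar u^i}(y^i)$ with $x^i_t=(H^i)^{-1}(t,Eh^i(t,X^i_t),Y^i_t)$, and the function $k^i$ coincides with the compensating part of $\SPp{Y^i}{l^i}{u^i}$. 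Thus it suffices to quantify how the data $(y^i,\bar l^i,\bar u^i)$ of the classical deterministic Skorokhod problem depend on $(Y^i,h^i,l^i,u^i)$, and then apply a standard sup-norm stability estimate of the form $\sup_{t\le q}|k^1_t-k^2_t|\le\sup_{t\le q}|y^1_t-y^2_t|+\sup_{t\le q}\max(|\bar l^1_t-\bar l^2_t|,|\bar u^1_t-\bar u^2_t|)$, which follows directly from the explicit formula (\ref{eq2.6}) applied to both problems (and is essentially the statement in the Appendix for $SP_{\bar l}^{\bar u}(y)$).

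The main step is the estimate on the inverses. Fix $t\le q$ and write $\bar z^i=(H^i)^{-1}(t,z^i,Y^i_t)$ for the relevant pair $(z^1,z^2)$ (playing the role of $l^1_t,l^2_t$ or $u^1_t,u^2_t$). From (H) applied to $h^1$ and the definition (\ref{eq2.4}) I get
\[
\ch|\bar z^1-\bar z^2|\le|H^1(t,\bar z^1,Y^1_t)-H^1(t,\bar z^2,Y^1_t)|=|z^1-H^1(t,\bar z^2,Y^1_t)|.
\]
Inserting and subtracting $H^1(t,\bar z^2,Y^2_t)$ and $H^2(t,\bar z^2,Y^2_t)=z^2$, the Lipschitz bound in (\ref{eq2.3}) gives $|H^1(t,\bar z^2,Y^1_t)-H^1(t,\bar z^2,Y^2_t)|\le 2\Ch E|Y^1_t-Y^2_t|$ (the factor $2$ comes from the centering $Y-EY$ in (\ref{eq2.4})), while $|H^1(t,\bar z^2,Y^2_t)-H^2(t,\bar z^2,Y^2_t)|\le\|h^1-h^2\|_{[0,q]\times\R}$. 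Together:
\[
|\bar z^1-\bar z^2|\le(\ch)^{-1}\bigl(|z^1-z^2|+2\Ch E|Y^1_t-Y^2_t|+\|h^1-h^2\|_{[0,q]\times\R}\bigr),
\]
which yields the required pointwise control of $|\bar l^1_t-\bar l^2_t|$ and $|\bar u^1_t-\bar u^2_t|$.

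Plugging these bounds into the deterministic stability estimate for $k$, and using $|y^1_t-y^2_t|\le E|Y^1_t-Y^2_t|$, I obtain
\[
\sup_{t\le q}|k^1_t-k^2_t|\le(2\Ch(\ch)^{-1}+1)\sup_{t\le q}E|Y^1_t-Y^2_t|+(\ch)^{-1}\bigl(\|h^1-h^2\|_{[0,q]\times\R}+\sup_{t\le q}\max(|l^1_t-l^2_t|,|u^1_t-u^2_t|)\bigr),
\]
and since $\CL=\max((\ch)^{-1},2\Ch(\ch)^{-1}+1)$, this gives (\ref{eqprop}). For (\ref{eqprop1}), I simply use $X^i=Y^i+k^i$ to write $|X^1_t-X^2_t|\le|Y^1_t-Y^2_t|+|k^1_t-k^2_t|$, take $\sup_{t\le q}$ inside and then expectation, absorbing $\sup_{t\le q}E|Y^1_t-Y^2_t|\le E\sup_{t\le q}|Y^1_t-Y^2_t|$ into the $(\CL+1)$ coefficient.

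The one delicate point is the first step: deriving the deterministic stability inequality for $SP_{\bar l}^{\bar u}(y)$ with the simple constant $1$ in front of $\sup|y^1-y^2|$ and of the barrier increments. Fortunately this is a direct consequence of the explicit representation (\ref{eq2.6}) (rewritten for the deterministic problem), since each of the three operations there — pointwise difference, $\inf_{u\le t}$, $\sup_{s\le t}$, and $\max/\min$ with $0$ — is $1$-Lipschitz in the sup norm. Everything else is bookkeeping of the constants to fit them under a single $\CL$.
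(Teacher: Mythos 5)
Your proposal is correct and follows essentially the same route as the paper: reduce to the deterministic Skorokhod problems $SP_{\bar l^i}^{\bar u^i}(y^i)$, apply the sup-norm Lipschitz estimate for $k$ (the paper cites (\ref{eq5.3}) from the Appendix rather than rederiving it from (\ref{eq2.6}), but these are the same bound), and control $|\bar l^1_t-\bar l^2_t|$, $|\bar u^1_t-\bar u^2_t|$ by a three-term telescoping of $H^1,H^2$ using (\ref{eq2.3}), which yields exactly the paper's bound $(c_h^1)^{-1}(\|h^1-h^2\|+2c_h^2E|Y^1_t-Y^2_t|+|l^1_t-l^2_t|)$. The only cosmetic difference is which of $h^1,h^2$ carries the Lipschitz constants in the telescoping, which is immaterial since the constants are shared.
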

\begin{proof} From the proof of Theorem \ref{thm1}  we know
that $k^i$ are compensation reflection parts of  solutions of the
deterministic  Skorokhod problems $SP_{\bar l^i}^{\bar u^i}(y^i)$
with $y^i_t=EY^i_t$,  $\bar l^i_t=H_i^{-1}(t,l^i_t,Y^i_t)$, $\bar
u^i_t=H_i^{-1}(t,u^i_t,Y^i_t)$, $t\in\Rp$, $i=1,2$. By
(\ref{eq5.3}),
\begin{equation}\label{eq2.7}
 \sup_{t\leq q}|k^1_t-k^2_t|\leq\sup_{t\leq q}|EY^1_t-EY^2_t|
+\sup_{t\leq q}\max(|\bar l^1_t-\bar l^2_t|,|\bar u^1_t-\bar u^2_t|).
\end{equation}
Fix $t\in[0,q]$. To estimate $|\bar l^1_t-\bar l^2_t|$ observe that
\begin{eqnarray*}
l^1_t-l^2_t&=&Eh^1(t,Y^1_t-EY^1_t+\bar l^1_t)-Eh^2(t,Y^2_t-EY^2_t+\bar l^2_t)\\
&=&Eh^1(t,Y^1_t-EY^1_t+\bar l^1_t)-Eh^2(t,Y^1_t-EY^1_t+\bar l^1_t)\\
&&\,+Eh^2(t,Y^1_t-EY^1_t+\bar l^1_t)-Eh^2(t,Y^2_t-EY^2_t+\bar l^1_t)\\
&&\,+Eh^2(t,Y^2_t-EY^2_t+\bar l^1_t)-Eh^2(t,Y^2_t-EY^2_t+\bar l^2_t)\\
&=&I^1_t+I^2_t+I^3_t.
\end{eqnarray*}
Without loss of generality we may assume that $\bar l^1_t>\bar
l^2_t$. Then, by (\ref{eq2.3}), there exists $C\in[c_h^1,c_h^2]$
such that $I^3_t=C(\bar l^1_t-\bar l^2_t)$ , which implies that
\begin{eqnarray*}|\bar l^1_t-\bar l^2_t|&\leq& (1/c_h^1)(|I^1_t|+|I^2_t| +|l^1_t-l^2_t|)\\
&\leq&(1/c_h^1)(\sup_{x\in \R}|h^1(t,x)-h^2(t,x)|+
2c_h^2E|Y^1_t-Y^2_t| +|l^1_t-l^2_t|).\end{eqnarray*} Similarly we
estimate $|\bar u^1_t-\bar u^2_t|$.  From the above and
(\ref{eq2.7}) we deduce (\ref{eqprop}). Estimete (\ref{eqprop1})
easily follows from (\ref{eqprop}).
\end{proof}

\begin{corollary}
\label{cor1}Assume \mbox{\rm ({H})}.  If $Y\in{\cal D}$,
$l,u\in\D$  are such that $l\leq u$, $l_0\leq Eh(0,Y_0)\leq u_0$,
$(X, k)=\mathbb{SP}^u_l(h,Y)$, then for  every $t,q\in\Rp$ such
that $t\leq q$,
\begin{equation}
\label{eqcor}\sup_{t\leq s\leq q}|k_s-k_t|\leq
C_h\big( \sup_{t\leq s\leq q}E|Y_s-Y_t|+\lambda_h(q-t)+\sup_{t\leq s\leq q}\max(|l_s-l_t|,|u_s-u_t|)\big)\end{equation}
and\begin{eqnarray}\label{eqcor1}E\sup_{t\leq s\leq q}|X_s-X_t|&\leq&\nonumber
(C_h+1)E\sup_{t\leq s\leq q}|Y_s-Y_t|\\
&&\qquad+C_h\big(\lambda_h(q-t)+ \sup_{t\leq s\leq q}\max(|l_s-l_t|,|u_s-u_t|)\big).\end{eqnarray}\end{corollary}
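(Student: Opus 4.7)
The plan is to reduce \eqref{eqcor} directly to Proposition \ref{prop1} by a freezing argument, after which \eqref{eqcor1} will follow by the triangle inequality. Fix $t\in[0,q]$ and build an auxiliary Skorokhod problem whose data are obtained by freezing the original data at time $t$:
$$
\tilde Y_s = Y_{s\wedge t},\quad \tilde l_s = l_{s\wedge t},\quad \tilde u_s = u_{s\wedge t},\quad \tilde h(s,x) = h(s\wedge t,x).
$$
All standing hypotheses transfer to the tilded data: $\tilde Y\in\Ddet$ (adaptedness and uniform integrability are inherited from $Y$), $\tilde l,\tilde u\in\D$ with $\tilde l\le\tilde u$, $\tilde l_0\le E\tilde h(0,\tilde Y_0)\le \tilde u_0$, and $\tilde h$ satisfies \mbox{\rm(H)} with the very same constants $\Mh,\Lh,\ch,\Ch$. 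Hence by Theorem \ref{thm1} there exists a unique solution $(\tilde X,\tilde k)=\mathbb{SP}_{\tilde l}^{\tilde u}(\tilde h,\tilde Y)$.

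The crucial step is to identify $(\tilde X,\tilde k)$ explicitly: it agrees with $(X,k)$ on $[0,t]$ and is constant, equal to $(X_t,k_t)$, on $[t,\infty)$. To verify this, define $(X',k')$ by that prescription and check Definition \ref{def1}. On $[0,t]$ the two problems share identical data, so $(X,k)$ itself fulfils the three conditions. For $s>t$: condition (i) holds because $X'_s=X_t=Y_t+k_t=\tilde Y_s+k'_s$; (ii) holds because $E\tilde h(s,X'_s)=Eh(t,X_t)\in[l_t,u_t]=[\tilde l_s,\tilde u_s]$ by condition (ii) of the original solution at time $t$; and (iii) is automatic, since $k'$ has zero increments and zero jumps on $(t,\infty)$, which satisfies both the $\ge 0$ and the $\le 0$ alternatives of the minimality condition. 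Uniqueness in Theorem \ref{thm1} then yields $(X',k')=(\tilde X,\tilde k)$.

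Now apply Proposition \ref{prop1} to $(X,k)$ and $(\tilde X,\tilde k)$ on the interval $[0,q]$. The two solutions agree on $[0,t]$ while $\tilde k_s=k_t$ on $[t,q]$, so the left-hand side of \eqref{eqprop} collapses to $\sup_{t\le s\le q}|k_s-k_t|$. On the right-hand side, \eqref{eq2.2} gives $\|h-\tilde h\|_{[0,q]\times\R}\le\Lh(q-t)$, while the freezing immediately yields $\sup_{s\le q}E|Y_s-\tilde Y_s|=\sup_{t\le s\le q}E|Y_s-Y_t|$ and analogous identities for $l$ and $u$. This is exactly \eqref{eqcor}.

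Estimate \eqref{eqcor1} then follows by a one-line triangle-inequality argument: from $|X_s-X_t|\le|Y_s-Y_t|+|k_s-k_t|$ one obtains, after taking $\sup_{t\le s\le q}$ and expectations, $E\sup_{t\le s\le q}|X_s-X_t|\le E\sup_{t\le s\le q}|Y_s-Y_t|+\sup_{t\le s\le q}|k_s-k_t|$; substituting \eqref{eqcor} together with $\sup_{t\le s\le q}E|Y_s-Y_t|\le E\sup_{t\le s\le q}|Y_s-Y_t|$ produces \eqref{eqcor1}. The only step that demands genuine care is the verification that the constant continuation really solves the frozen Skorokhod problem on $(t,\infty)$ — in particular that the minimality condition (iii) accommodates a zero-increment compensator — but once this is granted, Proposition \ref{prop1} performs all the remaining work.
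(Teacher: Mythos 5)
Your proposal is correct and is essentially the paper's own proof: the authors likewise apply Proposition \ref{prop1} with the frozen data $Y^2=Y_{\cdot\wedge t}$, $l^2=l_{\cdot\wedge t}$, $u^2=u_{\cdot\wedge t}$, $h^2=h(\cdot\wedge t,\cdot)$ and bound $\parallel h^1-h^2\parallel_{[0,q]\times\R}$ by $\lambda_h(q-t)$ via (\ref{eq2.2}). Your explicit verification that the stopped pair $(X_{\cdot\wedge t},k_{\cdot\wedge t})$ solves the frozen Skorokhod problem is a detail the paper leaves implicit, but it is the same argument.
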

\begin{proof}
Fix $t\leq q$ and in Proposition  \ref{prop1} put $Y^1=Y$,
$l^1=l$, $u^1=u$, $h^1=h$ and $Y^2=Y_{\cdot\wedge t}$,
$l^2=l_{\cdot\wedge t}$, $u^2=u_{\cdot\wedge t}$,
$h^2=h_{\cdot\wedge t}$. To complete the proof it is sufficient to
observe that by (\ref{eq2.2}),
\[
\sup_{(t,x)\in[0,q]\times\R}|h^1(t,x)-h^2(t,x)|
\leq \sup_{(s,x)\in[t,q]\times\R}|h(s,x)-h(t,x)|\leq \lambda_h(q-t).
\]
\end{proof}

\begin{remark}\label{rem_dwie}
(a) In the above definition we can replace condition (iii) with
the following one:
\begin{enumerate}
\item [{\bf (iii')}]for every $t\le q\in\Rp$
such that $\inf_{s\in[t,q]}(u_s-l_s)>0$, $k$ is a function of
bounded variation on $[t,q]$ and
\begin{equation}\label{eq_wrminidwie}
\int_{[t,q]} [Eh(s,X_s)-l_s]\,dk_s\le 0  \text{ and }\int_{[t,q]}
[Eh(s,X_s)-u_s]\,dk_s\le0.
\end{equation}
\end{enumerate}

Simple calculation shows that (i), (ii), (iii) and (i), (ii),
(iii') are equivalent.

(b) Note that by \eqref{eq_wrminidwie}, if $\Delta k_t>0$, then
$Eh(t,X_t)=l_t$ and hence $X_t=Y_t-EY_t+\bar l_t$. Similarly, if
$\Delta k_t<0$, then $Eh(t,X_t)=u_t$ and $X_t=Y_t-EY_t+\bar u_t$.
Consequently,
\begin{equation}\label{eq_kwzor}
k_t=\max(\min(k_{t-},\bar u_t-EY_t),\bar l_t-EY_t)),\quad t\in\Rp.
\end{equation}
\end{remark}

Under the additional assumption that  the compensating reflection
part $k$ has  bounded variation the minimality condition {\bf
(iii)} can be written in the following simpler form: for every
 $t\in\Rp$,
\[
\int_{0}^t [Eh(s,X_s)-l_s]\,dk^+_s=0  \text{ and }\int_{0}^t
[u_s-Eh(s,X_s)]\,dk^-_s=0,\]
 where $k^{(+)}$, $k^{(-)}$ are nondecreasing right continuous
functions with $k_0=k^{(+)}_0=k^{(-)}_0=0$ such that $k^{(+)}$
increases only on $\{t;Eh(t,X_t)={l}_t\}$ and  $k^{(-)}$ increases
only on $\{t;Eh(t,X_t)={u}_t\}$. If the barriers $l,u\in\D$
satisfy the  condition
\begin{equation}\label{eq2.8}
\inf_{t\leq q}({u}_t-{l}_t)>0,\quad q\in\Rp,
\end{equation}
then it is possible to estimate the variation of  $k$ using
Proposition \ref{prop5}.

\begin{corollary}\label{cor2}
If $(X, k)=\mathbb{SP}^{u}_{l}(h,Y)$, then for any $q\in\Rp$ and
$\eta$ such that $0<2\eta\leq\inf_{t\leq q}({u}_t-{l}_t)/3$ we
have
\begin{equation}\label{eq2.9}|k|_q\leq
6(N_{\eta}(y,q)+N_{\eta}(\bar l,q)+N_{\eta}(\bar u,q)+1)(\sup_{
t\leq q}|y_t|+\sup_{ t\leq q}\max(|\bar l_t|,|\bar
u_t|)),\end{equation} where $y_t=EY_t$,  $\bar
l_t=H^{-1}(t,l_t,Y_t)$ and $\bar u_t=H^{-1}(t,u_t,Y_t)$,
$t\in\Rp$.
\end{corollary}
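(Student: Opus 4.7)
The proof plan is a direct reduction to the deterministic Skorokhod problem, combined with the a priori variation bound for that problem (Proposition 2.5 of the Appendix). From the proof of Theorem \ref{thm1} we already know that the compensating reflection part $k$ of $(X,k)=\SP{Y}$ coincides pathwise with the compensating part of the classical deterministic two-sided Skorokhod problem $(x,k)=SP_{\bar l}^{\bar u}(y)$, where $y_t=EY_t$, $\bar l_t=H^{-1}(t,l_t,Y_t)$ and $\bar u_t=H^{-1}(t,u_t,Y_t)$. Consequently bounding the total variation $|k|_q$ becomes a purely deterministic question about a two-sided Skorokhod problem with driver $y$ and barriers $\bar l\leq\bar u$, exactly the setting of Proposition 2.5 of the Appendix.

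Before invoking that proposition I would verify that the quantitative gap hypothesis $0<2\eta\leq\inf_{t\leq q}(u_t-l_t)/3$ transfers to the transformed barriers. Under (H) the map $H(t,\cdot,Y_t)$ is bi-Lipschitz with constants $\ch\leq\Ch$, so its inverse is bi-Lipschitz with constants $1/\Ch\leq1/\ch$, which yields
\[
\bar u_t-\bar l_t\;\geq\;(u_t-l_t)/\Ch,\qquad t\in\Rp,
\]
and in particular preserves positivity of the infimum of the gap on $[0,q]$ (this is also the reason why assumption \eqref{eq2.8} suffices for $k$ to have locally bounded variation in the first place). With this check in place, Proposition 2.5 applied to $(x,k)=SP_{\bar l}^{\bar u}(y)$ delivers the estimate \eqref{eq2.9} word for word, since $y$, $\bar l$, $\bar u$ are precisely the objects appearing in the right-hand side and the universal constants $6$ and $+1$ come directly from the Appendix statement.

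The only nontrivial point in executing the plan is the bookkeeping of the $\eta$-gap hypothesis: one has to read the assumption $0<2\eta\leq\inf_{t\leq q}(u_t-l_t)/3$ together with the bi-Lipschitz estimate above so that it matches the precise form of the hypothesis required in Proposition 2.5 (possibly absorbing a factor of $\Ch$ into $\eta$ if the Appendix result is phrased in terms of the gap between the reflecting barriers themselves). Beyond this, no new analytic ingredient is needed; the corollary is an immediate consequence of the identification $(x,k)=SP_{\bar l}^{\bar u}(y)$ supplied by Theorem \ref{thm1} together with the deterministic variation estimate of the Appendix.
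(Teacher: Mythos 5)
Your proposal is correct and is exactly the paper's (implicit) argument: the paper offers no separate proof of Corollary \ref{cor2}, merely noting that under \eqref{eq2.8} the identification $(x,k)=SP_{\bar l}^{\bar u}(y)$ from the proof of Theorem \ref{thm1} reduces everything to Proposition \ref{prop5}. Your extra care about the gap hypothesis is warranted — since $\bar u_t-\bar l_t\ge (u_t-l_t)/c_h^2$ one really does need to read the condition $0<2\eta\le\inf_{t\le q}(u_t-l_t)/3$ as applying (after rescaling $\eta$ by $c_h^2$, or directly) to the transformed barriers $\bar l,\bar u$ before invoking Proposition \ref{prop5}, a point the paper silently glosses over.
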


Similarly to the case of the deterministic Skorokhod problem with
$u=+\infty$ or $l=-\infty$ the definitions of solutions and  forms
of $k$ are much simpler. We start with the case of one lower
barrier.
\begin{definition}
\label{def2}
Let $Y$ be an $({\cal F}_t)$-adapted
process with trajectories in $\D$,
 $l\in\D$ and let $h:\Rp\times\R\rightarrow \R$
satisfy (\ref{eq2.1}) and  $Eh(0,Y_0)\geq l_0$. We say that a pair
$(X,k)\in{\cal D}\times\D$  with $k_0=0$ is a solution to the
Skorokhod problem (with mean minimality condition and a lower
barrier) associated with $h,Y,l$  ($(X,k)=\SPl{Y}$ for short) if
\begin{enumerate}[\bf(i)]
\item $X_t=Y_t+k_t$,
\item $Eh(t,X_t)\geq l_t,\quad t\in\Rp$,
\item $k$ is nondecreasing and
\[\int_{0}^t [Eh(s,X_s)-l_s]\,dk_s=0,\quad t\in\Rp.\]
\end{enumerate}
\end{definition}

Using the arguments from the proof of Theorem \ref{thm1} and
(\ref{eq5.7}) we obtain the following corollary.
\begin{corollary}
Assume \mbox{\rm{(H)}}. If $Y=(Y_t)\in{\cal D}$, $l\in\D$ and
$Eh(0,Y_0)\geq l_0$, then there exists a unique solution of the
Skorokhod problem $(X,k)=\SPl{Y}$. Moreover,
\begin{equation}\label{eq2.10}
k_t=\sup_{s\leq t}(\bar l_s-EY_s)^+,\quad t\in\Rp,\end{equation}
where  $\bar l_t=H^{-1}(t,l_t,Y_t)$, $t\in\Rp$. \label{cor3}
\end{corollary}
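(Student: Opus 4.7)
The plan is to mirror the proof of Theorem \ref{thm1}, reducing the mean-reflected problem to the classical deterministic Skorokhod problem with one lower barrier, for which existence, uniqueness and the explicit formula are known.

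First, set $y_t=EY_t$ and $\bar l_t=H^{-1}(t,l_t,Y_t)$. By hypothesis $Y\in\Ddet$, so $y\in\D$, and Lemma \ref{lem1} gives $\bar l\in\D$. The initial condition $Eh(0,Y_0)\geq l_0$, together with the strict monotonicity of $H^{-1}(0,\cdot,Y_0)$, rewrites as
\[
y_0=EY_0=H^{-1}(0,Eh(0,Y_0),Y_0)\geq H^{-1}(0,l_0,Y_0)=\bar l_0,
\]
so the deterministic one-barrier Skorokhod problem $SP_{\bar l}(y)$ admits, by \eqref{eq5.7}, a unique solution $(x,k)$ with $x_t=y_t+k_t$, $x_t\geq\bar l_t$, $k$ nondecreasing and $\int_0^t(x_s-\bar l_s)\,dk_s=0$, and with
\[
k_t=\sup_{s\leq t}(\bar l_s-y_s)^+,\quad t\in\Rp.
\]

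Next, I would define $X_t=Y_t+k_t$ and verify that $(X,k)=\SPl{Y}$. Since $k$ is deterministic,
\[
Eh(t,X_t)=Eh(t,Y_t-EY_t+(y_t+k_t))=H(t,x_t,Y_t),
\]
and the strict monotonicity of $H(t,\cdot,Y_t)$ together with $x_t\geq\bar l_t$ yields $Eh(t,X_t)\geq H(t,\bar l_t,Y_t)=l_t$, verifying (ii) of Definition \ref{def2}. For (iii), $k$ inherits monotonicity from the deterministic problem, and since $k$ grows only on $\{s:x_s=\bar l_s\}=\{s:Eh(s,X_s)=l_s\}$, we obtain $\int_0^t[Eh(s,X_s)-l_s]\,dk_s=0$.

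For uniqueness, given any solution $(X',k')=\SPl{Y}$, I would set $x'_t=H^{-1}(t,Eh(t,X'_t),Y_t)$; since $X'_t=Y_t+k'_t$ and $H$ is strictly increasing, one checks just as above that $x'_t=y_t+k'_t$, $x'_t\geq\bar l_t$, $k'$ is nondecreasing and $\int_0^t(x'_s-\bar l_s)\,dk'_s=0$. Hence $(x',k')$ solves $SP_{\bar l}(y)$, so by uniqueness in the deterministic case $(x',k')=(x,k)$, which forces $X'=X$. The explicit formula \eqref{eq2.10} is then read off directly from the formula for $k$ in $SP_{\bar l}(y)$ recalled above.

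I do not anticipate a substantial obstacle: the only point requiring care is verifying that the minimality conditions of the mean-reflected and deterministic problems translate into each other, and this is an immediate consequence of the strict monotonicity and continuity of $H(t,\cdot,Y_t)$ from assumption (H), which were already exploited in the proof of Theorem \ref{thm1}.
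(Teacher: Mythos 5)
Your proposal is correct and follows exactly the route the paper takes: the paper proves this corollary by invoking "the arguments from the proof of Theorem \ref{thm1}" together with the one-barrier formula \eqref{eq5.7}, which is precisely your reduction to the deterministic lower-barrier Skorokhod problem via $H$ and $H^{-1}$. Your write-up simply makes explicit the verification steps that the paper leaves implicit.
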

Note that
\[(\bar l_s-EY_s)^+=\inf\{x\geq0;E h(s,Y_s+x)\geq l_s\},\quad
s\in\Rp,\] which means that our formula for $k$ coincides with the
formula considered in \cite{briand:2018,briand:2018a}.

\begin{definition}
\label{def3}
Let $Y$ be an $({\cal F}_t)$-adapted process with trajectories in $\D$,
 $u\in\D$,  and let$h:\Rp\times\R\rightarrow \R$
satisfy (\ref{eq2.1}) and  $Eh(0,Y_0)\leq u_0$. We say that a pair
$(X,k)\in{\cal D}\times\D$  with $k_0=0$ is a solution to the
Skorokhod problem (with mean minimality condition and a upper
barrier) associated with $h,Y,u$  ($(X,k)=\SPu{Y}$ for short) if
\begin{enumerate}[\bf(i)]
\item $X_t=Y_t+k_t$,
\item $Eh(t,X_t)\leq u_t,\quad t\in\Rp$,
\item $k$ is nonincreasing and
\[\int_{0}^t [u_s-Eh(s,X_s)]\,dk_s=0,\quad t\in\Rp.
\]
\end{enumerate}
\end{definition}
By the arguments from the proof of Theorem \ref{thm1} and
(\ref{eq5.8}) we obtain the the following counterpart to Corollary
\ref{cor3}.

\begin{corollary}
Assume \mbox{\rm{(H)}}.  If $Y=(Y_t)\in{\cal D}$, $u\in\D$ and
$Eh(0,Y_0)\leq u_0$, then there exists a unique solution of the
Skorokhod problem $(X,k)=\SPu{Y}$. Moreover,
\begin{equation}\label{eq2.11}
k_t=-\sup_{s\leq t}(\bar u_s-EY_s)^-,\quad t\in\Rp,
\end{equation}
where  $\bar u_t=H^{-1}(t,u_t,Y_t)$, $t\in\Rp$. \label{cor4}
\end{corollary}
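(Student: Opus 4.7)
The plan is to mirror the argument of Theorem~\ref{thm1}, adapted to the one-sided upper-barrier case, and to invoke the deterministic counterpart identified with formula~(\ref{eq5.8}). First I would set $y_t=EY_t$ and note, exactly as in the proof of Theorem~\ref{thm1}, that $y\in\D$, and that by Lemma~\ref{lem1} also $\bar u = H^{-1}\circ u\in\D$. Since $H^{-1}(t,\cdot,Y_t)$ is strictly increasing, the initial condition $Eh(0,Y_0)\leq u_0$ translates to $EY_0 = H^{-1}(0,Eh(0,Y_0),Y_0)\leq \bar u_0$, so the deterministic upper-barrier Skorokhod problem for $y$ against $\bar u$ is well posed.

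Next I would apply the deterministic existence/uniqueness theorem from the Appendix, yielding a unique pair $(x,k)$ with $x_t=y_t+k_t\leq \bar u_t$, $k$ nonincreasing, and satisfying the support property $\int_0^t(\bar u_s-x_s)\,dk_s=0$; the explicit formula~(\ref{eq5.8}) gives
\[
k_t = -\sup_{s\leq t}(\bar u_s - y_s)^- = -\sup_{s\leq t}(\bar u_s - EY_s)^-,
\]
which is precisely~(\ref{eq2.11}). Setting $X_t = Y_t+k_t$, verification of the conditions of Definition~\ref{def3} is routine: condition (i) holds by construction, and condition (ii) follows from the strict monotonicity of $H$ via
\[
Eh(t,X_t) = H(t, y_t+k_t, Y_t) = H(t,x_t,Y_t)\leq H(t,\bar u_t,Y_t) = u_t.
\]
The same monotonicity yields the equivalence $x_s=\bar u_s$ iff $Eh(s,X_s)=u_s$, so the deterministic support condition transfers directly to condition (iii) of Definition~\ref{def3}.

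For uniqueness, given any other solution $(X',k')=\SPu{Y}$, I would define $x'_t=H^{-1}(t,Eh(t,X'_t),Y_t)$ and verify, using the same monotonicity equivalences, that $(x',k')$ solves the deterministic upper-barrier Skorokhod problem for input $y$ and barrier $\bar u$. Uniqueness of the deterministic problem forces $(x',k')=(x,k)$, hence $X'=Y+k=X$. I do not anticipate any serious obstacle: the proof is essentially a transcription of the argument for Corollary~\ref{cor3}, the only care required being sign bookkeeping, with the negative part and the nonincreasing nature of $k$ replacing the lower-barrier analogues.
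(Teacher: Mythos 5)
Your proposal is correct and follows exactly the route the paper takes: the paper proves this corollary by citing ``the arguments from the proof of Theorem \ref{thm1}'' together with the deterministic one-barrier formula (\ref{eq5.8}), which is precisely the reduction to the deterministic upper-barrier Skorokhod problem for $y=EY$ against $\bar u=H^{-1}\circ u$ that you carry out. The monotonicity transfer of conditions (ii)--(iii) and the uniqueness argument via $x'_t=H^{-1}(t,Eh(t,X'_t),Y_t)$ match the paper's intended argument.
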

One can note that
\[(\bar u_s-EY_s)^-=\inf\{x\geq0;E h(s,Y_s-x)\leq u_s\},\quad
s\in\Rp.
\]

\section{Stability of solutions}

In this section,  we consider a sequence of processes $\{Y^n\}$
such that for every $q\in\Rp$,
\begin{equation}
\label{eq3.1} \{Y^n_t;\, t\leq q,\,n\in\N\}\,\,\mbox{\rm is
uniformly integrable.}
\end{equation}
In the sequel, we will use the  notion of the convergence in $\Dd$
for different $d\in\N$. We recall that
$(x^{n,1},...,x^{n,d})\lra(x^1,...,x^d)$ in  $\Dd$  if
for  every $t\in\Rp$  there exists a  sequence $t_n\to t$ such
that  for all $t'_n\to t$ and $t''_n\to t$ satisfying
$t'_n<t_n\leq t''_n$, $ n\in\N$, we have
\begin{equation}\label{eq3.2} x^{n,i}_{t'_n}
\lra x^i_{t-}\quad\mbox{\rm and} \quad x^{n,i}_{t''_n}\lra
x^i_t,\,\,i=1,\dots,d.
\end{equation}
Note that (\ref{eq3.2}) implies in particular  that  in the case
of the jump in $t\in\Rp$ in the limit  there exists a common
sequence $t_n\to t$  such that $\Delta x^{n,i}_{t_n}\lra\Delta
x^i_t$, $i=1,\dots,d$.

\begin{lemma}
Assume  \mbox{\rm{(H)}} and  \mbox{\rm(\ref{eq3.1})}. Let $\{\bar
z^n\}\subset\D$ and $z^n=(z^n_t=H(t,\bar z^n_t,Y^n_t))$, $n\in\N$.
If $(Y^n,y^n,\bar z^n)\arrowd (Y,y,\bar z)$  in $\DDD$,  then
\[ (Y^n,y^n,\bar z^n,z^n)\arrowd (Y,y,\bar z,z)   \quad in\,\,\DDDD.\]
Similarly, let  $\{z^n\}\subset\D$ and $\bar z^n=(\bar
z^n_t=H^{-1}(t,z^n_t,Y^n_t))$, $n\in\N$. If $(Y^n,y^n, z^n)\arrowd
(Y,y, z)$  in $\DDD$,  then
\[ (Y^n,y^n, z^n,\bar z^n)\arrowd (Y,y,z,\bar z)   \quad in\,\,\DDDD.\] \label{lem2}
\end{lemma}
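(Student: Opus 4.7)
The plan is to derive the joint $J_1$-convergence of $(Y^n,y^n,\bar z^n,z^n)$ from the given convergence of the first three coordinates, exploiting that $z^n_t=H(t,\bar z^n_t,Y^n_t)=Eh(t,Y^n_t-y^n_t+\bar z^n_t)$ is a deterministic function of $t$ depending only on the law of $Y^n_t$ and on the deterministic values $y^n_t$ and $\bar z^n_t$. Using the characterization (\ref{eq3.2}) of joint $J_1$-convergence in $\DDDD$, I would first fix $t\in\Rp$ and, after passing to a Skorokhod representation that makes $Y^n\to Y$ a.s.\ in $\D$, select a sequence $t_n\to t$ witnessing the joint convergence of $(Y^n,y^n,\bar z^n)$: for every $t'_n<t_n\le t''_n$ with $t'_n,t''_n\to t$, the three coordinates converge (jointly, in distribution for the first and deterministically for the other two) to the corresponding left and right limits at $t$.

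For this $t_n$ the task is to verify that $z^n_{t''_n}\to z_t$ and $z^n_{t'_n}\to z_{t-}$. Writing $z^n_{t''_n}=Eh(t''_n,Y^n_{t''_n}-y^n_{t''_n}+\bar z^n_{t''_n})$, the joint Lipschitz continuity of $h$ supplied by (H) via (\ref{eq2.2})--(\ref{eq2.3}), together with the just-chosen convergent sequences, yields $h(t''_n,Y^n_{t''_n}-y^n_{t''_n}+\bar z^n_{t''_n})\arrowd h(t,Y_t-y_t+\bar z_t)$ by the continuous mapping theorem. Next, the linear growth bound (\ref{eq2.1}), the uniform integrability assumption (\ref{eq3.1}), and the boundedness on $[0,q]$ of the deterministic sequences $\{y^n\}$ and $\{\bar z^n\}$ (inherited from their $\D$-convergence) together imply that the family $\{h(t''_n,Y^n_{t''_n}-y^n_{t''_n}+\bar z^n_{t''_n})\}_n$ is uniformly integrable. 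Therefore the expectations converge and $z^n_{t''_n}\to Eh(t,Y_t-y_t+\bar z_t)=H(t,\bar z_t,Y_t)=z_t$; the argument for $z^n_{t'_n}\to z_{t-}$ is identical.

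The second statement would be proved by the same scheme applied with $\bar z^n_t=H^{-1}(t,z^n_t,Y^n_t)$, the key ingredient being continuity of $H^{-1}$ in all three arguments. The lower bound in (\ref{eq2.3}) gives the Lipschitz estimate $c_h^1|\bar z-\bar z'|\le|H(t,\bar z,Y)-H(t,\bar z',Y)|$ which controls the dependence of $H^{-1}$ on $z$, while an argument of the type used in the proof of Lemma \ref{lem1} (combining strict monotonicity of $h$ with the two-sided bound (\ref{eq2.3}), and uniform integrability via (\ref{eq2.1}) and (\ref{eq3.1})) handles the dependence on the distribution of $Y^n_t$. The main technical obstacle in both directions is verifying that a single deterministic sequence $t_n$ drawn from the convergence of the three-component vector does witness the paired convergence of the fourth coordinate; this works because the jumps of the deterministic function $z^n$ can originate only from jumps of $\bar z^n$ or from discontinuities of the one-dimensional marginals of $Y^n$, both of which are already captured by the joint $J_1$-convergence of $(Y^n,y^n,\bar z^n)$.
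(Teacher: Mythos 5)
Your proposal is correct and follows essentially the same route as the paper: reduce to the pointwise characterization (\ref{eq3.2}) along a witness sequence $t_n$, pass to the limit in $z^n_{t_n''}=Eh(t_n'',Y^n_{t_n''}-y^n_{t_n''}+\bar z^n_{t_n''})$ using continuity of $h$ together with uniform integrability from (\ref{eq2.1}) and (\ref{eq3.1}), and for the inverse direction combine a boundedness argument for $\{\bar z^n\}$ with subsequence extraction and identification of the limit through the defining equation and strict monotonicity, exactly as in the paper's proof (which reuses the argument of Lemma \ref{lem1}). Your explicit appeal to uniform integrability to pass from convergence in distribution to convergence of expectations is, if anything, a slightly more careful justification than the paper's bare invocation of dominated convergence.
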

\begin{proof}
Assume  (\ref{eq3.2})  for the sequence $\{(Y^n,y^n,\bar z^n)\}$,
i.e. assume that for  every $t\in\Rp$  there exists a  sequence
$t_n\to t$ such that for all $t'_n\to t$, $t''_n\to t$ satisfying
$t'_n<t_n\leq t''_n$, $ n\in\N$, we have
\[
(Y^{n}_{t'_n},y^n_{t'_n},\bar z^n_{t'_n})\arrowd (Y_{t-},y_{t-},\bar z_{t-})\quad\mbox{\rm and}\quad
(Y^{n}_{t''_n},y^n_{t''_n},\bar z^n_{t''_n})\arrowd
(Y_{t},y_{t},\bar z_{t}).
\]
We will check  that  the same
condition holds true for the sequence $\{(Y^n,y^n,\bar
z^n,z^n)\}$. Fix $t\in\Rp$  and assume that  there exists a
sequence $\{r_n\}$ such that  $r_n\to t$ and
\[(Y^n_{r_n},y^n_{r_n},\bar z^n_{r_n})\arrowd (Y',y',\bar
z')\quad\mbox{\rm in}\,\, \R^3. \] By the  Lebesgue dominated
convergence theorem,
\[
z^n_{r_n}=Eh(r_n,Y^n_{r_n}-EY^n_{r_n}+\bar z^n_{r_n})
\lra Eh(t,Y'-EY'+\bar z').
\]
Clearly  the limit is  equal to $z_t$ if  $r_n\geq t_n$ for all
sufficiently large $n$  (resp. $z_{t-}$ if $r_n<t_n$ for all
sufficiently large  $n$).

To prove the second part of the lemma we first show  that for
every $q\in\Rp$ the sequence $\{\sup_{t\leq q}|\bar z^n_t)|\}$ is
bounded. To get contradiction, suppose that there is a sequence
$r_n\leq q$ such that $\bar z^n_{r_n}\nearrow+\infty $ (resp.
$\searrow-\infty$). Then there exists a subsequence
$(n')\subset(n)$ such  that $r_{n'}\to t$ and
$Y^{n'}_{r_{n'}}\arrowd Y$ for some $t\le q$ and random variable
$Y$. Consequently,
\[ z^{n'}_{r_{n'}}
=Eh(r_{n'},Y^{n'}_{r_{n'}}-EY^{n'}_{r_{n'}} +\bar
z^{n'}_{r_{n'}})\lra+\infty \mbox{ (resp.}\,\,-\infty),
\]
which contradicts the fact that  $\{z^{n'}_{r_{n'}}\}$ has two
possible limit  points $z_t$  and $z_{t-}$. Now we  assume
(\ref{eq3.2})  for the sequence $\{(Y^n,y^n, z^n)\}$. We are going
to check  that  the same condition holds true for $\{(Y^n,y^n,
z^n,\bar z^n)\}$.  Let $\{r_n\}$ be such that $r_n\to t\leq q$.
First we assume that   $r_n\geq t_n$ for all sufficiently large
$n$. In this case $Y^{n}_{r_{n}}\arrowd Y_t$ and $z^n_{r_n}\to
z_t$. Moreover,  there is a finite $z'$ such that for some
subsequence $(n')\subset(n)$, $\bar z_{r_{n'}}\to z'$. Then using
the Lebesgue dominated convergence theorem  shows that $z'$
satisfies the equation
\[z_t=Eh(t,Y_t-EY_t+z').\]
Since the solution of the above equation is unique, by the same
arguments as above we check that in fact $z'$ is the limit of the
sequence $\{\bar z^n_{r_n}\}$. Hence $\bar z^n_{r_n}\to \bar z_t$.
Similarly we show that  if $r_n<t_n$ for all sufficiently large
$n$, then $\bar z_{r_n}\lra \bar z_{t-}$ being a solution to the
equation $z_{t-}=Eh(t,Y_{t-}-EY_{t-}+\bar z_{t-})$.
\end{proof}

\begin{remark}
{\rm Since the sequences $\{z^n\}$, $\{\bar z^n\}$ are
deterministic, it is clear that in the statement of Lemma
\ref{lem2}  one can replace the convergence in law  by the
convergence in probability in $J_1$ or by the convergence $\Ppw$
in $J_1$. }\label{rem4}
\end{remark}

\begin{theorem}
Assume  \mbox{\rm({H})}.  Let $\{Y^n\}$ be a sequence of processes
satisfying \mbox{\rm(\ref{eq3.1})} and $\{l^n\},\{u^n\}$ be
sequences of c\`adll\`ag functions such that $l^n\leq u^n$,
$l^n_0\leq Eh(0,Y_0)\leq u^n_0$, $n\in\N$. Let  $(X^n,
k^n)=\mathbb{SP}^{u^n}_{l^n}(h,Y^n)$, $n\in\N$.
\begin{enumerate}[\bf(i)]
\item If $(Y^n,y^n,l^n,u^n)\arrowd (Y,y,l,u)$ in $\DDDD$, then
\[(X^n,k^n,Y^n,y^n,l^n,u^n)\arrowd (X,k,Y,y,l,u)
\quad\mbox{ in}\,\,\Ddddddd,\]
\item if $(Y^n,y^n,l^n,u^n)\arrowp (Y,y,l,u)$ in $\DDDD$, then
\[(X^n,k^n,Y^n,y^n,l^n,u^n)\arrowp (X,k,Y,y,l,u)
\quad\mbox{ in}\,\,\Ddddddd,
\]
\item if $(Y^n,y^n,l^n,u^n)\lra (Y,y,l,u)$ $\Ppw$ in $\DDDD$, then
\[(X^n,k^n,Y^n,y^n,l^n,u^n)\lra(X,k,Y,y,l,u)\quad \Ppw \mbox{ in}\,\,\Ddddddd,\]
\end{enumerate}
where $(X, k)=\mathbb{SP}^{u}_{l}(h,Y)$.\label{thm2}
\end{theorem}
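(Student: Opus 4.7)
The plan is to reduce the stability of the mean-reflected Skorokhod problem to the (known) continuity of the deterministic Skorokhod map via the representation obtained in the proof of Theorem \ref{thm1}. Recall from there that $(X^n, k^n) = \SPp{Y^n}{l^n}{u^n}$ is characterized by
\[(x^n, k^n) = SP_{\bar l^n}^{\bar u^n}(y^n), \qquad X^n_t = Y^n_t + k^n_t,\]
with $y^n_t = EY^n_t$, $\bar l^n_t = H^{-1}(t, l^n_t, Y^n_t)$ and $\bar u^n_t = H^{-1}(t, u^n_t, Y^n_t)$. Crucially, $y^n$, $\bar l^n$, $\bar u^n$, and hence $k^n$, are all deterministic.

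For part (i), starting from $(Y^n, y^n, l^n, u^n) \arrowd (Y, y, l, u)$, I would apply Lemma \ref{lem2} (the second half, with $z^n = l^n$ and $z^n = u^n$) to extend this to
\[(Y^n, y^n, l^n, u^n, \bar l^n, \bar u^n) \arrowd (Y, y, l, u, \bar l, \bar u).\]
Since $y^n, \bar l^n, \bar u^n$ are deterministic, this gives the $J_1$-convergence $(y^n, \bar l^n, \bar u^n) \to (y, \bar l, \bar u)$ in $\DDD$. The continuity of the deterministic two-barrier Skorokhod map (Theorem \ref{thm5.1}) then yields $k^n \to k$ deterministically in $\D$. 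A deterministically convergent sequence can always be adjoined to a convergence in law, so
\[(Y^n, y^n, l^n, u^n, \bar l^n, \bar u^n, k^n) \arrowd (Y, y, l, u, \bar l, \bar u, k).\]
Finally, since $X^n = Y^n + k^n$, the continuity of the addition functional at points of joint $J_1$-convergence yields the claimed six-dimensional convergence in $\Ddddddd$.

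Parts (ii) and (iii) follow by exactly the same argument. Remark \ref{rem4} ensures that Lemma \ref{lem2} remains valid when convergence in law is replaced by convergence in probability in $J_1$ or by $\Ppw$ convergence in $J_1$, while the continuity of the deterministic Skorokhod map is a pathwise statement that transfers immediately to both modes.

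The main technical delicacy is the continuity of the map $(a,b)\mapsto a+b$ in the Skorokhod topology, which can fail when two components jump simultaneously with incompatible approximating sequences. In our setting this is not an obstruction: the jumps of $k^n$ occur only at jumps of $y^n$, $\bar l^n$ or $\bar u^n$, which are all deterministic, and the joint convergence together with Lemma \ref{lem2} furnishes the common approximating sequences required by condition (\ref{eq3.2}) for all coordinates at once. With this in hand, $X^n = Y^n + k^n$ passes to the limit jointly with the remaining coordinates, completing all three parts.
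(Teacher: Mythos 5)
Your proof is correct and follows essentially the same route as the paper: apply Lemma \ref{lem2} to upgrade the hypothesis to joint convergence of $(Y^n,y^n,l^n,\bar l^n,u^n,\bar u^n)$, invoke the $J_1$-stability of the deterministic Skorokhod map (the paper cites \eqref{eq5.4}, which rests on the Lipschitz estimates of Theorem \ref{thm5.2} rather than Theorem \ref{thm5.1}, but this is only a citation detail) to adjoin $(x^n,k^n)$, and recover $X^n=Y^n+k^n$ via the common approximating time sequences built into \eqref{eq3.2}, with Remark \ref{rem4} handling parts (ii) and (iii).
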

\begin{proof}By Lemma \ref{lem2}  and (\ref{eq3.2}) it is clear that
\[(Y^n,y^n,l^n,\bar l^n,u^n,\bar u^n)\arrowd (Y,y,l,\bar l,u, \bar u)\quad\mbox{\rm in}\,\,\Ddddddd.\]
By this  and (\ref{eq5.4}),
\[
(x^n,k^n,Y^n,y^n,l^n,\bar l^n,u^n,\bar u^n) \arrowd
(x,k,Y,y,l,\bar l,u, \bar u)\quad\mbox{\rm in}\,\,\Ddddddddd,\]
where $(x^n,k^n)=SP_{\bar l^n}^{\bar u^n}(y^n)$, $n\in\N$  and
$(x,k)=SP_{\bar l}^{\bar u}(y)$, Since $X^n=Y^n+k^n$  and $X=Y+k$,
assertion (i) easily follows. To prove (ii) and (iii) it is
sufficient to use Remark \ref{rem4} and previously used  arguments
with the convergence in probability in $J_1$ or  $\Ppw$ in $J_1$
in place of the convergence in law.
\end{proof}

In the above theorem joint convergence of $\{(Y^n,y^n)\}$ in $\DD$
is assumed. This assumption  is much stronger than the convergence
of the initial sequence  $\{Y^n\}$.
 If $Y^n\arrowd Y$ in $\D$,  then it is clear  only that $y^n_t=EY^n_t\to
y_t=EY_t$ provided that $E|\Delta Y_t|=0$. Unfortunately, the
functional convergence $y^n\to y$ in $\D$ need not  hold.

\begin{example}
\label{ex1} Let $V$ be an arbitrary nondeterministic random
variable. For some $c>0$  put $Y^n_t={\bf 1}_{\{t\geq c+V/n\}}$,
$t\in\Rp$, $n\in\N$. Then of course $Y^n\lra Y$ $\Ppw$ in $\D$,
where $Y_t={\bf 1}_{\{t\geq c\}}$, $t\in\Rp$. On the other hand
$y^n$ do not tend to $y$ in $\D$ because  there in no sequence
$t_n\to c$ such that $\Delta y^n_{t_n}\to\Delta y_c=1$.
\end{example}

However, in important cases one  can deduce joint convergence  of
$\{(Y^n,y^n)\}$ in $\DD$
 from the convergence of $\{Y^n\}$ in $\D$.
\begin{proposition}\label{prop4}
Let $\{Y^n\}$ be a sequence of processes satisfying
\mbox{\rm(\ref{eq3.1})} and such that $Y^n\arrowd Y$ in $\D$. If
$P(\Delta Y_t=0)=1$, $t\in\Rp$, or $\{Y^n\}$ is a sequence of
processes with independent increments, then
\begin{equation}\label{eq3.3} (Y^n,y^n)\arrowd(Y,y)\quad\mbox{ in}\,\,\DD.
\end{equation}
\end{proposition}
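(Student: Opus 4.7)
The plan is to reduce the joint convergence $(Y^n,y^n)\arrowd(Y,y)$ in $\DD$ to establishing the purely deterministic convergence $y^n\to y$ in $\D$. Once the deterministic convergence is in hand, and provided the discontinuities of $y$ only occur at times compatible with the jump behaviour of $Y$, one can combine an almost-sure Skorokhod representation of $\{Y^n\}$ with the deterministic sequence $\{y^n\}$ using a common time change, yielding joint $J_1$ convergence in $\DD$. The pointwise ingredient is the standard fact that at every $t$ with $P(\Delta Y_t=0)=1$ the evaluation map $x\mapsto x_t$ on $\D$ is a.s.\ continuous at $Y$, so $Y^n_t\arrowd Y_t$, which together with the uniform integrability assumption (\ref{eq3.1}) upgrades to $y^n_t\to y_t$.

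In the first case, where $P(\Delta Y_t=0)=1$ for every $t$, uniform integrability also forces $y$ to be continuous on $\Rp$. To promote pointwise convergence to uniform convergence on compact sets, I would argue by contradiction: if for some $\varepsilon>0$ and $q>0$ there exist $t_n\in[0,q]$ with $|y^n_{t_n}-y_{t_n}|\ge\varepsilon$, extracting a convergent subsequence $t_n\to t$ gives $Y^n_{t_n}\arrowd Y_t$ (because $t$ is a continuity point of $Y$ in law), hence $y^n_{t_n}\to y_t$ by uniform integrability, while $y_{t_n}\to y_t$ by continuity of $y$ -- a contradiction. Thus $y^n\to y$ uniformly on compacts, hence in $\D$; because the limit $y$ is continuous, joint $J_1$ convergence in $\DD$ is then immediate.

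In the second case the limit $Y$ is itself a process with independent increments, so $y$ is c\`adl\`ag with at most countably many jumps, located precisely at the fixed discontinuities of $Y$. The pointwise step gives $y^n_t\to y_t$ at every continuity point of $y$. At a fixed discontinuity $t_0$ of $Y$, the $J_1$ convergence $Y^n\arrowd Y$ provides sequences $s_n<t_n$ with $s_n,t_n\to t_0$ such that $Y^n_{s_n}\arrowd Y_{t_0-}$ and $Y^n_{t_n}\arrowd Y_{t_0}$, whence $Y^n_{t_n}-Y^n_{s_n}\arrowd\Delta Y_{t_0}$; uniform integrability promotes this to $y^n_{t_n}-y^n_{s_n}\to\Delta y_{t_0}$, while the independence-of-increments structure lets one concentrate this mean increment at a single jump of $y^n$ close to $t_0$. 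Combining pointwise convergence at continuity points with alignment of the countably many jumps yields $y^n\to y$ in $\D$, and since the discontinuities of $y$ sit at the fixed discontinuities of $Y$ the time changes realising the Skorokhod convergence of $Y^n$ extend to $y^n$.

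The main obstacle is precisely this second case: one must rule out ``moving'' jumps of $y^n$ that do not match any jump of $y$ and verify that the expected jump at each fixed discontinuity of $Y$ is genuinely concentrated at a single time of $y^n$. The PII hypothesis is essential here, as Example \ref{ex1} shows that without such structure the deterministic convergence $y^n\to y$ in $\D$ can fail even when $Y^n\to Y$ $\Ppw$ in $\D$ and the family is uniformly integrable.
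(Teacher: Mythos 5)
Your treatment of the first case ($P(\Delta Y_t=0)=1$ for all $t$) is correct and essentially identical to the paper's: uniform integrability gives $\Delta y_t=E\Delta Y_t=0$ so $y$ is continuous, the a.s.\ continuity of the evaluation maps gives $y^n_{t_n}\to y_t$ for every $t_n\to t$, and this yields locally uniform convergence of $y^n$ to the continuous limit $y$, from which the joint convergence in $\DD$ is immediate.

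The second case, however, contains a genuine gap, and you have in fact named it yourself without closing it: ruling out ``moving'' or ``split'' jumps of $y^n$, i.e.\ showing that the expected jump of $Y$ at a fixed discontinuity $t_0$ is carried by a \emph{single} jump time of $y^n$ near $t_0$ rather than being spread over several nearby jumps (which would destroy $J_1$ convergence of the deterministic sequence $\{y^n\}$ even though each individual increment converges). Saying that ``the independence-of-increments structure lets one concentrate this mean increment at a single jump of $y^n$'' is an assertion, not an argument; pointwise convergence at continuity points plus convergence of increments across each fixed discontinuity does not imply convergence in $\D$. The paper closes exactly this gap by invoking Jacod's theorem \cite[Theorem 1.21]{jacod}: for processes with independent increments, $Y^n\arrowd Y$ in $\D$ is \emph{equivalent} to convergence of the triplet of characteristics $(B^{h,n},\tilde C^{h,n},f*\nu^n)$ in the Skorokhod topology, and the first characteristic satisfies $B^{h,n}_t=EY^{h,n}_t$ where $Y^{h,n}$ is $Y^n$ with large jumps truncated. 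Thus the functional ($J_1$) convergence $B^{h,n}\to B^h$ comes for free from the hypothesis, and it is upgraded to $y^n\to y$ in $\D$ via the uniform-integrability estimate $\lim_{a\to\infty}\limsup_n\sup_{t\leq q}E|J^{n,a}_t|=0$ controlling the truncated large-jump part; the same time sequences along which the characteristics converge then realize the joint convergence \eqref{eq3.3}. Without this input (or an equivalent substitute), your sketch of the PII case is a plan rather than a proof; note that a degenerate example such as $Y^n_t=\xi^n_1\mathbf{1}_{\{t\geq t_0-1/n\}}+\xi^n_2\mathbf{1}_{\{t\geq t_0+1/n\}}$ shows that the concentration of the mean jump is precisely what fails when $Y^n\arrowd Y$ in $\D$ fails, so the implication you need is genuinely of the same depth as Jacod's equivalence.
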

\begin{proof} First assume that   $P(\Delta Y_t=0)=1$, $t\in\Rp$.
We will show that in this case  $y^n$ tends to $y$ uniformly  on
compact sets. By (\ref{eq3.1}), for every $t\in\Rp$, $\Delta
y_t=E\Delta Y_t=0$, which  means that the map $t\mapsto y_t$ is
continuous. Therefore, in order to finish the proof of
(\ref{eq3.3}), it is sufficient to observe that  for every $t$ and
every sequence $\{t_n\}$ such that $t_n\to t$,
\[y^n_{t_n}=EY^n_{t_n}\lra EY_t=y_t.\]
The above property is  a consequence of the fact that
$Y^n_{t_n}\arrowd Y_t$, which holds true provided that  $P(\Delta
Y_t=0)=1$.

To prove the second case we will use  Jacod's theorem
\cite[Theorem 1.21]{jacod} which  says that the convergence
$Y^n\arrowd Y$ in $\D$ is equivalent to the joint convergence of
their characteristics $(B^{h,n},\tilde
C^{h,n},f*\nu^n)\lra(B^h,\tilde C^h,f*\nu)$ in $\DDD$, $f\in C_s$.
It is important here that $B^{h,n}_t=EY^{h,n}_t$, $t\in\Rp$,
$n\in\N$ and $h:\R\to\R$ is a continuous function depending on the
parameter  $a>0$ such that $|h|\leq a$  and if $|x|\leq a/2$
(resp. $|x|\geq a$) then $h(x)=x$  (resp. $h(x)=0$)  and
\[Y^{h,n}_t=Y^n_t-\sum_{s\leq t}(\Delta Y^n_s-h(\Delta Y_s))
=Y^n_t-J^{n,a}_t,\quad t\in\Rp,\,n\in\N.\] Since $\{Y^{h,n}\}$ is
a sequence of weakly convergent processes with independent
increments with bounded jumps, the family of random variables
$\{\sup_{t\leq q}|Y^{h,n}_t|\}$ is uniformly integrable. By this
and (\ref{eq3.1}),
$\lim_{a\to\infty}\limsup_{n\to\infty}\sup_{t\leq
q}E|J^{n,a}_t|=0$. Since $y^n_t=B^{h,n}_t+EJ^{n,a}_t$, from
\cite[Theorem 1.21]{jacod} we deduce that $y^n\lra y$ in $\D$.  To
prove (\ref{eq3.3}) it is sufficient to observe that for every
$t\in\Rp$ and every  sequence $\{t_n\}$ such that $t_n\to t$ and
\[(B^{h,n}_{t_n},\tilde C^{h,n}_{t_n},f*\nu^n_{t_n})\lra(B^h_t,\tilde C^h_t,f*\nu_t),\quad (B^{h,n}_{t_n-},\tilde C^{h,n}_{t_n-},f*\nu^n_{t_n-})
\lra(B^h_{t-},\tilde C^h_{t-},f*\nu_{t-})\]
the convergences $Y^n_{t_n}\arrowd Y_t$ and $Y^n_{t_n-}\arrowd Y_{t-}$ also hold true.
\end{proof}

\begin{corollary}\label{cor5}
Let $\{Y^n\}$ be a sequence of processes satisfying
\mbox{\rm(\ref{eq3.1})} and such that $Y^n\arrowd Y$ in $\D$. If
for every $q$ and all sequences $\{s_n\}$, $\{r_n\}$ such that
$0\leq s_n,r_n\leq q$ and $s_n-r_n\to0$ we have
\begin{equation}\label{eq3.4}Y^n_{s_n}-Y^n_{r_n}\arrowp0,
\end{equation}
then \mbox{\rm(\ref{eq3.3})} holds true.
\end{corollary}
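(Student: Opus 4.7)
The plan is to reduce Corollary \ref{cor5} to the first case of Proposition \ref{prop4} by verifying that $P(\Delta Y_t=0)=1$ for every $t\in\Rp$; then \eqref{eq3.3} follows immediately from that proposition.

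Fix $t>0$. I would first apply the Skorokhod representation theorem to the convergence $Y^n\arrowd Y$ in $\D$, obtaining processes $\widetilde Y^n\stackrel{\mathcal{D}}{=}Y^n$ and $\widetilde Y\stackrel{\mathcal{D}}{=}Y$ on a common probability space with $\widetilde Y^n\to\widetilde Y$ $P$-a.s.\ in $J_1$. The key observation is that \eqref{eq3.4} depends only on the one-dimensional laws of $Y^n_{s_n}-Y^n_{r_n}$ and therefore transfers verbatim to $\widetilde Y^n$. Fixing $q>t$, almost surely there exist continuous strictly increasing time changes $\lambda^n\colon[0,q]\to[0,q]$ with $\varepsilon_n:=\sup_{s\leq q}|\lambda^n(s)-s|\to 0$ and $\widetilde Y^n\circ\lambda^n\to\widetilde Y$ uniformly on $[0,q]$; writing $\mu^n=(\lambda^n)^{-1}$ also gives $\sup_{s\leq q}|\mu^n(s)-s|\leq\varepsilon_n$.

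The central step is to pick deterministic windows $[t-\delta_n,t+\delta_n]$ with $\delta_n\to 0$ slowly enough that the random $\mu^n$ straddles $t$ between $t\pm\delta_n$. Since $\varepsilon_n\to 0$ in probability, I would extract a subsequence (still denoted by $n$) such that $\sum_n P(\varepsilon_n>2^{-n})<\infty$, and set $\delta_n:=2^{1-n}$. Borel--Cantelli then gives $\varepsilon_n\leq\delta_n/2$ eventually $P$-a.s., which forces $\mu^n(t-\delta_n)<t<\mu^n(t+\delta_n)$ with $\mu^n(t\pm\delta_n)\to t^\pm$. Combining this with the uniform convergence $\widetilde Y^n\circ\lambda^n\to\widetilde Y$ and the c\`adl\`ag property of $\widetilde Y$ (right-continuity at $t$, left-limit at $t^-$), I would deduce $\widetilde Y^n_{t-\delta_n}\to\widetilde Y_{t-}$ and $\widetilde Y^n_{t+\delta_n}\to\widetilde Y_t$ $P$-a.s., hence $\widetilde Y^n_{t+\delta_n}-\widetilde Y^n_{t-\delta_n}\to\Delta\widetilde Y_t$ a.s.

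On the other hand, \eqref{eq3.4} applied to the deterministic pair $r_n=t-\delta_n,\ s_n=t+\delta_n$ (with $s_n-r_n=2\delta_n\to 0$) forces the same difference to $0$ in probability. Uniqueness of the limit yields $\Delta\widetilde Y_t=0$ $P$-a.s., i.e.\ $P(\Delta Y_t=0)=1$; since $t$ was arbitrary, Proposition \ref{prop4} applies and gives \eqref{eq3.3}. I expect the main obstacle to lie in this scale-matching: \eqref{eq3.4} supplies information only along deterministic time sequences while the Skorokhod time changes $\lambda^n$ are random, and the Borel--Cantelli trick is precisely what aligns the two scales so that the deterministic window of size $\delta_n$ dominates the random fluctuation $\varepsilon_n$.
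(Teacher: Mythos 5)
Your proof is correct, but it takes a genuinely different route from the paper's. The paper's argument (which is only sketched in one line) stays at the level of the mean function: it uses \eqref{eq3.4} together with the uniform integrability \eqref{eq3.1} to show that $\Delta y_t=E\Delta Y_t=0$ for every $t$ and that $y^n_{t_n}\to y_t$ for \emph{every} sequence $t_n\to t$ (the difference $Y^n_{t_n}-Y^n_{s_n}$ between an arbitrary time sequence and a ``matching'' one from the $J_1$ convergence tends to $0$ in probability, hence in $\Ll^1$), so that $y^n\to y$ locally uniformly with $y$ continuous and the joint convergence \eqref{eq3.3} follows exactly as in the first case of Proposition \ref{prop4}. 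You instead prove the strictly stronger pathwise statement $P(\Delta Y_t=0)=1$ for all $t$, which lets you invoke Proposition \ref{prop4} verbatim as a black box; the price is heavier machinery (Skorokhod representation plus the Borel--Cantelli alignment of the deterministic window $\delta_n$ with the random time-change error $\varepsilon_n$). Your scale-matching step is sound: after passing to a subsequence with $\varepsilon_n\le\delta_n/2$ eventually a.s.\ one indeed gets $\widetilde Y^n_{t+\delta_n}-\widetilde Y^n_{t-\delta_n}\to\Delta\widetilde Y_t$ a.s., and \eqref{eq3.4} still applies along the subsequence (embed the chosen times into a full sequence by setting $s_n=r_n=t$ off the subsequence). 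Two cosmetic remarks: the conclusion $P(\Delta Y_t=0)=1$ concerns only the law of $Y$, so the passage to a subsequence and to the representing space is harmless, as you implicitly use; and your argument shows that hypothesis \eqref{eq3.4} is in fact equivalent, given $Y^n\arrowd Y$, to the absence of fixed jumps of $Y$ together with a uniform-in-time asymptotic equicontinuity of the $Y^n$, which is a slightly sharper piece of information than the paper extracts.
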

\begin{proof}
It is sufficient to observe   (\ref{eq3.4}) implies that function
$y$ is  continuous.
\end{proof}

Condition (\ref{eq3.3}) is also satisfied  in the case where
$Y^n$, $n\in\N$, are  discretizations of $Y$.  This allows us to
approximate
  the solution  $(X,k)=\SP{Y}$ by simple
discretization method. As above set $y=(y_t=EY_t)$ and
$\rho^n_t=k/n$ for $t\in[k/n,(k+1)/n)$. Let $Y^n,y^n,l^n,u^n$ be
discretizations of $Y,y,l,u$, that is $Y^n_t=Y_{\rho^n_t}$,
$y^n_t=y_{\rho^n_t}$, $l^n_t=l_{\rho^n_t}$ and
$u^n_t=u_{\rho^n_t}$ $t\in\Rp$. Set
\begin{equation}\label{eq3.6}
\left\{\begin{array}{ll}
k^n_0&=0,\qquad X^n_0=Y_0,\\[2mm]
k^n_{{(k+1)}/{n}}&= \max\big[\min\big[k^n_{{k}/{n}},
\bar u^{n}_{(k+1)/{n}}-EY_{(k+1)/n}\big],
\bar l^n_{(k+1)/n}-EY_{(k+1)/n})\big],\\[2mm]
X^n_{{(k+1)}/{n}}&= Y_{(k+1)/n}+k^n_{(k+1)/n},
\end{array}
\right.
\end{equation}
where $\bar l_{(k+1)/n}=H^{-1}((k+1)/n,l_{(k+1)/n},Y_{(k+1)/n})$,  $\bar u_{(k+1)/n}=H^{-1}((k+1)/n,u_{(k+1)/n},Y_{(k+1)/n})$ and
 $k^n_t=k^n_{k/n}$, $X^n_t=X^n_{k/n}$ for $t\in[k/n,(k+1)/n)$, $k\in\N\cup\{0\}$, $n\in\N$.

\begin{theorem}\label{thm3}Assume  \mbox{\rm({H})}.
Let $Y\in{\cal D}$ and $l,u\in\D$ be such  that $l\leq u$ and
$l_0\leq Eh(0,Y_0)\leq u_0$. Let $\{Y^n\}$  and $\{l^n\}$,
$\{u^n\}$ be sequences of  discretizations of $Y$ and $l,u$,
respectively,  and $(X^n, k^n)$ be defined by
\mbox{\rm(\ref{eq3.6})}, $n\in\N$. Then  $(X^n,
k^n)=\mathbb{SP}^{u^n}_{l^n}(h,Y^n)$, $n\in\N$  and
\begin{enumerate}\item[{\bf (i)}]
$\displaystyle{(X^n,k^n,Y^n,y^n,l^n,u^n)\lra (X,k,Y,y,l,u)\quad
\Ppw\,\,\mbox{ in}\,\,\Ddddddd},$
\item[{\bf (ii)}] for every $q\in\Rp$,
\[\max_{k;\,k/n\leq q}(|X^n_{k/n}-X_{k/n}|+|k^n_{k/n}-k_{k/n}|)
\lra0\quad \Ppw,\]
\end{enumerate}
where $(X,k)=\SP{Y}$.
\end{theorem}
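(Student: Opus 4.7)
The plan has three main steps: first identify the recursion \eqref{eq3.6} as the Skorokhod problem for the discretized data, then use Theorem~\ref{thm2}(iii) to extract (i), and finally verify (ii) by hand, using the explicit formula \eqref{eq2.6} evaluated at grid points.

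For the first step I would observe that $Y^n, l^n, u^n$ are constant on each half-open interval $[k/n,(k+1)/n)$, so any solution of the Skorokhod problem associated with them is also constant there (no data change forces a move of $k^n$), and by Remark~\ref{rem_dwie}(b), formula \eqref{eq_kwzor} applied at the grid time $(k+1)/n$ gives
\[
k^n_{(k+1)/n}=\max\bigl(\min(k^n_{(k+1)/n-},\bar u^n_{(k+1)/n}-EY^n_{(k+1)/n}),\bar l^n_{(k+1)/n}-EY^n_{(k+1)/n}\bigr).
\]
Since $Y^n,l^n,u^n$ agree with $Y,l,u$ at grid points, $\bar u^n_{(k+1)/n}=\bar u_{(k+1)/n}$ and $\bar l^n_{(k+1)/n}=\bar l_{(k+1)/n}$, and $k^n_{(k+1)/n-}=k^n_{k/n}$ by piecewise constancy; this is exactly the update in \eqref{eq3.6}. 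Theorem~\ref{thm1} then confirms $(X^n,k^n)=\mathbb{SP}^{u^n}_{l^n}(h,Y^n)$.

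For part (i) I would first establish the $\Ppw$ joint convergence $(Y^n,y^n,l^n,u^n)\to(Y,y,l,u)$ in $\DDDD$. For each fixed $\omega$, every component is the discretization $f^n_t=f_{\rho^n_t}$ of a càdlàg function along the common mesh $\{k/n\}$; such discretizations converge in $J_1$ on compacts, and a common time change (the same step function $\rho^n$) handles the four components simultaneously. The fact that $y=(EY_t)\in\D$ needed to define $y^n$ follows from Lemma~\ref{lem1} applied with the trivial choice $\bar z\equiv 0$. Theorem~\ref{thm2}(iii) applied to the convergent data then yields (i) directly.

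For part (ii), because $Y^n_{k/n}=Y_{k/n}$ we have $|X^n_{k/n}-X_{k/n}|=|k^n_{k/n}-k_{k/n}|$, so it suffices to prove $\max_{k/n\leq q}|k^n_{k/n}-k_{k/n}|\to 0$ $\Ppw$. Here I would apply formula \eqref{eq2.6} to both $k$ and $k^n$: the expressions are identical functionals of $\inf$ and $\sup$ of $a_u=EY_u-\bar l_u$ and $b_u=EY_u-\bar u_u$, the only difference being that $k^n_{k/n}$ takes those $\inf,\sup$ over the grid $\{j/n:j\leq k\}$ while $k_{k/n}$ takes them over the continuum $[0,k/n]$ (the $u$-dependence of $\bar l^n,\bar u^n$ through $h$ contributes only a harmless $O(\lambda_h/n)$ error by \eqref{eq2.2}). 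For a fixed càdlàg path, only finitely many local extrema of depth greater than a given $\varepsilon>0$ occur on $[0,q]$, and by right-continuity (resp.\ left-limit approximation) each is captured at a grid point within $1/n$ of it; contributions of shallower extrema are bounded by $\varepsilon$. Letting $\varepsilon\to 0$ gives the uniform-at-grid-points conclusion.

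The main obstacle is this last step. The $J_1$ convergence from (i) does not in itself force agreement at grid points, because $J_1$ convergence only controls values up to a vanishing time change; and Proposition~\ref{prop1} applied directly is useless, since $\sup_{t\leq q}E|Y^n_t-Y_t|$ does not vanish when $Y$ has jumps not on the mesh. The argument must exploit specifically that at every grid point the scheme evaluates the genuine data $Y_{k/n},l_{k/n},u_{k/n}$, so the only residual error is the replacement of a continuous infimum or supremum of a càdlàg path by its counterpart on the shrinking mesh; this error does vanish uniformly over grid points in $[0,q]$, and that is where the work lies.
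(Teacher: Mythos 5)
Your identification of the scheme \eqref{eq3.6} with $\mathbb{SP}^{u^n}_{l^n}(h,Y^n)$ via Remark~\ref{rem_dwie}(b) and your proof of (i) -- a.s.\ joint $J_1$-convergence of the discretized data along the common grid, then Theorem~\ref{thm2}(iii) -- coincide with the paper's argument. For (ii), however, you take a genuinely different and considerably heavier route. The paper does not compute anything at grid points: it enlarges the convergent tuple to include the discretizations $\hat X^n,\hat k^n$ of the limit $(X,k)$ itself, observing that $X^n$ and $\hat X^n$ jump only at the common times $t^*_n=\inf\{j/n: j/n\ge t\}$, so that $(X^n,\hat X^n,k^n,\hat k^n)\to(X,X,k,k)$ \Ppw\ in ${D}(\Rp,\R^4)$; since $J_1$-convergence of the difference $X^n-\hat X^n$ to the continuous function $0$ is locally uniform convergence, $\sup_{t\le q}|X^n_t-\hat X^n_t|\to0$, and (ii) follows because $\hat X^n_{k/n}=X_{k/n}$. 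This is exactly the shortcut you declare unavailable when you say that $J_1$-convergence ``does not force agreement at grid points'': that is true of $X^n\to X$ alone, but not of the joint convergence with matched jump times. Your substitute -- comparing $k^n_{k/n}$ and $k_{k/n}$ through the explicit formula \eqref{eq2.6} and showing that continuum infima/suprema of a c\`adl\`ag path are uniformly approximated by their grid counterparts -- is a sound idea and can be completed (using the c\`adl\`ag modulus $w'$ to handle uniformity over $k/n\le q$), but the nested functional $\sup_s[(y_s-\bar u_s)\wedge\inf_{s\le u\le t}(y_u-\bar l_u)]$ makes this the longest part of the proof rather than a corollary of (i); note also that you must work with the form \eqref{eq5.1}, whose inner infimum runs over $[s,t]$, not the version printed in \eqref{eq2.6}. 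In short: what the paper's approach buys is that (ii) is two lines once (i) is set up correctly; what yours buys is an explicit, quantitative grid-approximation statement, at the cost of a delicate uniformity argument that your sketch only gestures at.
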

 \begin{proof}
First note that in the case where $y,l$ or $u$ have a jump in $t$
there is a common sequence $\{t^*_n=\inf\{k/n;k/n\geq t\}\}$ such
that $t^*_n\to t$ and $\Delta Y^n_{t^*_n}\to\Delta Y_t$, $\Delta
y^n_{t^*_n}\to\Delta y_t$, $\Delta l^n_{t^*_n}\to\Delta l_t$ and
$\Delta u^n_{t^*_n}\to\Delta u_t$. Consequently,
\begin{equation}\label{eq3.7}
(Y^n,y^n,l^n,u^n)\lra (Y,y,l,u)\quad \Ppw\,\,\mbox{\rm
in}\,\,\DDDD.\end{equation} Let $(x^n,k^n)$ be the solution of the
deterministic Skorokhod problem associated   with $y^n$, $\bar
l^n$ and $\bar u^n$, $n\in\N$. Then $k^n$ and $X^n=Y^n+k^n$ have
the form given in (\ref{eq3.6}). Moreover, from (\ref{eq3.7}) and
Theorem \ref{thm2}(iii) the assertion (i) easily follows. To prove
(ii) observe  that for every $t\in\Rp$  the convergences $\Delta
\hat X^n_{t^*_n}\to\Delta X_t$, $\Delta \hat k^n_{t^*_n}\to\Delta
k_t$, hold true, where  $\hat X^n$, $\hat k^n$ are discretizations
of $X$, $k$ i.e. $\hat X^n_t=X_{k/n}$, $\hat k^n_t=k_{k/n}$ for
$t\in[k/n,(k+1)/n)$, $k\in\N\cup\{0\}$, $n\in\N$. Therefore
\[(
X^n,\hat X^n,k^n,\hat k^n)\lra (X,X,k,k)\quad \Ppw\,\,\mbox{
in}\,\,\Ddddd,\] which implies that  $X^n-\hat X^n\to0$, $k^n-\hat
k^n\to0$ $\Ppw$  in $\D$. This completes the proof.
\end{proof}

\section{SDEs with mean reflection}

Let $ l,u\in\D$ be such that $l\le u$ and $(\Omega, {\cal F}, ({\cal
F}_t), P)$ be a filtered probability space.
 Let $X_0$ be an ${\cal
F}_0$-measurable integrable random variable,  $V$ be an $({\cal
F}_t)$-adapted process with trajectories in $\D$ such that
$E|V|_q<\infty$ for every $q\in\Rp$, and let $M$ be a square
integrable $({\cal F}_t)$-martingale.

We consider equation with mean reflection of the form
(\ref{eq1.1}).
\begin{definition}\label{def_solutiion}
We  say that a pair $(X,k)$ of $({\cal F}_t)$ adapted processes
with trajectories in $\D$ is a strong solution of (\ref{eq1.1}) if
$(X,k)=\SP{Y}$, where
\[
Y_t=X_0+\int_0^tf(s,X_{s-})\,dM_s+\int_0^tg(s,X_{s-})\,dV_s,\quad
t\in\Rp.
\]
\end{definition}

We will need the following conditions on the coefficients $f,g$.
\begin{enumerate}
\item[(A1)] $f$ and $g$ are  continuous and there exists $\mu>0$ such that
\[|f(t,x)|+|g(t,x)|\leq \mu(1+|x|),\quad t\in\Rp,\,\,x,\,y\in\R.\]
\item[(A2)] There exists $c>0$ such that
\[|f(t,x)-f(t,y)|+|g(t,x)-g(t,y)|\leq c|x-y|,\quad t\in\Rp,\,\,x,\,y\in\R.\]
\end{enumerate}
We will also assume boundedness of  predictable characteristics of
$M,V$ in the following sense.
\begin{enumerate}
\item[(M)] There exists a nondecreasing c\`adl\`ag  function
$m:\Rp\rightarrow\R$ with  $m_0=0$ such that
\[\max(\langle M\rangle_t,\widetilde{|V|}_t)\le m_t,\quad t\in\Rp.\]
\end{enumerate}

Below we give simple examples of processes $M,V$ satisfying  (M).
\begin{example}\label{ex2} (a) Let $Z$ be a semimartingale with
independent increments and bounded jumps (i.e. there exists $a>0$
such that $|\Delta Z|\leq a$). Then $Z$ is  of the form
\begin{equation}\label{eq4.0}Z_t=B_t +M_t,\quad
t\in\Rp,\end{equation} where $B=(B_t=EZ_t)$ is a deterministic
function with locally bounded variation, $|\Delta B|\leq a$ and
$M=(M_t=Z_t-EZ_t)$ is a square integrable martingale, $|\Delta
M|\leq 2a$. In this case $\widetilde{|B|}=|B|$ and $\langle
M\rangle$ are deterministic nondecreasing functions starting from
$0$. Clearly, $M$ and $B$ satisfy (M) with $m_t=\max(|B|_t,\langle
M\rangle_t)$, $t\in\Rp$.

(b)  Let $M,V$ satisfy  (M)  and $H^1,H^2$ be two bounded
predictable processs. Then $\int_0^\cdot H^1_s\,dM_s$,
$\int_0^\cdot H^2_s\,dV_s$ also satisfy (M) with slightly modified
function $m$. \end{example}

We will use the following version of the estimate \cite[Chapter 1,
Section 9 Theorem 5]{LS}: for every square integrable martingale
$M$ and every $t>0$,
\begin{equation}\label{eq4.1}E\sup_{s<t}|M_s|\leq 3E(\langle
M\rangle_{t-})^{1/2}.
\end{equation}

\begin{theorem} \label{thm4}
Assume  \mbox{\rm(H)},  \mbox{\rm(A1)}, \mbox{\rm(A2)} and
\mbox{\rm (M)}. If $l_0\leq Eh(0,X_0)\leq u_0$,  then there exists
a unique strong solution $(X,k)$ of \eqref{eq1.1} such that
$E\sup_{t\leq q} |X_t|<+\infty$, $q\in\Rp$.
\end{theorem}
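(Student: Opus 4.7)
I would construct $(X,k)$ as a fixed point of a Picard map built on the Skorokhod solver of Section~2. For $q>0$ let $\mathcal{S}_q$ denote the set of $(\mathcal{F}_t)$-adapted c\`adl\`ag processes with the prescribed starting value $X_0$ and with $E\sup_{t\le q}|X_t|<\infty$. Define $\Phi:\mathcal{S}_q\to\mathcal{S}_q$ by $\Phi(X)=X'$, where $(X',k')=\SP{Y(X)}$ and
\[
Y(X)_t=X_0+\int_0^t f(s,X_{s-})\,dM_s+\int_0^t g(s,X_{s-})\,dV_s.
\]
By Definition~\ref{def_solutiion}, strong solutions of \eqref{eq1.1} on $[0,q]$ are exactly fixed points of $\Phi$ (with $k$ recovered as the corresponding Skorokhod reflection part). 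Well-definedness $\Phi(\mathcal{S}_q)\subset\mathcal{S}_q$ is routine from (A1), \eqref{eq4.1}, (M), Corollary~\ref{cor1}, and the identity $E\int_0^q\phi_s\,d|V|_s=E\int_0^q\phi_s\,d\widetilde{|V|}_s\le m_q\,E\sup_{s\le q}\phi_s$ for nonnegative predictable $\phi$.

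\textbf{Key contraction estimate.} Applying Proposition~\ref{prop1} with identical $h,l,u$ yields
\[
E\sup_{t\le q}|\Phi(X^1)_t-\Phi(X^2)_t|\le(\CL+1)\,E\sup_{t\le q}|Y(X^1)_t-Y(X^2)_t|,
\]
while (A2) combined with \eqref{eq4.1}, (M), and the predictable-compensator identity above gives
\[
E\sup_{t\le q}|Y(X^1)_t-Y(X^2)_t|\le\bigl(3c\sqrt{m_q}+cm_q\bigr)E\sup_{t\le q}|X^1_t-X^2_t|.
\]
Thus $\Phi$ is a strict contraction in the norm $\|X\|_q:=E\sup_{t\le q}|X_t|$ whenever $\alpha(q):=(\CL+1)(3c\sqrt{m_q}+cm_q)<1$, which holds on a sufficiently small interval $[0,q_0]$ because $m_0=0$ and $m$ is right-continuous.

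\textbf{From local to global.} Banach's fixed-point theorem then produces a unique strong solution on $[0,q_0]$. Restarting at $q_0$ with the $\mathcal{F}_{q_0}$-measurable initial value $X_{q_0}$ (for which $Eh(q_0,X_{q_0})\in[l_{q_0},u_{q_0}]$ by the Skorokhod constraint on $[0,q_0]$), I would iterate the construction on successive intervals $[q_0,q_0+q_1]$, $[q_0+q_1,q_0+q_1+q_2]$, and so on. Any compact interval $[0,T]$ contains only finitely many jumps of $m$ of size exceeding any fixed threshold, so it can be covered by finitely many such contraction steps; at a jump time $t_0$ of $m$, the increment $\Delta X_{t_0}=f(t_0,X_{t_0-})\Delta M_{t_0}+g(t_0,X_{t_0-})\Delta V_{t_0}+\Delta k_{t_0}$ is recovered algebraically from the Skorokhod jump rule~\eqref{eq_kwzor}. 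Uniqueness propagates from piece to piece through the same contraction estimate.

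\textbf{Main obstacle.} The only genuine technical difficulty is this patching step in the presence of large jumps of $m$ (equivalently, of $\langle M\rangle$ or $\widetilde{|V|}$): a pure contraction argument cannot cross such a jump, so it must be isolated and the corresponding jump of $X$ computed explicitly via~\eqref{eq_kwzor}. Apart from this combinatorial bookkeeping, the argument is a clean Picard scheme driven by Proposition~\ref{prop1} together with~\eqref{eq4.1} and condition~(M).
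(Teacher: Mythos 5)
Your proposal is correct and follows essentially the same route as the paper: a Banach fixed-point argument for the map $X\mapsto$ (first coordinate of $\SP{Y(X)}$), with the contraction constant controlled via Proposition \ref{prop1}, the martingale estimate \eqref{eq4.1} and condition (M), made strict by localizing to intervals on which the increment of $m$ is small, and with jumps of $m$ crossed explicitly through the jump formula \eqref{eq_kwzor} before restarting. The paper implements this by working on half-open intervals $[0,t_1),[t_1,t_2),\dots$ with $t_{k+1}-t_k$ defined through the growth of $m$, exactly the patching you describe.
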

\begin{proof}
Set $t_1=\inf\{t>0:(C_h+1)c\max(m_t,3(m_t)^{1/2})>1/2\}$, where
$C_h$ is the constant from Proposition \ref{prop1}. In the first
step of the proof we  show the existence and uniqueness of
solutions of  \eqref{eq1.1} on the interval $[0,t_1)$. Set
\[
\Ss^1=\{Y:\, Y\text{ is }(\F_t)\text{-adapted, }
Y_0=X_0,\,\,Y=Y^{t_1-},\,\,E\sup_{t<t_1}|Y_t|<\infty\}
\]
and define the map $\Phi:\, \Ss^1\rightarrow\Ss^1$ by putting
$\Phi(Y)$ to be the first coordinate of the solution of the
Skorokhod problem $\SP{Z}$ with $Z= X_0+\int_0^\cdot
f(s,Y_{s-})\,dM^{t_1-}_s+\int_0^\cdot g(s,Y_{s-})\,dV^{t_1-}_s$
stopped in $t_1-$. First we prove that if $Y\in\Ss^1$, then
$\Phi(Y)\in\Ss^1$. Since $k$ is deterministic it  is sufficient to
prove that $Z\in\Ss^1$. By (\ref{eq4.1}), (M)  and (A1),
\begin{eqnarray*}
E\sup_{t<t_1}|Z_t|&\leq& E|X_0|
+E\sup_{t<t_1}\int_0^t|f(s,Y_{s-})\,dM_s|
+E\int_0^{t_1-}| g(s,Y_{s-})|\,d|V|_s\\
&\leq&E|X_0|+3E(\int_0^{t_1-}|f(s,Y_{s-})|^2\,d\langle M\rangle_s)^{1/2}+E\int_0^{t_1-}| g(s,Y_{s-})|\,d\widetilde{|V|}_s\\
&\leq&E|X_0|+3(m_{t_1-})^{1/2}E\sup_{s< t_1}|f(s,Y_{s-})|+m_{t_1-}E\sup_{s< t_1}|g(s,Y_{s-})|\\
&\leq&E|X_0|+\max(3(m_{t_1-})^{1/2},m_{t_1-})\mu(1+E\sup_{s<t_1}|Y_s|)<+\infty.
\end{eqnarray*}
By Proposition \ref{prop1}, for any $Y,Y'\in\Ss^1$ we have
\begin{align*}
E\sup_t |\Phi(Y)_t-\Phi(Y')_t| & \le (C_h+1)E\sup_{t<t_1} \left|\int_0^{t} f(s,Y_{s-})-f(s,Y'_{s-})\,dM_s\right. \\
&\quad+\left.\int_0^{t_1-}| g(s,Y_{s-})-g(s,Y'_{s-})|\,d|V|_s\right|\\
 &\le (C_h+1)3E\left(\int_0^{t_1-}|f(s,Y_{s-})-f(s,Y'_{s-})|^2\,d\langle M\rangle_s\right)^{1/2}\\
 &\quad+(C_h+1)E\int_0^{t_1-}|g(s,Y_{s-})-g(s,Y'_{s-})|\,d\widetilde{|V|}_s)\\
 &\le (C_h+1)c\max(3(m_{t_1-})^{1/2},m_{t_1-})E\sup_t|Y_t-Y'_t|\\
 &\le \frac 12 E\sup_t|Y_t-Y'_t|.
\end{align*}
By the Banach contraction principle, there exists a unique
solution $(X,k)$ of \eqref{eq1.1} on the interval $[0,t_1)$. By
Remark \ref{rem_dwie} (b) we can get a unique solution on
$[0,t_1]$ by putting
\[k_{t_1}=\max(\min(k_{t_1-},\bar u_{t_1}-EY_{t_1}),\bar l_{t_1}-EY_{t_1})),\quad
X_{t_1}=Y_{t_1}+k_{t_1},\] where
$Y_{t_1}=X_{t_1-}+f(t_1,X_{t_1-})\Delta
V_{t_1}+g(t_1,X_{t_1-})\Delta M_{t_1}$, $\bar
l_{t_1}=H^{-1}(t_1,l_{t_1},Y_{t_1})$  and $\bar
u_{t_1}=H^{-1}(t_1,u_{t_1},Y_{t_1})$. Now we define the sequence
$\{t_k\}$ by setting $t_{k+1}=t_k+\inf\{t>0:(C_h+1)c\min(\hat
m_t,3(\hat m_t)^{1/2})>1/2\}$, where $\hat m_t=m_{t_k+t}-m_{t_k}$,
$k\in\N$. Arguing as above, one can obtain a solution of
\eqref{eq1.1} on $[t_k,t_{k+1}]$. Since $t\mapsto m_t$ is
c\`adl\`ag, $t_k\uparrow\infty$ and the unique solution on $\Rp$
we obtain by  putting together the solutions on intervals
$[t_k,t_{k+1}]$, $k\in\N$.
\end{proof}
The following example shows that solutions of (\ref{eq1.1}) can be applied in investment models with constraints.

\begin{example}
Consider an insurance company with initial capital $x>0$, whose
risk reserve process is given by a semimartingale $J$ with
independent increments and bounded jumps.  The  company is allowed
to invest the risk reserve into the financial market consisting of
the riskless bond $B_t=1$, $t\in\Rp$, and the risky stock whose
price $S$ evolves according to the stochastic differential
equation
\[S_t=S_0+\int_0^tS_ub\,du+\int_0^tS_u\sigma \, dW_u,\quad t\in\Rp,\]
where $W$ is a Wiener process, $\sigma>0$ and $b\in\R$. Let
$V^{k,\pi}$ denote the company wealth portfolio under investment
strategy $(-k,\pi)$, that is
\begin{equation}\label{eq_kapital}
V^{k,\pi}_t=-k_t+\pi_tS_t,\quad t\in\Rp.
\end{equation}
In \cite{ChM} (see also \cite{BR}) the authors suggest that it  is
reasonable to assume that the strategy is deterministic. Following
this suggestion we  restrict ourselves to strategies $(-k,\pi)$
such that $k$ is deterministic and  $(-k,\pi)$ is self-financing,
that is the dynamics of $V^{k,\pi}$ is given by the following
equation
\begin{equation}\label{eq_samo}V^{k,\pi}_t=x+\int_0^t\pi_{u}\,dS_u+\int_0^t\,dJ_u,\quad t\in\Rp.
\end{equation}
Let $X_t=\pi_tS_t$, $t\in\Rp$, be the amount of money  invested in
stock at time $t\in\Rp$. By \eqref{eq_kapital} and
\eqref{eq_samo},  $X$ satisfies the equation
\[
X_t=x+\int_0^t X_{s}b\,ds+\int_0^tX_{s}\sigma\,dW_s +J_t+k_t,\quad
t\in\Rp.
\]

Let $l,u\in\D$, $l\le u$ and $h:\Rp\times \R\rightarrow\R$ be  a
concave function satisfying (H). We consider risk measure imposing
some  restriction on the class of admissible strategies. Namely, a
portfolio is considered admissible if and only if for every
$t\in\Rp$ the amount of money $X_t$ invested in the risky stock
satisfies the following constraints
\begin{equation}\label{eq_admis}
L_t(X_t)\le 0\le U_t(X_t),\quad t\in\Rp,
\end{equation}
where $(L_t)$ and $(U_t)$ are collections of convex risk  measures
given by the formulas
\[
L_t(X_t)=\inf\{k\in \R:Eh(t,X_t+k)\ge l_t\},\quad
U_t(X_t)=\sup\{k\in\R:Eh(t,X_t+k)\le u_t\}.
\]
(recall that a map $\rho:\Ll^1\rightarrow\R$ is a convex risk
measure iff for all $a\in\R$, $\lambda\in[0,1]$ and $X$,
$Y\in\Ll^1$ such that $X\le Y$ we have that $\rho(X+k)=\rho(X)-k$,
$\rho(X)\ge \rho(Y)$ and $\rho(\lambda X+(1-\lambda) Y)\le
\lambda\rho(X)+(1-\lambda)\rho(Y)$).

Note that for every $t\in\Rp$ we have  $L_t(X_t)\le U_t(X_t)$, so
one can say that for every $t\in\Rp$ the risk measure $U_t$ is
more restrictive than $L_t$. The interpretation of
\eqref{eq_admis} is the following.  If position $X$ is very risky
(i.e. $L_t(X_t)>0$), then the company have  to borrow some money
from bank account and buy some amount of stock. On the other hand,
if position $X$ is safe enough (i.e. $U_t(X_t)<0$), then the
company sells some amount of stock and reduces the debt in bank
account. The company is looking for the  minimal strategy in the
sense that if \eqref{eq_admis} is satisfied then there is no money
flow from the bank account to the stock market or in the opposite
direction.

Since for every $\Rp$ the condition $L_t(X_t)\le 0$ (resp.
$U_t(X_t)\ge 0$) is equivalent to the condition $Eh(t,X_t)\ge l_t$
(resp. $Eh(t,X_t)\le u_t$), by Theorem \ref{thm4} there exists a
unique minimal admissible strategy $(-k,\pi)$. More precisely,
there exists a unique solution $(X,k)=\SP{x+\int_0^\cdot
X_{s}b\,ds+\int_0^\cdot X_{s}\sigma\,dW_s+J}$ and if we set $\pi
_t=S^{-1}_tX_t$, $t\in\Rp$,  then the wealth portfolio $V^{k,\pi}$
satisfies \eqref{eq_admis}.
\end{example}

We are able to approximate
  the solution  $(X,k)$ of (\ref{eq1.1}) by a simple
discretization method being  a counterpart to the Euler scheme
(see, e.g.,\cite{s4}). The scheme for  the SDE (\ref{eq1.1}) is
given by the following recurrent formula. Set
\begin{equation}\label{eq4.2}
\left\{\begin{array}{ll}
k^n_0&=0,\qquad X^n_0=Y^n_0=Y_0,\\[2mm]
Y^n_{(k+1)/n}&=Y^n_{k/n}+f((k+1)/n,X^n_{k/n})(M_{(k+1)/n}-M_{k/n})\\[2mm]
&\qquad\quad+g((k+1)/n,X^n_{k/n})(V_{(k+1)/n}-V_{k/n}),\\[2mm]
k^n_{{(k+1)}/{n}}&= \max\big[\min\big[k^n_{{k}/{n}},
\bar u^{n}_{(k+1)/{n}}-EY^n_{(k+1)/n}\big],
\bar l^n_{(k+1)/n}-EY^n_{(k+1)/n})\big],\\[2mm]
X^n_{{(k+1)}/{n}}&= Y^n_{(k+1)/n}+k^n_{(k+1)/n},
\end{array}
\right.
\end{equation}
where $\bar l_{(k+1)/n}=H^{-1}((k+1)/n,l_{(k+1)/n},Y^n_{(k+1)/n})$,  $\bar u_{(k+1)/n}=H^{-1}((k+1)/n,u_{(k+1)/n},Y^n_{(k+1)/n})$ and
$Y^n_t=Y^n_{k/n}$, $k^n_t=k^n_{k/n}$, $X^n_t=X^n_{k/n}$ for $t\in[k/n,(k+1)/n)$, $k\in\N\cup\{0\}$, $n\in\N$.

\begin{theorem}\label{thm5}
Assume   \mbox{\rm (H)}, \mbox{\rm(A1)}, \mbox{\rm(A2)} and
\mbox{\rm(M)}.  If $X_0$ is an integrable random variable such
that $l_0\leq Eh(0,X_0)\leq u_0$,   $\{l^n\},\{u^n\}$ are
sequences of  discretizations of  $l,u$   and $(X^n, k^n)$ and
$Y^n$ are defined by \mbox{\rm(\ref{eq4.2})}, then $(X^n,
k^n)=\mathbb{SP}^{u^n}_{l^n}(h,Y^n)$, $n\in\N$  and
\begin{enumerate}\item[{\bf (i)}]
$\displaystyle{(X^n,k^n,Y^n,l^n,u^n)\arrowp (X,k,Y,l,u)\quad \,\,\mbox{
in}\,\,\Dddddd,}$
\item[{\bf (ii)}] for every $q\in\Rp$,
\[\max_{k;\,k/n\leq q}(|X^n_{k/n}-X_{k/n}|+|k^n_{k/n}-k_{k/n}|)\arrowp0,\]
\end{enumerate}
where $(X,k)$ is a unique strong solution of
\mbox{\rm(\ref{eq1.1})}.
\end{theorem}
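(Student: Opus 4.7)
The plan is to reduce assertion (i) to Theorem \ref{thm3} by controlling the Euler-type error between the scheme $Y^n$ and the discretization of the true driver $Y_t=X_0+\int_0^t f(s,X_{s-})\,dM_s+\int_0^t g(s,X_{s-})\,dV_s$. First I would verify that $(X^n,k^n)=\mathbb{SP}^{u^n}_{l^n}(h,Y^n)$: the recurrence for $k^n$ in \eqref{eq4.2} matches the explicit formula \eqref{eq_kwzor} from Remark \ref{rem_dwie}(b) evaluated at the grid points $k/n$ applied to the step driver $Y^n$ and the step barriers $l^n,u^n$, and between grid points all the relevant processes are constant, so the Skorokhod conditions in Definition \ref{def1} hold for the interpolated pair.

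Let $(X,k)=\mathbb{SP}^{u}_{l}(h,Y)$ be the unique strong solution from Theorem \ref{thm4}, and introduce the auxiliary processes $\hat Y^n_t=Y_{\rho^n_t}$ and $(\hat X^n,\hat k^n)=\mathbb{SP}^{u^n}_{l^n}(h,\hat Y^n)$. Theorem \ref{thm3} applied to $Y$ then yields
\[(\hat X^n,\hat k^n,\hat Y^n,l^n,u^n)\lra(X,k,Y,l,u)\quad\Ppw\text{ in }\Dddddd.\]
Moreover, by Proposition \ref{prop1} with common $h$ and common barriers $l^n,u^n$,
\[\sup_{t\le q}|k^n_t-\hat k^n_t|\le C_h\sup_{t\le q}E|Y^n_t-\hat Y^n_t|,\qquad E\sup_{t\le q}|X^n_t-\hat X^n_t|\le (C_h+1)E\sup_{t\le q}|Y^n_t-\hat Y^n_t|.\]
Hence it suffices to establish the Euler bound $E\sup_{t\le q}|Y^n_t-\hat Y^n_t|\to 0$ for each $q\in\Rp$: combined with the above $\Ppw$-convergence in $J_1$, this yields convergence in probability in $J_1$ of $(X^n,k^n,Y^n,l^n,u^n)$ to $(X,k,Y,l,u)$.

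To obtain the Euler bound, write $Y^n-\hat Y^n$ as the difference of the two pairs of stochastic integrals whose integrands come from \eqref{eq4.2} and from $f(s,X_{s-})$, $g(s,X_{s-})$. Splitting each integrand into a purely temporal oscillation plus a Lipschitz term in the state and applying the martingale estimate \eqref{eq4.1}, assumption \mbox{\rm(M)} and condition \mbox{\rm(A2)}, together with $E\sup_{s\le q}|X_s|<\infty$ from Theorem \ref{thm4}, one obtains
\[E\sup_{s\le t}|Y^n_s-\hat Y^n_s|\le \varepsilon_n(q)+c\max(3(m_t)^{1/2},m_t)\,E\sup_{s\le t}|X^n_s-X_s|,\]
where $\varepsilon_n(q)\to 0$ collects the continuity-in-time contributions of $f,g$ and the path oscillation of $X$ on intervals of length $1/n$. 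Inserting $X^n-X=(X^n-\hat X^n)+(\hat X^n-X)$, applying Proposition \ref{prop1} once more, and iterating on the same intervals $[t_k,t_{k+1}]$ constructed in the proof of Theorem \ref{thm4}---on each of which the prefactor $(C_h+1)c\max(3(m_\cdot)^{1/2},m_\cdot)$ is at most $1/2$---yields $E\sup_{s\le q}|Y^n_s-\hat Y^n_s|\to 0$; this gives (i). Part (ii) then follows from the triangle inequality $|X^n_{k/n}-X_{k/n}|\le|X^n_{k/n}-\hat X^n_{k/n}|+|X_{\rho^n_{k/n}}-X_{k/n}|$ and right-continuity of $X,k$. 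The main obstacle is exactly this Gronwall step: since Proposition \ref{prop1} forces the Euler estimate to pass through an expectation of $|Y^n-\hat Y^n|$, one cannot rely on a pathwise contraction but must work with the deterministic characteristic $m$ and iterate on a finite partition on which each step is a genuine contraction.
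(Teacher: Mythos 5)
Your overall strategy is exactly the one the paper uses: introduce the auxiliary pair obtained by solving the mean Skorokhod problem for the discretization of the true driver $Y$, invoke Theorem \ref{thm3} for that pair, control the remaining gap by the Lipschitz estimate of Proposition \ref{prop1}, and close an Euler/Gronwall loop on the intervals $[t_k,t_{k+1}]$ from the proof of Theorem \ref{thm4}, on each of which the prefactor is at most $1/2$. However, one step as written does not survive the presence of jumps. Your Euler inequality puts $E\sup_{s\le t}|X^n_s-X_s|$ on the right-hand side, and you then split $X^n-X=(X^n-\hat X^n)+(\hat X^n-X)$, implicitly needing $E\sup_{s\le t}|\hat X^n_s-X_s|\to 0$. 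This fails whenever $Y$ (hence $X$) has a jump off the grid: your $\hat X^n$ is a step process built from $\hat Y^n_t=Y_{\rho^n_t}$, so $\sup_{t\le q}|\hat X^n_t-X_t|$ stays bounded below by roughly the largest jump of $Y$ on $[0,q]$; Theorem \ref{thm3} gives only $J_1$-convergence and convergence of the maxima over grid points, not uniform convergence, and Proposition \ref{prop1} applied to $\hat Y^n$ versus $Y$ gives nothing since $\sup_{t\le q}|Y_{\rho^n_t}-Y_t|\not\to 0$.

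The paper's proof avoids this by never comparing with $X$ itself in supremum norm: it compares $X^n$ with the discretization $\hat X^n_t=X_{\rho^n_t}$ of $X$, so the Lipschitz term in the Euler estimate is a difference at grid points, $|X^n_{k/n}-X_{k/n}|$; it uses Theorem \ref{thm3}(ii) together with uniform integrability of $\{\sup_{t\le q}|\hat X^n_t|\}$ to get the error $\epsilon^n_1=E\sup_{t\le q}|\bar X^n_t-\hat X^n_t|\to 0$ between the two step processes; and it formulates the temporal error $\epsilon^n_2$ as integrals against $dm_s$, where dominated convergence applies. If you restate your Euler bound with $E\sup_{s\le t}|X^n_s-X_{\rho^n_s}|$ in place of $E\sup_{s\le t}|X^n_s-X_s|$ --- which is in fact what your integrand splitting produces, since the scheme only ever evaluates the state at grid points --- the rest of your iteration closes exactly as in the paper. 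A second, smaller omission: the contraction argument only controls the half-open intervals $[t^n_k,t^n_{k+1})$; the paper separately passes to the closed right endpoints using the explicit jump formula for $k^n$ from Remark \ref{rem_dwie}(b) and uniform integrability of the jumps, a step your sketch skips.
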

 \begin{proof}
We know that $(X,k)=\SP{Y}$, where
$Y_t=X_0+\int_0^tf(s,X_{s-})\,dM_s+\int_0^tg(s,X_{s-})\,dV_s$,
$t\in\Rp$. Let $\hat X^n$, $\hat k^n$, $\hat Y^n$ be
discretizations of $X,k,Y$, that is $\hat X^n_t=X_{\rho^n_t}$,
$\hat k^n_t=k_{\rho^n_t}$, $\hat Y^n_t=Y_{\rho^n_t}$, $t\in\Rp$.
Let $(\bar X^n,\bar k^n)$ be a solution of the   Skorokhod problem
associated   with $\hat Y^n$, $ l^n$  and $u^n$, i.e. $(\bar X^n,
\bar k^n)=\mathbb{SP}^{u^n}_{l^n}(h,\hat Y^n)$, $n\in\N$. Then, by
Theorem \ref{thm3}(ii), for every  $q\in\Rp$,
\[
\sup_{t\leq q}|(\bar X^n_t-\hat X^n_t|+|\bar k^n_t-\hat
k^n_t|)\lra0.
\]
 Fix $q\in\Rp$. Since $E\sup_{t\leq q}|X_t|<+\infty$ and  $E\sup_{t\leq q}|Y_t| <+\infty$,  the sequences
$\{\sup_{t\leq q}|\hat X^n_t|\}$  and $\{\sup_{t\leq q}|\bar X^n_t|\}$ are uniformly integrable, which implies that
\begin{equation}\label{eq4.3}\epsilon^n_1=E \sup_{t\leq q}|\bar X^n_t-\hat
X^n_t|\lra0.\end{equation} Let $\{M^n\},\{V^n\}$ be sequences of
discretizations  of $M,V$, respectively. By (A1)   and the
Lebesgue dominated convergence theorem,
\begin{eqnarray*}\nonumber
\epsilon^n_2&=&E \sup_{t\leq q}(|\int_0^{{t}}f(s,\hat X^n_{s-})\,dM^n_s-\int_0^{\rho^n_{t}}f(s, X_{s-})\,dM_s|\\
&&\qquad\qquad+ E|\int_0^{{t}}g(s,\hat X^n_{s-})\,dV^n_s-
\int_0^{\rho^n_{t}}g(s, X_{s-})\,dV_s |)\nonumber \\
&=&E (\sup_{t\leq q}(|\int_0^{{\rho^n_{t}}}
(f(\rho^n_{s},\hat X^n_{s-})-f(s, X_{s-}))\,dM_s
+|\int_0^{{\rho^n_{t}}}(g(\rho^n_{s},\hat X^n_{s-})-
g(s, X_{s-}))\,dV_s |)\nonumber\\
&\leq&3E (|\int_0^{{\rho^n_{q}}}|f(\rho^n_{s},\hat X^n_{s-})-f(s, X_{s-})|^2\,dm_s)^{1/2} \nonumber\\&&\qquad\qquad+E\int_0^{{\rho^n_{q}}}|g(\rho^n_{s},\hat X^n_{s-})-
g(s, X_{s-})|\,dm_s |\lra0
\end{eqnarray*}
because $\rho^n_s\to s$  and $\hat X^n_{s-}\to X_{s-}$ $\Ppw$ for
$s\in[0,q]$. On the other hand, by Theorem \ref{thm3}, the pair
$(X^n, k^n)$ defined by (\ref{eq4.2})  is a solution of mean
reflected SDEs with barriers $l^n$, $u^n$ of the form
\[X^n_t=X_0+\int_0^{{t}}f(s, X^n_{s-})\,dM^n_s+\int_0^{{t}}g( s,X^n_{s-})\,dV^n_s +k^n_t,\quad t\in\Rp,\]
i.e. $(X^n, k^n)=\mathbb{SP}^{u^n}_{l^n}(h,Y^n)$, where
$Y^n_t=X_0+\int_0^{{t}}f(s, X^n_{s-})\,dM^n_s+\int_0^{{t}}g(
s,X^n_{s-})\,dV^n_s$, $t\in\Rp$, $n\in\N$. From the above and
Proposition \ref{prop1}(ii), for every $t\leq q$,
\begin{eqnarray*}\nonumber
E\sup_{s\leq t}|\hat X^n_s-X^n_s|&\leq&E\sup_{s\leq t}|\hat X^n_s-\bar X^n_s|+E\sup_{s\leq t}|\bar X^n_s-X^n_s|\\
&\leq&\epsilon^n_1+(C_h+1)E\sup_{s\leq t}|\hat Y^n_s-Y^n_s|\nonumber\\
&\leq&\epsilon^n_1+(C_h+1)\epsilon^n_2+(C_h+1)E\sup_{s\leq t}|\int_0^sf(u,\hat X^n_{u-})-f(u,X^n_{u-})\,dM^n_u|\nonumber\\
&&\quad+(C_h+1)E\sup_{s\leq t}|\int_0^sg(u,\hat X^n_{u-})-g(u,X^n_{u-})\,dV^n_s|.\nonumber\\
&=&\epsilon^n_1+(C_h+1)\epsilon^n_2+(C_h+1)E\sup_{s\leq t}|\int_0^{\rho^n_s}f(\rho^n_u,\hat X^n_{u-})-f(\rho^n_u,X^n_{u-})\,dM_u|\nonumber\\
&&\quad+(C_h+1)E\sup_{s\leq t}|\int_0^{\rho^n_s}g({\rho^n_u},\hat X^n_{u-})-g({\rho^n_u},X^n_{u-})\,dV_s|.
\end{eqnarray*}
Let $m^n$  be a discretization of the function $m$, i.e.
$m^n_t=m_{\rho^n_t}$, $t\in\Rp$, $n\in\N$. Set
$t^n_1=\inf\{t>0:(C_h+1)c\max(m^n_t,3(m^n_t)^{1/2})>1/2\}$ and
observe that arguing similarly to the proof of Theorem \ref{thm3}
shows that
\begin{eqnarray*}
E\sup_{s< t^n_1}|\hat X^n_s-X^n_s|&\leq&\epsilon^n_1+(C_h+1)\epsilon^n_2+(C_h+1)E\sup_{s< t^n_1}|\int_0^{\rho^n_s}f(\rho^n_u,\hat X^n_{u-})-f(\rho^n_u,X^n_{u-})\,dM_u|\\
&&\quad+(C_h+1)E\sup_{s< t^n_1}|\int_0^{\rho^n_s}g({\rho^n_u},\hat X^n_{u-})-g({\rho^n_u},X^n_{u-})\,dV_s|.\\
&\leq&\epsilon^n_1+(C_h+1)\epsilon^n_2+(C_h+1)3cE(\int_0^{\rho^n_{t^n_1-}}|\hat X^n_{s-}-X^n_{s-}|^2\,d\langle M\rangle_s)^{1/2}\\
&&\quad+(C_h+1)cE\int_0^{\rho^n_{t^n_1-}}|\hat X^n_{s-}-X^n_{s-}|\,d\widetilde{|V|}_s\\
&\leq& \epsilon^n_1+(C_h+1)\epsilon^n_2+(C_h+1)3c (m^n_{t^n_1-})^{1/2}E\sup_{s<t^n_1}|\hat X^n_{s-}-X^n_{s-}|\\
&&\qquad+(C_h+1)c m^n_{t^n_1-}E\sup_{s<t^n_1}|\hat X^n_{s-}-X^n_{s-}|\\
 &\leq& \epsilon^n_1+(C_h+1)\epsilon^n_2+\frac 12 E\sup_{s<t^n_1}|\hat X^n_{s}-X^n_{s}|,
\end{eqnarray*}
which implies that
\begin{equation} \label{eq4.6}
E\sup_{s<t^n_1}|\hat X^n_{s}-X^n_{s}|\leq
2(\epsilon^n_1+(C_h+1)\epsilon^n_2)\lra0.
\end{equation}
In fact the convergence holds true on the closed interval
$[0,t^n_1]$. To check this we first observe that for a
sufficiently large $n$, $t^n_1=t^*_n=\inf\{k/n;k/n\geq t_1\}\}\lra
t_1$, where $t_1$ was defined in the proof of  Theorem \ref{thm3}.
Hence $(\hat
X^n_{t^n_1-},X^n_{t^n_1-},Y^n_{t^n_1-})\lra(X_{t_1-},X_{t_1-},Y_{t_1-})$.
Moreover,
\[
k^n_{t^n_1}=\max(\min(k^n_{t^n_1-},\bar
u^n_{t^n_1}-EY^n_{t^n_1}),\bar l^n_{t^n_1}-EY^n_{t^n_1})),\] where
$Y^n_{t^n_1}=Y^n_{t^n_1-}+f(t^n_1,X^n_{t^n_1-})\Delta
V^n_{t^n_1}+g(t^n_1,X^n_{t^n_1-})\Delta M^n_{t^n_1}$, $\bar
l^n_{t^n_1}=H^{-1}(t^n_1,l^n_{t^n_1},Y^n_{t^n_1})$  and $\bar
u^n_{t^n_1}=H^{-1}(t^n_1,u^n_{t^n_1},Y^n_{t^n_1})$, which implies
that $\Delta X^n_{t^n_1}=\Delta Y^n_{t^n_1}+\Delta
k^n_{t^n_1}\arrowp\Delta X_{t_1}$. On the other hand,  $\Delta
\hat X^n_{t^n_1}\lra\Delta X_{t_1}$ $\Ppw$, which  implies that
$\hat X^n_{t^n_1}-\Delta X^n_{t^n_1}\arrowp0$. Therefore, by
(\ref{eq4.6}) and uniform integrability of jumps $\{\Delta \hat
X^n_{t^n_1}\}$, $\{\Delta X^n_{t^n_1}\}$ we have $ E\sup_{s\leq
t^n_1}|\hat X^n_{s}-X^n_{s}|\lra0$.

 It is easy to see that we can
repeat the previously used arguments on next intervals
$[t^n_k,t^n_{k+1}]$, where
$t^n_{k+1}=t^n_k+\inf\{t>0:(C_h+1)c\max(\hat m^n_t,3(\hat
m^n_t)^{1/2})>1/2\}$ and $\hat m^n_t=m^n_{t^n_k+t}-m^n_{t^n_1}$,
$k\in\N$. Since for a sufficiently large $n$,
$t^n_k=\inf\{k/n:k/n\geq t_k\}\}$ ($t_k$ was defined in the proof
of  Theorem  \ref{thm3}), for every $q\in\Rp$ in finitely many
steps we are able to prove  that
\begin{equation}
\label{eq4.7} E\sup_{s\leq q}|\hat
X^n_{s}-X^n_{s}|\lra0,\quad q\in\Rp.
\end{equation}
From (\ref{eq4.7})  and  the observation that $(\hat
X^n,M^n,V^n,l^n,u^n)\to (X,M,V,l,u)$ $\Ppw$ in $\Dddddd$ the
theorem follows.
\end{proof}

We say that equation \eqref{eq1.1} has a {weak
solution} if there exist a filtered probability space
$(\bar{\Omega},\bar{\FF},(\bar{\cal
F}_t),\bar{P})$,  adapted processes $(\bar X,\bar k)$,  $\bar M,\bar V$ defined on
$(\bar{\Omega},\bar{\FF},\bar{P})$ such that
${\cal L}(\bar{X}_0,\bar M,\bar V)=$ ${\cal L}({X}_0, M, V)$ and $(\bar X,\bar k)$ is a solution of mean reflected SDE
\begin{equation}\label{eq4.8}
\bar X_t = \bar X_0 +\int_0^t f(s,\bar X_{s-})\,d\bar M_s+\int_0^tg(s,\bar X_{s-})\, d\bar V_s + \bar k_t,\quad t\in\Rp.
\end{equation}
If the laws ${\cal L}(\bar{X},\bar k,\bar M,\bar V)$ and ${\cal
L}(\bar{X}',\bar k',\bar M',\bar V')$ of any two weak solutions of
\eqref{eq1.1}, possibly defined on different probability spaces,
are the same  we say that the weak uniqueness holds for
\eqref{eq1.1}.

 Let $\{M^n\}$ and $\{V^n\}$ be  sequences of square integrable $({\cal
F}^n_t)$-martingales and $({\cal F}^n_t)$-adapted process with
trajectories in $\D$ such that $E|V^n|_q<\infty$ for every
$q\in\Rp$. We will consider a sequence of mean reflected SDEs of
the form (\ref{eq4.9}). We assume that the  starting points
$X^n_0$ are integrable ${\cal F}^n_0$-measurable random variables
and for every $n\in\N$ the processes $M^n$, $V^n$ satisfy the
condition
\begin{enumerate}
\item[(Mn)] there exists a nondecreasing c\`adl\`ag function
$m^n:\Rp\rightarrow\R$ with $m^n_0=0$ such that
\[\max(\langle M^n\rangle_t,\widetilde{|V^n|}_t)\le m^n_t,\quad t\in\Rp.\]
\end{enumerate}
Clearly, if $\sup_n m^n_t<+\infty$, $t\in\Rp$, then  the sequences
$\{M^n\}$ and $\{V^n\}$ satisfy the so called condition (UT)
introduced by Stricker\cite{st}:
\begin{description}
\item[{(UT)}]\index{Condition (UT)} For every  $q\in\Rp$ the
family of random variables
\[
\{\int_{[0,q]} U^n_s\,dZ^n_s;\,n\in\N\,,\,U^n\in\mbox{\bf U}^n_q\}
\]
is bounded in probability. Here $ \mbox{\bf U}^n_q$  is the class
of discrete predictable processes of the form
$U^n_s=U^n_0+\sum_{i=0}^kU^n_i\mbox{\bf 1}_{\{t_i<s\leq
t_{i+1}\}}$, where  $0=t_0<t_1<...<t_k=q$  and $U^n_i$  is  ${\cal
F}^n_{t_i}$  measurable, $|U^n_i|\leq 1$  for
$i\in\{0,...,k\}\,,\,n,k\in\N.$
\end{description}
Condition (UT)  proved to be very useful in the theory of limit
theorems for stochastic integrals  and for solutions of SDEs (see,
e.g., \cite{jmp, kp, ms,s4, Sl1}).

We will also assume that
\begin{enumerate}
\item[(A3)] the coefficients $f^n$, $g^n$ satisfy (A1) for every $n\in\N$ and there exists $f,g$ such that
\[||f^n-f||_{[0,q]\times K}+||g^n-g||_{[0,q]\times K}\lra0\]
for every $q\in\Rp$  and every compact subset $K\subset \R$.
\end{enumerate}

\begin{theorem}\label{thm6}
Assume  \mbox{\rm(H)}, \mbox{\rm(A1)}  and \mbox{\rm(A3)}. Let
$\{l^n\},\{u^n\}$ be sequences of  c\`adl\`ag functions such that
$l^n\leq u^n$, $\{M^n\}$, $\{V^n\}$ be sequences of processes
satisfying \mbox{\rm(Mn)} and $\{(X^n,k^n)\}$ be a sequence of
solutions of $\mbox{\rm(\ref{eq4.9})}$, $n\in\N$.  If $\sup_n
m^n_t<+\infty$, $t\in\Rp$,  $\sup_n E|X^n_0|<+\infty$, $l^n_0\leq
Eh(0,X^n_0)\leq u^n_0$, $n\in\N$, and
\begin{equation}\label{eq4.10}
(X^n_0,M^n,V^n,v^n,l^n,u^n)\arrowd (X_0,M,V,v,l,u) \quad in
\,\,\R\times\Dddddd,\end{equation} where $v^n_t=EV^n_t$,
$v_t=EV_t$, $t\in\Rp$, then
\begin{equation}\label{eq4.15}\{(X^n,k^n,M^n, V^n,v^n,l^n,u^n)\}\quad is
\,\,tight\,\,in\,\,\Dddddddd
\end{equation} and its each limit
point is a weak solution of the  mean reflected SDE
\mbox{\rm(\ref{eq1.1})}.
\end{theorem}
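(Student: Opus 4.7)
The plan is to prove tightness of the sequence and then identify every subsequential limit as a weak solution by combining Theorem \ref{thm2} with a (UT)-based stochastic integral convergence theorem. I first iterate the contraction/stopping-time argument from the proof of Theorem \ref{thm4}, with constants that are uniform in $n$ thanks to $\sup_n m^n_t < \infty$ and $\sup_n E|X^n_0| < \infty$, to obtain
\[
\sup_n E\sup_{t \leq q} |X^n_t| < \infty, \qquad q \in \Rp,
\]
and hence uniform integrability of $\{X^n_t : t \leq q,\, n \in \N\}$, which in turn gives uniform integrability of $\{Y^n_t\}$, where $Y^n = X^n - k^n$.

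For tightness, the uniform bound $\sup_n m^n_t < \infty$ together with (Mn) yields that $\{M^n\}$ and $\{V^n\}$ satisfy Stricker's condition (UT). Combined with (A1), the uniform moment bound, and a (UT)-based tightness criterion for stochastic integrals (see \cite{jmp}), this gives tightness in $\D$ of each integral term defining $Y^n$, hence of $\{Y^n\}$ itself. Writing $y^n = EY^n$, Corollary \ref{cor1} bounds the local modulus of continuity of the deterministic function $k^n$ by that of $(y^n, l^n, u^n)$; as $\{y^n\}$ is tight (by uniform integrability and Proposition \ref{prop4}) and $\{l^n\}, \{u^n\}$ are tight by assumption, so is $\{k^n\}$. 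This yields tightness of the full tuple in $\Dddddddd$, proving \eqref{eq4.15}.

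Next I extract a subsequence along which the tuple, enlarged by $(Y^n, y^n)$, converges in law in the appropriate space, and by Skorokhod's representation work on a common probability space where the convergence is almost sure in $J_1$. The (UT) convergence theorem for stochastic integrals together with (A3) and the almost-sure $J_1$ convergence of $X^n$ identifies the limit as
\[
Y_t = X_0 + \int_0^t f(s, X_{s-})\, dM_s + \int_0^t g(s, X_{s-})\, dV_s, \qquad t\in\Rp.
\]
With the joint convergence $(Y^n, y^n, l^n, u^n) \to (Y, y, l, u)$ in $\DDDD$, Theorem \ref{thm2}(i) applied to $(X^n, k^n) = \mathbb{SP}^{u^n}_{l^n}(h, Y^n)$ identifies the limit $(X, k)$ as $\mathbb{SP}^u_l(h, Y)$, which is precisely the definition of a weak solution of \eqref{eq1.1}.

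The main obstacle is the joint convergence $(Y^n, y^n) \to (Y, y)$ in $\DD$: Example \ref{ex1} shows that $Y^n \arrowd Y$ in $\D$ alone need not imply $y^n \to y$ in $J_1$. However, the martingale property of $M^n$ makes the integral against $M^n$ have zero mean, so jumps of $y^n$ arise only from the deterministic drift of $V^n$, i.e.\ from $v^n$ up to the compensator. The assumption that $v^n \to v$ in $\D$, together with uniform integrability of $\{Y^n_t\}$, should allow verification of the hypotheses of Proposition \ref{prop4} (or Corollary \ref{cor5}) for the sequence $\{Y^n\}$, delivering the required joint convergence and closing the argument.
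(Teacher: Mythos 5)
Your architecture (uniform moment bound, tightness, subsequence extraction, identification via Theorem \ref{thm2} and a stochastic-integral limit theorem, with the $(Y^n,y^n)$ issue resolved through $v^n$ because the martingale part of $Y^n$ has zero mean) is the paper's, and your first and last steps are essentially correct. The genuine gap is the tightness step. You claim that (UT) of $\{M^n\},\{V^n\}$ together with (A1) and the uniform moment bound yields, via a ``(UT)-based tightness criterion'', tightness in $\D$ of $\int_0^\cdot f^n(s,X^n_{s-})\,dM^n_s$ and $\int_0^\cdot g^n(s,X^n_{s-})\,dV^n_s$. No such criterion is available: the results of \cite{jmp,kp,ms} give tightness and convergence of $\int H^n_{-}\,dZ^n$ only when the integrands $H^n$ are tight in the Skorokhod space \emph{jointly} with $Z^n$, or themselves satisfy (UT); mere boundedness in probability of $\sup_{t\le q}|H^n_t|$ gives only boundedness in probability of the integrals (which is what (UT) of the drivers already encodes), not $J_1$-tightness. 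Here the integrand is $f^n(\cdot,X^n_{-})$, so your argument is circular: tightness of $X^n=Y^n+k^n$ is exactly what is to be proved, and $k^n$, being merely c\`adl\`ag and possibly of unbounded variation, need not satisfy (UT), so one cannot bootstrap through the usual SDE machinery. The paper states this obstruction explicitly (``standard tightness criterions from \cite{jmp,kp,ms} \ldots are useless'').

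This is where the proof's real work lies, and it is missing from your proposal: the paper replaces the barriers by piecewise constant approximations $l^{n,(i)},u^{n,(i)}$ of uniformly bounded variation, shows that the corresponding reflection terms $k^{n,(i)}$ have uniformly bounded $2$-variation, deduces that $\bar V_{2+\epsilon}(X^{n,(i)})_q$ is bounded in probability (using the $2+\epsilon$-variation bound \eqref{eq4.13} on $Y^n$, obtained from \cite{kubilius:2009}), approximates $f,g$ by Lipschitz functions $f^{(i)},g^{(i)}$ so that the integrands inherit the $p$-variation bound, and only then invokes the $p$-variation-based tightness result of \cite[Proposition 4.1]{FS} together with (UT) of the drivers. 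That device also delivers \emph{joint} tightness of the tuple in ${D}(\Rp,\R^{7})$, which is strictly stronger than the componentwise tightness your sketch would at best provide, and which is needed both for \eqref{eq4.15} as stated and for your identification step: the (UT) convergence theorem requires joint $J_1$-convergence of $(f(\cdot,X^{n'}_{-}),M^{n'})$, and the jumps of $k^{n'}$ (hence of $X^{n'}$) are not a priori aligned with those of $M^{n'},V^{n'}$, so this joint convergence cannot be taken for granted. Your final paragraph on $(Y^n,y^n)$ does match the paper's argument, but as written the tightness claim and the limit identification are not justified.
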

\begin{proof}
First we show that
\begin{equation}
\label{eq4.11} \sup_nE\sup_{t\leq q}|X^n_t|,\quad q\in\Rp.
\end{equation}
Fix $q\in\Rp$ and set
$t^n_1=\inf\{t>0:(C_h+1)\mu\max(m^n_t,3(m^n_t)^{1/2})>1/2\}\wedge
q$. By (\ref{eqcor1}), for every $t\leq q$,
\begin{eqnarray*}
E\sup_{s< t}|X^n_s|&\leq&E|X^n_0| +(C_h+1)E\sup_{s< t}|Y^n_s-Y^n_0  |\\
&&\qquad+C_h(\lambda_ht+
\sup_{s<t}\max(|l^n_s-l^n_0|,|u^n_s-u^n_0|)\big),
\end{eqnarray*}
where $Y^n=X^n_0+\int_0^\cdot f^n(u,X^n_{u-})\,d M^n_u+\int_0^\cdot
g^n(u,X^n_{u-})\, dV^n_u$, $n\in\N$. Since $\sup_nE|X^n_0|<+\infty$
and $\sup_n\sup_{t\leq q}\max(|l^n_s|,|u^n_s|)<+\infty$, it
follows from  (Mn) and  (A1) that
\begin{eqnarray*}E\sup_{s< t^n_1}|X^n_s|&\leq&C(h,q) +(C_h+1)\max(3(m^n_{t^n_1-})^{1/2},m^n_{t^n_1-})\mu(1+E\sup_{s<t^n_1}|X^n_s|)\\
&\leq& C(h,q) +(C_h+1)\max(3(m^n_{t^n_1-})^{1/2},m^n_{t^n_1-})\mu
+\frac12E\sup_{s<t^n_1}|X^n_s|,
\end{eqnarray*}
and hence that
\[
\sup_nE\sup_{s< t^n_1}|X^n_s|<+\infty.
\] Since  $\Delta
Y^n_{t^n_1}=f^n(t^n_1,X^n_{t^n_1-})\Delta
M^n_{t^n_1}+g(t^n_1,X^n_{t^n_1}-)\Delta V^n_{t^n_1}$, this implies
that   $\sup_nE|\Delta Y^n_{t^n_1}|<+\infty$. Hence also
$\sup_n|\Delta k^n_{t^n_1}|<+\infty$. Consequently,
$\sup_nE\sup_{s\leq t^n_1}|X^n_s|<+\infty$.

Similarly to the proof of Theorem \ref{thm4}  we can repeat the
previously used arguments on the next intervals
$[t^n_k,t^n_{k+1}]$, where
$t^n_{k+1}=t^n_k+\inf\{t>0:(C_h+1)\mu\max(\hat m^n_t,3(\hat
m^n_t)^{1/2})>1/2\}$ and $\hat m^n_t=m^n_{t^n_k+t}-m^n_{t^n_1}$,
$k\in\N$. What is left is to show that
$\sup_nj(n)=\inf\{k:t^n_k=q\}<+\infty$. To  check this set
$r^n(t)= \max(m^n_t,3(m^n_t)^{1/2})$, $r=\sup_n r^n(q)$ and
observe that $r^n_0=0$, $r^n$ is nondecreasing  and  for every
$k$, $\frac12\le (C_h+1)\mu(r(t^n_{k+1})-r(t^n_{k}))$, which
implies that for every $n\in\N$,
\[j(n)\frac12\le(C_h+1)\mu r^n(q)\leq(C_h+1)\mu r,\]
which  completes the proof of (\ref{eq4.11}).

In next step we show (\ref{eq4.15}). By  (\ref{eq4.11}) and (A1),
for every $q\in\Rp$ the sequences $\{\sup_{t\leq
q}|f^n(t,X^n_t)|\}$  and $\{\sup_{t\leq q}|g^n(t,X^n_t)|\}$ are
bounded in probability. Therefore, by (Mn) and \cite[Lemma
1.6]{ms}, $\{Y^n\}$ satisfies (UT). Moreover, for every
$\epsilon>0$,
\begin{equation}\label{eq4.13}
\{\bar V_{2+\epsilon}(Y^{n})_q\}\,\,\mbox{\rm is bounded in
probability.} \end{equation} Indeed, if we set
$\tau^N_n=\inf\{t:\min(|f^n(t,X^n_t)|,|g^n(t,X^n_t)|>N)\}$ for
$n,N\in\N$, then
\[
\lim_{N\to\infty}\limsup_{n\to \infty}P(\tau_n^N<q)=0,\quad
q\in\Rp,
\] and by
the estimate from \cite{kubilius:2009} (see also \cite[Remark
2.4]{FS}),
\begin{eqnarray*}
P(\bar V_{2+\epsilon}(\int_0^\cdot f^n(s,X^{n}_{s-})dM^n_s)_q>K)&\leq& P(\bar V_{2+\epsilon}(\int_0^\cdot f^n(s,X^{n}_{s-})dM^{n,\tau_n^N}_s)_q>K)+P(\tau_n^N<q)\\
&\leq&\frac{E\bar V_{2+\epsilon}(\int_0^\cdot f^n(s,X^{n}_{s-})dM^{n,\tau_n^N}_s)_q}{K}+P(\tau_n^N<q)\\
&\leq&\frac{C(\epsilon)E(\int_0^q |f^n(s,X^{n}_{s-})|^2d\langle M^{n,\tau_n^N}\rangle_s)}{K}+P(\tau_n^N<q)\\
&\leq&\frac{C(\epsilon)N^2 m^{n}_q}{K}+P(\tau_n^N<q),
\end{eqnarray*}
which implies that  $\{\bar V_{2+\epsilon}(\int_0^\cdot
f^n(s,X^{n}_{s-})dM^n_s)_q\}$ is bounded in probability. Since the
similar property for integrals driven by processes with uniformly
bounded variation is straightforward, the proof of (\ref{eq4.13})
is completed.   Unfortunately, the sequence of solutions $\{(X^n,
k^n)=\mathbb{SP}^{u^n}_{l^n}(h,Y^n)\}$ need not satisfy (UT) and
it is not even clear if is possible to approximate them by
sequences satisfying (UT). Consequently, standard tightness
criterions from \cite{jmp, kp, ms} in the case of solutions of
\eqref{eq4.9} are useless.  In the present paper, we use new
tightness results recently proved in \cite[Proposition 4.1]{FS}.
We will show that it is possible to approximate solutions of
\eqref{eq4.9} by processes having uniformly bounded in probability
$2+\epsilon$\,-variation for $\epsilon>0$.

Set $\gamma ^{i}_0=0$, $\gamma ^{i}_{k+1}=
\min(\gamma^{i}_{k}+\delta^i_k, \inf \{ t > \gamma ^{i}_{k}:
\min(|\Delta l_t |, |\Delta u_t|)> \delta^i \})$ and $\gamma
^{ni}_0=0$, $\gamma ^{ni}_{k+1}= \min(\gamma ^{ni}_{k}+\delta^i_k,
\inf \{ t > \gamma ^{ni}_{k}: \min(|\Delta l^n_t |, |\Delta
u^n_t|)> \delta^i \})$, where $\{ \delta^i \}$, $\{ \{ \delta^i_k
\} \}$ are families of positive constants such that $\delta^i
\downarrow 0$, $\delta^i/2 \leq \delta^i_k \leq \delta^i$ $|\Delta
l_t|\neq\delta^i$, $|\Delta u_t|\neq\delta^i$, $t \in \Rp$,
$\Delta l_{\gamma_k^{i}+\delta^i_k} =0$, $\Delta
u_{\gamma_k^{i}+\delta^i_k} =0$. For every $i\in\N$ define new
sequences  $\{l^{n,(i)}\}$, $\{u^{n,(i)}\}$ of functions by
putting $l^{n,(i)}_t=l^n_{\gamma ^{ni}_{k}}$,
$u^{n,(i)}_t=u^n_{\gamma ^{ni}_{k}}$, $t \in [\gamma ^{ni}_{k},
\gamma ^{ni}_{k+1} )$, $k \in \No $, $n\in \N$,
$l^{(i)}_t=l_{\gamma ^{i}_{k}}$, $u^{(i)}_t=u_{\gamma ^{i}_{k}}$,
$t \in [\gamma ^{i}_{k}, \gamma ^{i}_{k+1} )$, $k \in \No $. Then
using the continuous mapping argument, we have
$(l^n,l^{n,(i)},u^n,u^{n,(i)}) \rightarrow (l,l^{(i)},u,u^{(i)})$
in $\DDDD$, which implies that
\begin{equation} \label{ni1} \lim_{i \rightarrow
\infty }\limsup_{n \rightarrow \infty }
 \sup_{t \leq q}\max(|l^{n,(i)}_t-l^n_t|,|u^{n,(i)}_t-u^n_t|) \lra0, \quad  q \in \Rp.
 \end{equation}
 Moreover, for every $i\in\N$,
\begin{equation} \label{ni2}
 \sup_n\max(|l^{n,(i)}|_q, |u^{n,(i)}|_q)<+\infty, \quad  q \in \Rp.
 \end{equation}
Set $\{(X^{n,(i)},k^{n,(i)})
=\mathbb{SP}^{u^{n,(i)}}_{l^{n,(i)}}(h,Y^n)\}$, $n,i\in\N$, and
observe that by (\ref{ni1})  and Proposition  \ref{prop1},
\begin{equation}\label{eq4.12} \lim_{i \rightarrow \infty }\limsup_{n
\rightarrow \infty } E\sup_{t \leq q}|X^{n,(i)}_t-X^n_t| =0,\quad
  q\in\Rp.\end{equation}
On the other hand, by simple  calculations,  for all $q\in\Rp$,
$i\in\N$ there exists $C(h,q,i)>0$ such that for every $n\in\N$,
\begin{eqnarray*}
v_2(k^{n,(i)})_q&=&\sup_\pi \sum_{j=1}^m
|k_{t_j}-k_{t_{i-j}}|^2\\&\leq& 45 C_h^2\mu^2(1+E\sup_{t\leq
q}|X^n_t|)m_q+5 C_h^2\mu^2(1+E\sup_{t\leq
q}|X^n_t|)(m_q)^2\\
&&\qquad+5C_h^2\lambda_h^2(q^2+|l^{n,(i)}|_q^2+|u^{n,(i)}|_q^2)\leq
C(h,q,i).
\end{eqnarray*}
Since
 \[\bar V_{2+\epsilon}(X^{n,(i)})_q\leq \bar V_{2+\epsilon}(Y^{n})_q+(C(h,q,i)^{1/2},\]
 it follows  from (\ref{eq4.13}) that  also the sequence
 $\{\bar V_{2+\epsilon}(X^{n,(i)})_q\}$ is
bounded in probability.
 Furthermore,  it is well
known that for continuous $f: \Rp\times\R \lra \R $  one can
construct  a sequence $\{ f^{(i)} \}$  of  functions such that
$f^{(i)} $ satisfies (A2), for $i\in\N$  and  $||f^{(i)} -
f||_{[0,q]\times K} \lra 0$
 for any compact subset $K \subset \R$ and $q\in\Rp$. Similarly
one can construct a sequence of   functions $\{ g^{(i)} \}$
 satisfying  (A2) for  $i\in\N$ and approximating $g$.
Since $f^{(i)},g^{(i)}  $ satisfy (A2), $\{X^{n,(i)}\}$ has
bounded $2+\epsilon$\,-variation and $\{ f^{(i)}(X^{n,(i)}) \}$,
$\{ g^{(i)}(X^{n,(i)}) \}$ have bounded $2+\epsilon$\,-variation
as well. Using \cite[Proposition 4.1]{FS}  and the fact that the
sequences $\{M^n\}$ and $\{V^n\}$ satisfy  (UT)  shows that
\[
\{(X^n_0,M^n, \int_0^\cdot
f^{(i)}(s,X^{n,(i)}_{s-})\,dM^n_s, V^n,\int_0^\cdot
g^{(i)}(s,X^{n,(i)}_{s-})\,dV^n_s,v^n,l^n,u^n)\}
\]
 is tight in  $\R\times\Dddddddd$. Therefore, by (\ref{eq4.12}), also
\[\{(X^n_0,M^n, \int_0^\cdot
f^n(s,X^{n}_{s-})\,dM^n_s, V^n,\int_0^\cdot
g^n(s,X^{n}_{s-})\,dV^n_s,v^n,l^n,u^n)\}
\]
is tight in $\R\times\Dddddddd$,
which together with Theorem \ref{thm2} implies (\ref{eq4.15}).

Assume that there exists a subsequence $(n') \subset (n )$ such
that
\begin{equation}\label{eq4.14}
(X^{n'},k^{n'},M^{n'}, V^{n'},v^{n'},l^{n'},u^{n'})
\arrowd(\bar X,\bar k,\bar M,\bar V,v,l,u)
\quad\,\,\mbox{\rm in}\,\,\Dddddddd,
\end{equation}
where ${\cal L}(\bar X^n_0,\bar M,\bar V)={\cal L}(\bar X_0,M,V)$.
Then, by \cite[Proposition 4.1(ii)]{FS},
\[(X^{n'},k^{n'},M^{n'}, V^{n'},Y^{n'},l^{n'},u^{n'})\arrowd(\bar X,\bar k,\bar M,\bar V,\bar Y,l,u)\quad\,\,\mbox{\rm in}\,\,\Ddddddddd.\]
Now set $y^{n'}_t=EY^{n'}_t=E\int_0^t
g(s,X^{n'}_{s-})\,dV^{n'}_s$, $\bar y_t=E\bar Y_t=E\int_0^t
g(s,\bar X_{s-})\,d\bar V_s$, $t\in\Rp$. Since $\{t:\Delta \bar
y_t\neq0\}\subset\{t:\Delta v_t\neq0\}$, from the above
convergence and (\ref{eq4.10}) we deduce that
\begin{equation}
\label{4.16}(X^{n'},k^{n'},M^{n'},
V^{n'},Y^{n'},y^{n'},l^{n'},u^{n'}) \arrowd(\bar X,\bar k,\bar
M,\bar V,\bar Y,\bar y,l,u)\quad\,\, \mbox{\rm in}\,\,\Ddddddddd.
\end{equation}
Therefore from Theorem \ref{thm3}(i) it follows that $(\bar X,
\bar k)=\mathbb{SP}^{u}_{l}(h,\bar Y)\}$ with $\bar Y$ defined as
$\bar Y = \bar X_0 + \int_0^{\cdot}\;f(s,\bar X_{s-})\,d\bar
M_s+\int_0^{\cdot}\;g(s,\bar X_{s-})\,d\bar V_s$, which implies
that $(\bar X,\bar k)$ is a  weak solution of the SDE
(\ref{eq1.1}).
\end{proof}

In the proof of existence of weak  solutions  of (\ref{eq1.1}) we
will use the approximation  scheme defined in (\ref{eq4.2}).

\begin{corollary}\label{cor4.1}Assume \mbox{\rm(H)},
\mbox{\rm(A1)} and \mbox{\rm (M)}. If $l_0\leq Eh(0,X_0)\leq u_0$,
then there exists  a weak solution $(X,k)$ of
\eqref{eq1.1}.
\end{corollary}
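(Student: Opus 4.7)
The plan is to reduce the corollary to Theorem \ref{thm6} by approximating the merely continuous coefficients $f,g$ with Lipschitz ones, for which Theorem \ref{thm4} already provides strong solutions. The tightness and passage to the limit are then automatic from Theorem \ref{thm6}.

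\textbf{Step 1: Lipschitz approximation of the coefficients.} By standard mollification in the spatial variable (combined with a suitable truncation to preserve the linear growth constant $\mu$), one constructs sequences $\{f^{(n)}\}, \{g^{(n)}\}$ of continuous functions $\Rp\times\R\to\R$ such that each $f^{(n)},g^{(n)}$ satisfies both (A1) (with the same constant $\mu$) and (A2) (with some constant depending on $n$), and such that
\[
\|f^{(n)}-f\|_{[0,q]\times K}+\|g^{(n)}-g\|_{[0,q]\times K}\lra 0
\]
for every $q\in\Rp$ and every compact $K\subset\R$. In other words, (A3) holds.

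\textbf{Step 2: Strong solutions for the approximations.} Since $f^{(n)},g^{(n)}$ satisfy (A1), (A2) and $M,V$ satisfy (M), and since the initial condition is unchanged so that $l_0\leq Eh(0,X_0)\leq u_0$, Theorem \ref{thm4} yields, on the original probability space $(\Omega,\F,(\F_t),P)$, a unique strong solution $(X^{(n)},k^{(n)})$ of the mean reflected SDE
\[
X^{(n)}_t=X_0+\int_0^t f^{(n)}(s,X^{(n)}_{s-})\,dM_s+\int_0^t g^{(n)}(s,X^{(n)}_{s-})\,dV_s+k^{(n)}_t,\quad t\in\Rp,
\]
with $E\sup_{t\leq q}|X^{(n)}_t|<+\infty$ for every $q\in\Rp$.

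\textbf{Step 3: Tightness and passage to the limit via Theorem \ref{thm6}.} Apply Theorem \ref{thm6} with the constant sequences $M^n=M$, $V^n=V$, $l^n=l$, $u^n=u$, $X^n_0=X_0$ and with the approximating coefficients $f^n=f^{(n)}$, $g^n=g^{(n)}$. Condition (Mn) holds with $m^n=m$, so $\sup_n m^n_t=m_t<+\infty$; moreover $\sup_nE|X^n_0|=E|X_0|<+\infty$ and $l^n_0\leq Eh(0,X^n_0)\leq u^n_0$ holds by hypothesis. The joint convergence (\ref{eq4.10}) is trivial since every sequence is constant, and (A3) was verified in Step 1. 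Therefore Theorem \ref{thm6} gives that $\{(X^{(n)},k^{(n)},M,V,v,l,u)\}$ is tight in $\Dddddddd$ and each of its limit points is a weak solution of (\ref{eq1.1}). Extracting a weakly convergent subsequence via Prokhorov's theorem produces the desired weak solution $(\bar X,\bar k)$ on a suitable probability space carrying copies of $M,V$ with the same law.

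The only genuine work is Step 1, i.e.\ building Lipschitz approximations that retain the same linear growth constant so that the bound $\sup_n m^n_t=m_t<+\infty$ in Theorem \ref{thm6} can be used uniformly; this is a routine mollification plus truncation argument and the rest of the proof is just a bookkeeping application of the previously established theorems.
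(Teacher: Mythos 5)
Your argument is correct, but it takes a genuinely different route from the paper. The paper keeps the coefficients $f,g$ fixed and instead discretizes the driving data: it takes $M^n,V^n,l^n,u^n$ to be time-discretizations of $M,V,l,u$, defines $(X^n,k^n)$ by the explicit Euler-type recursion \eqref{eq4.2} (so existence of the approximating solutions is immediate from the recursion, with no fixed-point argument and no Lipschitz hypothesis), checks that the discretized $\langle M^n\rangle$ and $\widetilde{|V^n|}$ are dominated by $m^n_t=m_{\rho^n_t}$ so that (Mn) holds with $\sup_n m^n_t\le m_t$, and then applies Theorem \ref{thm6} with $f^n=f$, $g^n=g$ (so (A3) is trivial). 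You instead keep $M,V,l,u$ fixed and perturb the coefficients, using Theorem \ref{thm4} to produce strong solutions for the Lipschitz approximations $f^{(n)},g^{(n)}$ and then feeding these into Theorem \ref{thm6} with constant driving data. Both reductions are legitimate, since Theorem \ref{thm6} allows either the driving terms or the coefficients (or both) to vary. What the paper's route buys is constructiveness and economy: the approximating pairs exist by fiat and no coefficient approximation is needed at the level of the corollary. What your route buys is that the convergence \eqref{eq4.10} and condition (Mn) become trivial; the price is that all the work is pushed into Step 1, and there you should be careful on two points: plain spatial mollification of a continuous function of linear growth is only \emph{locally} Lipschitz (the derivative bound grows with $|x|$), so the truncation you mention is genuinely necessary and not cosmetic; and the linear growth constant of $f^{(n)},g^{(n)}$ must be bounded uniformly in $n$ (not merely "some constant for each $n$"), because the localization times $t^n_1$ in the proof of Theorem \ref{thm6} are defined through $\mu$ and the a priori bound \eqref{eq4.11} uses it uniformly. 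This is exactly the approximation the paper itself invokes inside the proof of Theorem \ref{thm6} and declares well known, so your proposal is consistent with the paper's toolkit, just deployed at a different stage.
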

\begin{proof}
Let $\{M^n\},\{V^n\}$ and $\{l^n\},\{u^n\}$ be sequences of
discretizations  of $M,V$  and $l,u$.  Clearly
\begin{equation}\label{eq411}(M^n,V^n,v^n,l^n,u^n)
\lra (M,V,v,l,u)\quad \Ppw\,\,\mbox{\rm
in}\,\,\DDDDD.
\end{equation}  If $(X^n, k^n)$ and
$Y^n$ are defined by \eqref{eq4.2}, then $(X^n,
k^n)=\SPp{Y^n}{l^n}{u^n}$, where $Y^n_t=X_0+\int_0^{{t}}f(s,
X^n_{s-})\,dM^n_s+\int_0^{{t}}g(s,X^n_{s-})\,dV^n_s$, $t\in\Rp$.
This means that $(X^n, k^n)$, $n\in\N$, are solutions  of
(\ref{eq4.9}) with $f^n=f$  and $g^n=g$. Now observe that using
(M) we are able to estimate the predictable characteristics of
$M^n$ and $V^n$. Indeed, for every $t\in\Rp$,
\begin{eqnarray*}
\langle M^n\rangle_t&=&\sum_{k;k/n\leq t}E(M_{k/n}-M_{(k-1)/n})^2|{\cal F}_{(k-1)/n})\\
 &=&\sum_{k;k/n\leq t}E(\langle M\rangle^{k/n}_{(k-1)/n}|{\cal F}_{(k-1)/n})\leq m^n_t
 \end{eqnarray*} and\begin{eqnarray*}
\widetilde{|V^n|}_t &=&\sum_{k;k/n\leq t}E(|V_{k/n}-V_{(k-1)/n}||{\cal F}_{(k-1)/n})\\
 &=&\sum_{k;k/n\leq t}E(\widetilde{|V|}^{k/n}_{(k-1)/n}|{\cal F}_{(k-1)/n})\leq m^n_t,
 \end{eqnarray*}
where $m^n$ is a discretization of the function $m$, i.e.
$m^n_t=m_{\rho^n_t}$, $t\in\Rp$. Consequently, the condition (Mn)
is satisfied  and the existence of a weak solution  follows from
Theorem \ref{thm6}.
\end{proof}

\begin{corollary}\label{cor4.2} Under the assumptions of
Theorem \ref{thm6}, if moreover \mbox{\rm(A2)} is satisfied and
the processes $M,V$ have independent increments,
\[
(X^{n},k^{n},M^{n}, V^{n},l^{n},u^{n})\arrowd(X,k,M,V,l,u)
\quad\,\,\mbox{\rm in}\,\,\Ddddddd,
\]
where $(X,k)$ is the unique weak solution of
\mbox{\rm(\ref{eq1.1})}.
\end{corollary}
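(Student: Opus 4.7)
The plan is to combine the tightness statement of Theorem \ref{thm6} with weak uniqueness of \eqref{eq1.1} under the Lipschitz condition (A2). Tightness together with uniqueness of every subsequential weak limit immediately yields convergence in law of the full sequence.

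First I would apply Theorem \ref{thm6}, whose hypotheses are part of the standing assumptions in the corollary. This gives that $\{(X^n,k^n,M^n,V^n,v^n,l^n,u^n)\}$ is tight in $\Dddddddd$ and that every subsequential weak limit has the form $(\bar X,\bar k,\bar M,\bar V,\bar v,l,u)$, where $\bar v_t=E\bar V_t$ and $(\bar X,\bar k)$ is a weak solution of \eqref{eq1.1} driven by $\bar M,\bar V$ on some filtered probability space $(\bar\Omega,\bar\F,\{\bar\F_t\},\bar P)$ with $\mathcal{L}(\bar X_0,\bar M,\bar V)=\mathcal{L}(X_0,M,V)$. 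Because $M,V$ have independent increments, so do $\bar M,\bar V$, hence (M) transfers to the new space with the same deterministic bound $m$ (cf.\ Example \ref{ex2}); in particular all hypotheses of Theorem \ref{thm4} are in force on $(\bar\Omega,\bar\F,\{\bar\F_t\},\bar P)$.

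Next I would establish weak uniqueness. Under (H), (A1), (A2), (M), Theorem \ref{thm4} gives a unique strong solution of \eqref{eq1.1} on any filtered probability space carrying the driving data; applied on $(\bar\Omega,\bar\F,\{\bar\F_t\},\bar P)$ with data $(\bar X_0,\bar M,\bar V)$ it produces a unique pair $(\bar X',\bar k')$. Since $(\bar X,\bar k)$ also satisfies \eqref{eq1.1} on this space with the same driving data, and the compensating function $k$ is deterministic and depends only on the law of $X$, pathwise uniqueness forces $(\bar X,\bar k)=(\bar X',\bar k')$. Consequently $\mathcal{L}(\bar X,\bar k,\bar M,\bar V,\bar v,l,u)$ is entirely determined by $\mathcal{L}(X_0,M,V,v,l,u)$ and coincides with $\mathcal{L}(X,k,M,V,v,l,u)$, where $(X,k)$ is the unique strong solution on the original space.

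Since all subsequential weak limits share this common law, tightness upgrades convergence along subsequences to convergence in law of the full sequence, and dropping the $v^n$-coordinate yields the claim. The main obstacle is the weak-uniqueness step, i.e.\ arguing that pathwise uniqueness on the original space transfers to uniqueness in law when the probability space is allowed to vary. This is a Yamada--Watanabe-type argument and is particularly clean in our setting because $k$ is deterministic and the minimality condition depends only on the law of $X$, so Theorem \ref{thm4} applies verbatim on any filtered probability space carrying driving data of the prescribed form.
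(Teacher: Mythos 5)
Your overall architecture matches the paper's: apply Theorem \ref{thm6} to get tightness and to identify every subsequential limit as a weak solution, then conclude convergence of the whole sequence from weak uniqueness. The problem is the weak-uniqueness step itself, which is exactly the content of the corollary's proof and which you do not actually carry out. From pathwise uniqueness on a \emph{fixed} space (Theorem \ref{thm4}) you write ``Consequently $\mathcal{L}(\bar X,\bar k,\bar M,\bar V,\bar v,l,u)$ is entirely determined by $\mathcal{L}(X_0,M,V,v,l,u)$'' --- but that implication is precisely what has to be proved, not a consequence of pathwise uniqueness. Pathwise uniqueness tells you there is one solution per space; it does not by itself tell you that solutions built on two different spaces from identically distributed data $(\bar X_0,\bar M,\bar V)$ and $(\bar X_0',\bar M',\bar V')$ have the same law. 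You name Yamada--Watanabe as the bridge, but the classical Yamada--Watanabe theorem is not available verbatim here: the equation is of McKean--Vlasov type (the reflection term is a functional of the law of $X$ through $Eh(t,X_t)$ and $H^{-1}$), so the standard coupling via regular conditional distributions would itself need justification. As written, the key step is asserted rather than proved.

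The paper closes this gap constructively, and the fix is available to you from results you already cite. Given two weak solutions with ${\cal L}(\bar{X}_0,\bar M,\bar V)={\cal L}(\bar{X}'_0,\bar{M}',\bar{V}')={\cal L}(X_0,M,V)$, independence of increments forces $\bar M,\bar V$ and $\bar M',\bar V'$ to have the same deterministic characteristics, so (M) holds on both spaces with the same $m$ and Theorem \ref{thm5} applies on each. The Euler iterates \eqref{eq4.2} are, by induction on $k$, explicit measurable functionals of $(\bar X_0,\bar M,\bar V)$ and the deterministic data $h,l^n,u^n$ (the quantities $EY^n_{(k+1)/n}$, $\bar l^n$, $\bar u^n$, $k^n$ entering the recursion are determined by the law of the data), so the two approximation sequences have equal laws; since by Theorem \ref{thm5} they converge in probability to the respective solutions, the limits have equal laws. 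If you prefer your route, the same determinism argument can be run on the Picard iterates from the proof of Theorem \ref{thm4} instead of the Euler scheme; either way, some explicit ``solution as limit of measurable functionals of the data'' argument is needed, and merely invoking a Yamada--Watanabe-type principle does not supply it.
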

\begin{proof}
It is sufficient to apply Theorem  \ref{thm6} and observe that in
the case where  \mbox{\rm(A2)} is satisfied and $M,V$ have
independent increments then   weak uniqueness for (\ref{eq1.1})
holds true.  To check this assume that $(\bar{X},\bar k,\bar
M,\bar V)$ and $(\bar{X}',\bar k',\bar M',\bar V')$  are two weak
solutions of (\ref{eq1.1})  and  ${\cal L}(\bar{X}_0,\bar M,\bar
V)=$ ${\cal L}(\bar{X}'_0,\bar{ M}', \bar{V}')$ $={\cal
L}({X}_0,M, V)$.  Since $M,V$ have independent increments, the
processes $\bar M,\bar V$  and $\bar{M}',\bar{V}'$ also have
independent increments.  Moreover, they have the same
deterministic predictable  characteristics $\langle
M\rangle,\widetilde{|V|}$. Let $(\bar X^n,
k^n)=\mathbb{SP}^{u^n}_{l^n}(h,\bar Y^n)$, $n\in\N$, and
$(\bar{X^n}', k^n)=\mathbb{SP}^{u^n}_{l^n}(h,\bar{Y^n}')$,
$n\in\N$, be the approximations of $(\bar{X},\bar k)$ and
$(\bar{X}',\bar k')$, respectively, considered in Theorem
\ref{thm5}.  By Theorem \ref{thm5},
$
(\bar X^n,k^n,\bar Y^n,l^n,u^n)\arrowp (\bar X,\bar k,\bar Y,l,u)$ in $\Dddddd$
and
$(\bar{ X^n}',k^n,\bar {Y^n}',l^n,u^n)\arrowp (\bar{ X}',\bar k',\bar Y,l,u)$ in $\Dddddd$.
Clearly, the  equality of laws ${\cal L}(\bar{X}_0,\bar M,\bar
V)=$ ${\cal L}(\bar{X}'_0,\bar{ M}', \bar{V}')$ implies the
equality of laws of the approximations, i.e. ${\cal L}(\bar
X^n,k^n,\bar Y^n,l^n,u^n)={\cal L}(\bar {X^n}',k^n,\bar
{Y^n}',l^n,u^n)$, $n\in\N$, which completes the proof.
\end{proof}

\begin{remark}
{\rm From  Proposition \ref{prop4} one can deduce that in case the
processes $\{V^n\}$ have independent increments or $P(\Delta
V_t=0)=1$, $t\in\Rp$, in Theorem \ref{thm6}  and   Corollary
\ref{cor4.2} in place of (\ref{eq4.10})  it suffices to assume
that
\[
(X^n_0,M^n,V^n,l^n,u^n)\lra (X_0,M,V,l,u) \quad \mbox{\rm in} \,\,\R\times\Ddddd.\]
}\end{remark}

\section{Appendix.  Skorokhod problem, deterministic case}

We begin with a general definition of  the  Skorokhod problem with
time-dependent reflecting barriers for c\`adl\`ag functions
considered in \cite{BKR}.
\begin{definition}{\rm  Let $y,l,u\in\D$ with
\label{def5} $l\leq u$ and $l_0\leq y_0\leq u_0$. We say that
a~pair $(x,k)\in\DD$ with $k_0=0$ is a solution of the  Skorokhod
problem associated with $y$ and  barriers $l,u$ (and we write $(x,
k)=SP_l^u(y)$)  if
\begin{description}
\item[(i)]$ x_t = y_t+k_t\in[l_t,u_t]$, $t\in\Rp$,
\item[(ii)]for every $0\leq t\leq q$,
\begin{eqnarray*}
k_q-k_t\geq0,&&\quad\mbox{\rm if}\,\,x_s<u_s\,\,\mbox{\rm for
all}\,\,s\in(t,q],\\
k_q-k_t\leq0,&&\quad\mbox{\rm if}\,\,x_s>l_s\,\,\mbox{\rm for
all}\,\,s\in(t,q],\end{eqnarray*}and for every $t\in\Rp$, $ \Delta
k_t\geq0$ if $x_t<u_t$ and $\Delta k_t\leq0$ if $x_t>l_t$.
\end{description}
}
\end{definition}
\begin{theorem}(\cite[Theorem 2.6]{BKR}) \label{thm5.1}
There exists a unique solution  of the  Skorokhod problem
$(x,k)=SP_l^u(y)$. Moreover,
\begin{equation}\label{eq5.1}
k_t=-\max(0\wedge\inf_{0\leq u\leq t}(y_u-l_u),
\sup_{0\leq s\leq t}[(y_s-u_s)\wedge\inf_{s\leq u\leq t}(y_u-l_u)]),
\quad t\in\Rp.
\end{equation}
\end{theorem}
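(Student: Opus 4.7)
The plan is to establish existence by showing that the explicit formula (\ref{eq5.1}) defines a valid solution, and uniqueness by a comparison argument based on the minimality condition (ii).

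For existence, I would first verify that $k$ defined by (\ref{eq5.1}) lies in $\D$ with $k_0 = 0$. This follows since $t \mapsto \inf_{0\leq u\leq t}(y_u - l_u)$ is right-continuous and nonincreasing, and the further nested $\sup$, $\wedge$, and $\max$ operations preserve the càdlàg property. Setting $x = y + k$, the constraint $l_t \leq x_t \leq u_t$ is obtained by a case analysis on which term realizes the outer $\max$: the inner infimum term ensures $x_t \geq l_t$ by compensating upward whenever $y$ drops below the lower barrier, while the $(y_s - u_s) \wedge \inf_{s\leq u\leq t}(y_u - l_u)$ term inside the outer supremum compensates downward whenever $y$ rises above $u$, but is truncated by the inner infimum to avoid overshooting below $l$.

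The minimality condition (ii) is verified next. For $0 \leq t \leq q$ with $x_s < u_s$ on $(t, q]$, the condition $y_s - u_s < -k_s$ forces the quantity $(y_s - u_s) \wedge \inf_{s \leq u \leq t}(y_u - l_u)$ to be strictly smaller than $-k_s$ for $s \in (t, q]$; consequently the outer supremum in (\ref{eq5.1}) over $[0, q]$ coincides with that over $[0, t]$ up to a contribution from the inner infimum, which corresponds precisely to a nondecreasing increment of $k$ on $(t, q]$. The symmetric statement for the lower barrier, and the jump versions of both, follow by the same reasoning applied at jump times.

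For uniqueness, suppose $(x^1, k^1)$ and $(x^2, k^2)$ are two solutions and consider $\delta_t = k^1_t - k^2_t$ on $[0, q]$. Assuming $\delta$ attains a strictly positive maximum at some $t^* \in [0, q]$, we have $x^1_{t^*} > x^2_{t^*} \geq l_{t^*}$, hence by right-continuity $x^1_s > l_s$ on a right-neighborhood of $t^*$, so by condition (ii) applied to $k^1$ we get $k^1_s \leq k^1_{t^*}$ there. Symmetrically, $x^2_{t^*} < u_{t^*}$ yields $k^2_s \geq k^2_{t^*}$, so $\delta$ strictly decreases just after $t^*$, contradicting (combined with an analogous left-sided argument exploiting left limits and jumps) the maximality; since $\delta_0 = 0$, we conclude $\delta \equiv 0$, i.e.\ $k^1 = k^2$ and hence $x^1 = x^2$.

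\emph{Main obstacle.} The principal technical difficulty is the careful handling of càdlàg jumps in $y, l, u$, both in checking that the explicit formula produces correct jump behaviour and in ruling out jump configurations that might undermine the uniqueness comparison. The jump formula (\ref{eq_kwzor}) of Remark \ref{rem_dwie} and the equivalent minimality reformulation in condition (iii') are the natural tools for streamlining these case analyses, allowing the two-barrier Skorokhod problem to be reduced locally to pairs of one-sided reflections at individual jump times.
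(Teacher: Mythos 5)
The paper itself contains no proof of this theorem: it is quoted directly from Burdzy, Kang and Ramanan \cite[Theorem 2.6]{BKR}, so there is no internal argument to compare against. Your overall plan --- establish existence by verifying that the explicit formula \eqref{eq5.1} produces a pair satisfying (i) and (ii), and establish uniqueness by a comparison argument driven by the minimality condition --- is exactly the route taken in that reference (and in the related work of Kruk, Lehoczky, Ramanan and Shreve \cite{KLRS}), so there is no methodological divergence to report.

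That said, your uniqueness step has a genuine gap as written. You assume $\delta=k^1-k^2$ attains a strictly positive maximum at some $t^*$ and conclude that $\delta$ decreasing (in fact you only obtain non-increase) just after $t^*$ contradicts maximality. It does not: a c\`adl\`ag function need not attain its supremum, and even when it does, non-increase to the right of a maximiser is perfectly consistent with that point being a maximum. The correct argument runs backwards rather than forwards: pick $t_0$ with $\delta_{t_0}>0$, set $s_0=\sup\{t\le t_0:\delta_t\le 0\ \text{or}\ \delta_{t-}\le 0\}$ (finite since $\delta_0=0$), note that $\delta>0$ on $(s_0,t_0]$ forces $x^1_s>x^2_s\ge l_s$ and $x^2_s<x^1_s\le u_s$ there, so condition (ii) yields $k^1_{t_0}-k^1_{s_0}\le 0$ and $k^2_{t_0}-k^2_{s_0}\ge 0$, i.e.\ $\delta_{t_0}\le\delta_{s_0}$; the jump clause of (ii) applied at $s_0$ then gives $\Delta\delta_{s_0}\le 0$, whence $\delta_{t_0}\le\delta_{s_0-}\le 0$, the desired contradiction. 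Your closing remark about "an analogous left-sided argument" gestures at this but does not supply it. On the existence side, the assertion that $l_t\le x_t\le u_t$ and condition (ii) follow "by a case analysis on which term realizes the outer $\max$" is where essentially all of the work of \cite{BKR} lies (including the case where $\inf(u-l)=0$, which is permitted here and is the reason $k$ need not have bounded variation); as it stands this is a statement of intent rather than a proof. Finally, note that Remark \ref{rem_dwie} and condition (iii') are formulated for the mean-reflected problem of Section 2 and are themselves derived from Theorem \ref{thm5.1}; to avoid circularity you should invoke only the deterministic variational formulation recalled after \eqref{eq5.4}.
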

One can  also show Lipschitz continuity  of the mapping
$(y,l,u)\mapsto(x,k)$ in supremum norm.
\begin{theorem}(\cite[Theorem 2.6]{sl-wo/13}) \label{thm5.2} If
$(x^i,k^i)$ is a solution associated with ${y^i}\in\D$ and
barriers $l^i,u^i$, $i=1,2$, then for every $q\in\Rp$,
\begin{equation}\label{eq5.2}\sup_{t\leq
q}|x^1_t-x^2_t|\leq2\sup_{t\leq q}|y^1_t-y^2_t| +\sup_{t\leq
q}\max(|l^1_t-l^2_t|,|u^1_t-u^2_t|)\end{equation} and
\begin{equation}\label{eq5.3}
 \sup_{t\leq q}|k^1_t-k^2_t|\leq\sup_{t\leq q}|y^1_t-y^2_t|
+\sup_{t\leq q}\max(|l^1_t-l^2_t|,|u^1_t-u^2_t|).\end{equation}
\end{theorem}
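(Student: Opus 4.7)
The plan is to read off the Lipschitz estimate (\ref{eq5.3}) directly from the explicit formula (\ref{eq5.1}) for the reflection part $k$ and then deduce (\ref{eq5.2}) from $x^i=y^i+k^i$. The key observation is that (\ref{eq5.1}) represents $-k$ as a composition of $1$-Lipschitz operations in the supremum norm, namely the pointwise operations $\max$ and $\wedge$ on $\R^2$ (which are $1$-Lipschitz in the $\ell^\infty$-norm, i.e.\ $|a\wedge b-a'\wedge b'|\leq\max(|a-a'|,|b-b'|)$, and similarly for $\max$) composed with $\sup$ and $\inf$ over intervals, which are themselves $1$-Lipschitz in uniform norm.

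Concretely, for $i=1,2$ I would set $a^i_t:=y^i_t-l^i_t$ and $b^i_t:=y^i_t-u^i_t$, so that (\ref{eq5.1}) reads $-k^i_t=\max(A^i_t,B^i_t)$ with $A^i_t=0\wedge\inf_{0\leq u\leq t}a^i_u$ and $B^i_t=\sup_{0\leq s\leq t}[\,b^i_s\wedge\inf_{s\leq u\leq t}a^i_u\,]$. The triangle inequality yields
\[
\sup_{u\leq t}|a^1_u-a^2_u|\leq\sup_{u\leq t}|y^1_u-y^2_u|+\sup_{u\leq t}|l^1_u-l^2_u|
\]
and analogously for $b$. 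Applying $1$-Lipschitzness of $\inf$ and of $\wedge$ pointwise gives $|A^1_t-A^2_t|\leq\sup_{u\leq t}|a^1_u-a^2_u|$. For $B^i$ I would fix $s$, control the inner infimum by $|\inf_{s\leq u\leq t}a^1_u-\inf_{s\leq u\leq t}a^2_u|\leq\sup_{u\leq t}|a^1_u-a^2_u|$ (a bound uniform in $s$), and then apply the $\ell^\infty$-Lipschitz property of $\wedge$ to obtain, for every $s$,
\[
\Bigl|b^1_s\wedge\inf_{s\leq u\leq t}a^1_u-b^2_s\wedge\inf_{s\leq u\leq t}a^2_u\Bigr|\leq\max\bigl(\sup_{u\leq t}|a^1_u-a^2_u|,\,\sup_{u\leq t}|b^1_u-b^2_u|\bigr).
\]
Taking $\sup_{s\leq t}$ preserves the bound (the right-hand side does not depend on $s$) and gives the same control on $|B^1_t-B^2_t|$. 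Combining the two through $|\max(A^1,B^1)-\max(A^2,B^2)|\leq\max(|A^1-A^2|,|B^1-B^2|)$ and substituting the triangle bounds on $a$ and $b$ produces exactly (\ref{eq5.3}). Then (\ref{eq5.2}) follows at once from $|x^1_t-x^2_t|\leq|y^1_t-y^2_t|+|k^1_t-k^2_t|$ and (\ref{eq5.3}), the extra $\sup|y^1-y^2|$ accounting for the factor~$2$.

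The only step that is not entirely routine is the nested infimum in $B^i_t$; what makes it harmless is that the inner interval $[s,t]$ is the same for both problems $i=1,2$, so a single uniform bound on $|a^1_u-a^2_u|$ over $[0,t]$ controls all inner infima simultaneously and no extra factor accumulates when we pass to $\sup_{s\leq t}$. No other difficulty arises.
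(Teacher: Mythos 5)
Your argument is correct. Note first that the paper itself gives no proof of this statement: Theorem \ref{thm5.2} is imported verbatim from \cite[Theorem 2.6]{sl-wo/13}, so there is nothing internal to compare against. What you supply is a legitimate self-contained derivation of \eqref{eq5.3} from the explicit representation \eqref{eq5.1} of Theorem \ref{thm5.1}: writing $-k^i_t=\max(A^i_t,B^i_t)$ with $A^i_t=0\wedge\inf_{u\le t}(y^i_u-l^i_u)$ and $B^i_t=\sup_{s\le t}[(y^i_s-u^i_s)\wedge\inf_{s\le u\le t}(y^i_u-l^i_u)]$, and propagating the $1$-Lipschitz property (in the $\ell^\infty$/uniform norm) of $\max$, $\wedge$, $\sup$ and $\inf$ through the composition, does yield $|k^1_t-k^2_t|\le\max\bigl(\sup_{u\le t}|a^1_u-a^2_u|,\sup_{u\le t}|b^1_u-b^2_u|\bigr)$, and the triangle inequality on $a=y-l$, $b=y-u$ turns this into exactly the right-hand side of \eqref{eq5.3} (using $\max(\sup f,\sup g)=\sup\max(f,g)$). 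Your handling of the nested infimum is the one point that needed care, and your observation that the bound $\sup_{u\le t}|a^1_u-a^2_u|$ is uniform in the left endpoint $s$ is precisely what makes it go through without loss. The passage to \eqref{eq5.2} via $x^i=y^i+k^i$ is immediate and explains the factor $2$. The only caveat worth recording is that your proof is conditional on Theorem \ref{thm5.1} (every solution has the form \eqref{eq5.1}), which is also quoted from the literature; granting that, your derivation is complete and is the standard way such Lipschitz estimates for the two-barrier Skorokhod map are obtained.
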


From (\ref{eq5.2}), (\ref{eq5.3}) it is  easy to deduce stability
of the mapping $(y,l,u)\mapsto(x,k)$ in the Skorokhod topology
$J_1$. More precisely, if $(y^n,l^n,u^n)\lra(y,l,u)$ in $\DDD$,
then
\begin{equation}\label{eq5.4}
 (x^n,k^n,y^n,l^n,u^n)\lra(x,k,y,l,u)\quad in\,\, \DDDDD.\end{equation}
It is worth adding that in   \cite[Definition 2.1]{sl-wo/13}
condition (ii)   is replaced by
\begin{itemize}
\item for every $0\leq t\leq q$ such that
$\inf_{s\in[t,q]}(u_s-l_s)>0$ the function $k$ has bounded
variation on $[t,q]$  and
\[\int_{[t,q]}(x_s-l_s)\,dk_s\leq0\quad\mbox{\rm
and}\quad\int_{[t,q]}(x_s-u_s)\,dk_s\leq0.\]
\end{itemize}
However, simple calculations shows that in fact these two
definitions are equivalent.

 In the classical Skorokhod problem it is assumed that
the function $k$ has bounded variation on  any  bounded interval
$[0,q]$, or, equivalently,  $k=k^{(+)}-k^{(-)}$,
 where $k^{(+)}$, $k^{(-)}$ are nondecreasing right continuous
functions with $k_0=k^{(+)}_0=k^{(-)}_0=0$ such that $k^{(+)}$
increases only on $\{t:x_t={l}_t\}$ and  $k^{(-)}$ increases only
on $\{t:x_t={u}_t\}$. In \cite{rutkowski:1980,ChL} and \cite{DN}
it is observed that the above property holds true in the case
where the barriers $l,u\in\D$ satisfy the additional  condition
\begin{equation}\label{eq5.5}
\inf_{t\leq q}({u}_t-{l}_t)>0,\quad q\in\Rp.
\end{equation}
Under (\ref{eq5.5}) it is possible to estimate the variation of
the function $k$ compensating the reflections  with the use of
$\eta$-oscillations of $y,l,u$.

\begin{proposition}(\cite[Proposition 2.11]{sl-wo/10})
\label{prop5}For any $q\in\Rp$ and $\eta$ such that
$0<2\eta\leq\inf_{t\leq q}({u}_t-{l}_t)/3$ we have
\begin{equation}\label{eq5.6}|k|_q\leq
6(N_{\eta}(y,q)+N_{\eta}(l,q)+N_{\eta}(u,q)+1)(\sup_{ t\leq q}|y_t|+\sup_{ t\leq
q}\max(|l_t|,|u_t|)).\end{equation}
\end{proposition}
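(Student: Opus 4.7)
The plan is to reduce the bound on $|k|_q$ to a counting argument on the number of times the state $x$ switches between touching the lower and upper barriers. Under the gap condition $\inf_{t \le q}(u_t - l_t) \ge 6\eta$, the results recalled in the paragraphs preceding the proposition (in particular, condition \eqref{eq5.5}) guarantee that $k$ is of bounded variation on $[0,q]$ and admits the decomposition $k = k^{(+)} - k^{(-)}$ with $k^{(+)}, k^{(-)}$ nondecreasing, $k^{(+)}$ growing only on $\{t : x_t = l_t\}$ and $k^{(-)}$ only on $\{t : x_t = u_t\}$.

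First I would record the uniform bound $|k_t| = |x_t - y_t| \le \sup_{s \le q}\max(|l_s|,|u_s|) + \sup_{s \le q}|y_s| =: M$ for $t \le q$, which follows from $l_t \le x_t \le u_t$; in particular $|k_s - k_t| \le 2M$ for all $s,t \in [0,q]$. Next I would construct an alternating sequence of times $0 = \tau_0 < \tau_1 < \cdots < \tau_m \le q$ inductively: $\tau_1$ is the first time $x$ touches one of the barriers, and given $\tau_i$, if $x_{\tau_i} = l_{\tau_i}$ (resp.\ $x_{\tau_i} = u_{\tau_i}$) then $\tau_{i+1} = \inf\{t > \tau_i : x_t = u_t\}$ (resp.\ $\tau_{i+1} = \inf\{t > \tau_i : x_t = l_t\}$), truncated by $q$ if no such instant exists. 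On each subinterval $[\tau_i,\tau_{i+1}]$ the state $x$ touches at most one of the barriers, so only one of $k^{(+)}, k^{(-)}$ grows there; hence $k$ is monotone on $[\tau_i,\tau_{i+1}]$ and $|k_{\tau_{i+1}} - k_{\tau_i}| \le 2M$. Summing yields $|k|_q \le 2M(m+1)$.

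The heart of the argument is then the estimate $m \le 3(N_\eta(y,q) + N_\eta(l,q) + N_\eta(u,q)) + 2$. On each interval $[\tau_i,\tau_{i+1}]$ the state $x$ swings from one barrier to the opposite one, so the net motion $|x_{\tau_{i+1}} - x_{\tau_i}|$ is at least $\inf_{[\tau_i,\tau_{i+1}]}(u - l) \ge 6\eta$ up to the oscillations of $l$ and $u$ on the interval. Using $x = y + k$ together with the monotonicity of $k$ on each piece, I would argue that a block of three consecutive subintervals on which none of $y$, $l$, $u$ exhibits an $\eta$-oscillation contradicts the forced barrier-to-barrier excursions of $x$ and the alternating monotonicity of $k$. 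A pigeonhole argument then attributes at most three switching times $\tau_i$ to each $\eta$-oscillation of the data $y$, $l$, or $u$.

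The main obstacle is precisely this counting step: careful bookkeeping is required at the switching instants $\tau_i$, where $y$, $l$, or $u$ may jump and thereby force $x$ from one barrier to the other instantaneously without a strict interior oscillation of the data. This is exactly what dictates the factor $3$ in $m \le 3(N_\eta + 1)$, while the additive $+1$ absorbs the boundary contributions at $0$ and $q$. Combining with $|k|_q \le 2M(m+1)$ yields the advertised bound $|k|_q \le 6(N_\eta(y,q) + N_\eta(l,q) + N_\eta(u,q) + 1)M$.
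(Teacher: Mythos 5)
The paper offers no proof of this proposition; it is quoted verbatim from \cite[Proposition 2.11]{sl-wo/10}, so the only meaningful comparison is with the standard argument behind that reference, which is indeed the one you outline: bound $|k_t|\le M:=\sup_{s\le q}|y_s|+\sup_{s\le q}\max(|l_s|,|u_s|)$ via $l_t\le x_t\le u_t$, split $[0,q]$ at the times where $x$ switches from one barrier to the other, use that $k$ is monotone between consecutive switches to bound the variation of each piece by $2M$, and control the number $m$ of switches by oscillation counts. Your skeleton is the right one. Two bookkeeping points you gloss over are real but repairable: on the \emph{closed} interval $[\tau_i,\tau_{i+1}]$ the path $x$ does touch both barriers (it meets the opposite one at $\tau_{i+1}$), so $k$ is monotone only on $[\tau_i,\tau_{i+1})$; the jump $\Delta k_{\tau_{i+1}}$ has the sign of the \emph{next} monotone piece (since $x_{\tau_{i+1}}=u_{\tau_{i+1}}>l_{\tau_{i+1}}$ forces $\Delta k_{\tau_{i+1}}\le 0$) and must be absorbed there for $|k|_q\le 2M(m+1)$ to come out.

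The genuine gap is the counting step, which you state but do not prove, and the mechanism you invoke for it is not the right one. Monotonicity of $k$ on an excursion cannot transfer the displacement of $x$ to an oscillation of $y$: going from $l$ up to $u$, the function $k$ is nondecreasing, i.e. it pushes $x$ in the \emph{same} direction as the crossing, so a priori $y$ need not move at all. What makes the argument work is the support property you record in your first paragraph but never use, namely that $dk^{(+)}$ charges only $\{x=l\}$. If $\sigma$ denotes the last instant in $[\tau_i,\tau_{i+1})$ at which $x$ touches $l$, then $k$ is constant on $(\sigma,\tau_{i+1})$ and $\Delta k_{\tau_{i+1}}\le 0$, whence $u_{\tau_{i+1}}=x_{\tau_{i+1}}\le x_\sigma+(y_{\tau_{i+1}}-y_\sigma)=l_\sigma+(y_{\tau_{i+1}}-y_\sigma)$; if neither $l$ nor $u$ has an $\eta$-oscillation on $[\tau_i,\tau_{i+1}]$, this forces $y_{\tau_{i+1}}-y_\sigma\ge u_{\tau_{i+1}}-l_{\tau_{i+1}}-\eta\ge 5\eta>\eta$. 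Thus every completed excursion interval contains an $\eta$-oscillation of $y$, $l$ or $u$, and since consecutive intervals meet only at endpoints these witnesses form an admissible increasing chain in the definition of $N_\eta$, so $m$ is bounded by $N_\eta(y,q)+N_\eta(l,q)+N_\eta(u,q)$ plus a bounded number of boundary terms --- comfortably enough for the constant $6$, and in fact sharper than the $m\le 3N+2$ you aim for. Without some version of this ``after the last contact with $l$, the path $y$ alone must bridge the gap'' argument, your assertion that three consecutive oscillation-free subintervals lead to a contradiction is unsupported, and it is precisely the step the whole proposition rests on.
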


We close this section with simple remark showing that in the
special case of the Skorokhod problems with one barrier, i.e. when
$u=+\infty$ or $l=-\infty$, the function $k$ has much simpler
form. Indeed, in  case of one lower barrier $l$ one can check that
$k$ is a nondecreasing function of the form
\begin{equation}\label{eq5.7}
k_t=\sup_{s\leq t}(y_s-l_s)^-=\sup_{s\leq t}(l_s-y_s)^+,\quad
t\in\Rp.
\end{equation}
Similarly, in case of one upper barrier $u $ one can check that
$k$  is a nonincreasing function of the form
\begin{equation}\label{eq5.8}
k_t=-\sup_{s\leq t}(y_s-u_s)^+=-\sup_{s\leq t}(u_s-y_s)^-,\quad
t\in\Rp.\end{equation}

\noindent{\bf Acknowledgements}\\
{This work was supported by the Polish National Science Centre
under Grant  \\ 2016/23/B/ST1/01543).}

 \bibliographystyle{plain}

\end{document}